\numberwithin{equation}{section}
\numberwithin{figure}{section}
\newtheorem{theorem}{Theorem}[section]
\newtheorem{proposition}[theorem]{Proposition}
\newtheorem{corollary}[theorem]{Corollary}
\newtheorem{lemma}[theorem]{Lemma}
\newtheorem*{problem*}{Problem}
\newtheorem*{theorem*}{Theorem}
\newtheorem*{proposition*}{Proposition}
\newtheorem*{corollary*}{Corollary}
\newtheorem*{lemma*}{Lemma}
\newtheorem*{claim*}{Claim}
\newtheorem*{solution*}{Solution}
\theoremstyle{definition}
\newtheorem{notation}[theorem]{Notation}
\newtheorem{definition}[theorem]{Definition}
\newtheorem{remark}[theorem]{Remark}
\newtheorem*{remark*}{Remark}
\newtheorem*{summary*}{Summary}
\newtheorem*{example*}{Example}
\newcommand{\R}{{\mathbb R}}
\newcommand{\Q}{{\mathbb Q}}
\newcommand{\N}{{\mathbb N}}
\newcommand{\C}{{\mathbb C}}
\newcommand{\Z}{{\mathbb Z}}
\renewcommand{\P}{{\mathbb P}}
\renewcommand{\arraystretch}{1.2}
\DeclarePairedDelimiter\ceil{\lceil}{\rceil}
\DeclarePairedDelimiter\floor{\lfloor}{\rfloor}
\newcommand{\Oc}{{\mathbb O}}
\newcommand{\T}{{\mathbb T}}
\newcommand{\I}{{\mathbb I}}
\newcommand{\D}{{\mathbb D}}
\newcommand{\SU}{\text{SU}}
\newcommand{\SO}{\text{SO}}
\newcommand{\Sp}{\text{Sp}}
\newcommand{\FS}{\text{FS}}
\newcommand{\CZ}{\operatorname{CZ}}
\newcommand{\fp}{\mathfrak{p}}
\newcommand{\fP}{\mathfrak{P}}
\newcommand{\tb}{\textcolor{blue}}
\newcommand{\wind}{\operatorname{wind}}
\newcommand{\ind}{\operatorname{ind}}
\newcommand{\M}{\mathcal{M}}
\definecolor{pink}{RGB}{255,105,180}
\definecolor{indigo}{RGB}{51,0,102}
\definecolor{brightpurple}{RGB}{102,0,153}
\definecolor{fuchsia}{RGB}{180,51,180}
\definecolor{jolightpurple}{RGB}{188,171,240}
\title{A contact McKay correspondence for links of simple singularities}
\author{Leo Digiosia and Jo Nelson}
\date{}
\begin{document}

\maketitle

\begin{abstract}
We compute the cylindrical contact homology of the links of the simple singularities. These manifolds are contactomorphic to $S^3/G$ for finite subgroups $G\subset\SU(2)$. We perturb the degenerate contact form on $S^3/G$ with a Morse function, which is invariant under the corresponding $H\subset\SO(3)$ action on $S^2$, to achieve nondegeneracy up to an action threshold. The cylindrical contact homology is recovered by taking a direct limit of the action filtered homology groups. The ranks of this homology are given in terms of $|\text{Conj}(G)|$, demonstrating a Floer theoretic McKay correspondence.
\end{abstract}
\tableofcontents

\section{Introduction}
A simple singularity is modeled by the isolated singular point of the variety $\C^2/G$, for a finite nontrivial subgroup $G\subset\SU(2)$. The action of $G$ on $\C[u,v]$ admits an invariant subring, generated by three monomials, $m_i(u,v)$ for $i=1,2,3$, that satisfy a minimal polynomial relation,
 \[f_G(m_1(u,v),m_2(u,v),m_3(u,v))=0,\] 
for some nonzero $f_G\in\C[z_1,z_2,z_3]$. These weighted polynomials $f_G$ provide an alternative perspective of the simple singularities as hypersurface singularities in $\C^3$. Specifically, the map 
\[\C^2/G\to V_G:=f^{-1}_G(0),\,\,\,\,\,[(u,v)]\mapsto(m_1(u,v),m_2(u,v),m_3(u,v))\] 
defines an isomorphism of complex varieties, $\C^2/G\simeq V_G$, and produces a hypersurface singularity given any finite nontrivial $G\subset \SU(2)$.  The following table summarizing the relationship of $G$ to $f_G$. The integer triple $(p,q,r)$  corresponds to the lengths of the 3 branches of the associated Dynkin diagram denoted by $\Gamma(G)$.  In the $A_n$ case, $(k,l)$  is an arbitrary pair of positive integers satisfying $k + l = n +1$. 

\begin{table}[h!]
\centering
\begin{tabular}{ | c | c | c | c | c | c | } 
\hline Group $G$ & Graph $\Gamma(G)$ & \ \ \ \ \ $f_G(z_1,z_2,z_3)$ \ \ \ \ \ & branches $(p,q,r)$ \\ 
\hline $\Z_{n+1}$ & $A_n$ & $z_1^{n+1} + z_2^2+z_3^2$ & $(1, k, l)$ \\ 
\hline $\D^*_{2n-4}$ & $D_n$ & $z_1^2z_2 + z_1^{n-1} + z_3^2$ & $(2, 2, n-2)$ \\ 
\hline $\T^*$ &$E_6$ & $z_1^4 + z_2^3 + z_3^2$ & $(2, 3, 3)$ \\ 
\hline $\mathbb{O}^*$ &$E_7$ & $z_1^3z_1 + z_2^3 + z_3^2$ & $(2, 3, 4)$ \\
\hline $\I^*$ &$E_8$ & $z_1^5 + z_2^3 + z_3^2$ & $(2, 3, 5)$ \\
\hline 
\end{tabular}
\caption{Polynomial relation $f_G$ for finite subgroups $G\subset\SU(2)$. }
\label{table: diagrams of graphs}
\end{table}

   One can recover the conjugacy class of $G$ from $V_G$ by studying the Dynkin diagram associated to the minimal resolution $\widetilde{X}_G$ of a simple singularity $\mathbf{0}$ of $V_G$, using the McKay correspondence \cite{mckay} summarized below.   The \emph{Dynkin diagram} associated to (the minimal resolution of) $(V_G, \mathbf{0} )$ is the finite graph whose vertex $v_i$ is labeled by the exceptional holomorphic sphere $Z_i$ of self-intersection -2, and $v_i$ is adjacent to $v_j$ if and only if $Z_i$ transversely intersects with $Z_j$. In this way, we associate to any simple singularity $(V_G,\mathbf{0})$ the graph $\Gamma(V_G,\mathbf{0})$. It is a classical fact that $\Gamma(V_G,\mathbf{0})$ is isomorphic to one of the $A_n$, $D_n$, or the $E_6$, $E_7$, or $E_8$ graphs (see \cite[\S 6]{Sl}), depicted in Figure \ref{figure: dynkin diagrams}. 

 \begin{figure}[h]
    \centering
    \captionsetup{justification=centering}
    \includegraphics[width=0.5\textwidth]{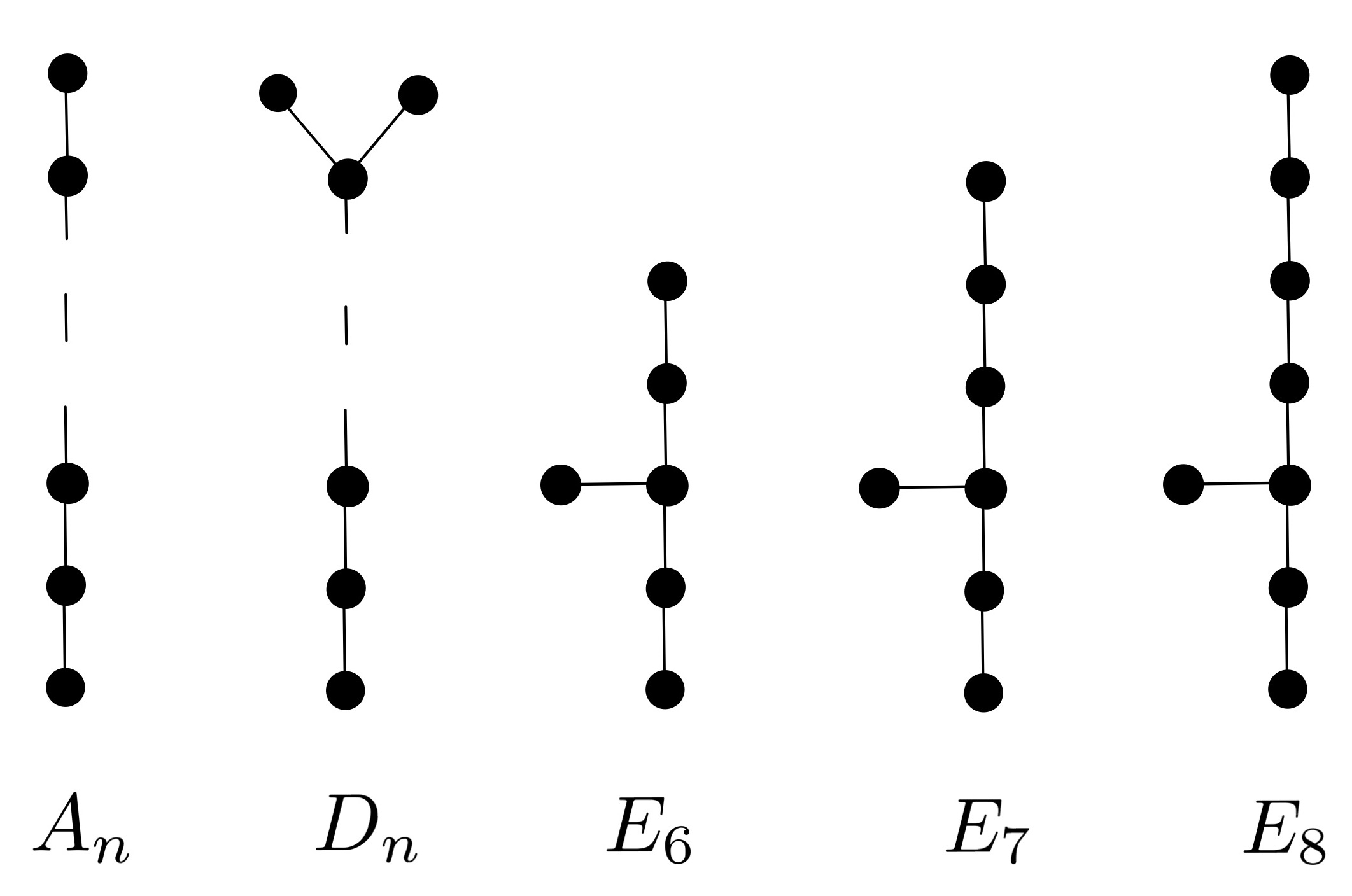}
    \caption{The Dykin diagrams; $A_n$ and $D_n$ feature $n$ nodes.}
    \label{figure: dynkin diagrams}
\end{figure}

The Dynkin diagrams also simultaneously classify the types of conjugacy classes of finite subgroups $G$ of $\SU(2)$.  Any finite subgroup $G\subset\SU(2)$ must be either cyclic, conjugate to $\D_{2n}^*$, or is a binary polyhedral group, cf. \cite[\S 1.6]{Z}. Associated to each type of finite subgroup $G\subset\SU(2)$ is a finite graph, $\Gamma(G)$. The vertices of $\Gamma(G)$ are in correspondence with the nontrivial irreducible representations $V_i$ of $G$, of which there are $|\text{Conj}(G)|-1$, where $\text{Conj}(G)$ denotes the set of conjugacy classes 
of a group $G$.   The McKay correspondence states that $\Gamma(G)$ is isomorphic to one of the $A_n$, $D_n$, or the $E_6$, $E_7$, or $E_8$ graphs, as enumerated in Table \ref{table: diagrams of graphs}.  The adjacency matrix $A_{ij}$ of the Dynkin diagram determines the tensor products $\C^2 \otimes V_i \cong \oplus A_{ij} V_j$ with the canonical representation, cf. \cite{St}.  We also note that the dimension of the cohomology of the minimal resolution is precisely the number of irreducible representations.

\begin{table}[h!]
\centering
 \begin{tabular}{||m{2cm} | m{2.5cm} | m{1.5cm} ||} 
 \hline
$G\subset\SU(2)$  & $|\text{Conj}(G)|-1$ & $\Gamma(G)$  \\ [0.5ex] 
 \hline\hline
 $\Z_n$ & $n-1$ & $A_{n-1}$  \\ 
 \hline
 $\D_{2n}^*$ & $n+2$ & $D_{n+2}$\\
 \hline
  $\T^*$ & $6$ & $E_6$\\
 \hline
   $\Oc^*$ & $7$ & $E_7$\\
 \hline
    $\I^*$ & $8$ & $E_8$\\
 \hline
\end{tabular}
\caption{Dykin diagrams associated to finite subgroups $G\subset\SU(2)$. }
\label{table: diagrams of graphs}
\end{table}



We adapt a method of computing the cylindrical contact homology of $(S^3/G,\xi_G)$ as a direct limit of action filtered homology groups, described by Nelson in \cite{N2}. This process uses a (lift of a) Morse function, which is invariant under the corresponding symmetry group in $\SO(3)$, to perturb the standard degenerate contact form.  In order to define the exact symplectic cobordism maps necessary to take direct limits, a detailed analysis of the homotopy classes of Reeb orbits is needed due to the presence of contractible and torsion Reeb orbits. 

Our computation realizes a contact Floer theoretic McKay correspondence result, namely that the ranks of the cylindrical contact homology of the links\footnote{ Recall that the \emph{link} of a hypersurface singularity in $\C^3$ is the 3-dimensional contact manifold $L:=S^5_{\epsilon}(0)\cap \{f^{-1}(0)\}$, with contact structure $\xi_L:=TL\cap J_{\C^3}(TL)$, where $J_{\C^3}$ is the standard integrable complex structure on $\C^3$, and $\epsilon>0$ is small. There is a contactomorphism $(S^3/G,\xi_G)\simeq(L,\xi_L)$, where $\xi_G$ on $S^3/G$ is the descent of the standard contact structure $\xi$ on $S^3$ to the quotient by the $G$-action. 
} of simple singularities are given in terms of the number of conjugacy classes of the group $G$.  It additionally recovers the presentation of the manifold as a Seifert fiber space and, in this sense, provides a natural basis for the cylindrical contact homology in terms of the Reeb orbits realizing the different conjugacy classes of $G$, cf. Remark \ref{rem:SFS}.

 We expect that our explicit description of the cylindrical chain complexes will enable computations of embedded contact homology and its associated spectral invariants after an appropriate adaption of arguments from \cite{NW, NW2}.  Such results will be of interest in the context of gauge theory as well as have applications to the study of symplectic embeddings and fillings.   Our computations realize McLean and Ritter's work, which computes the positive $S^1$-equivariant symplectic cohomology of the crepant resolution $Y$ of $\C^n/G$ in terms of the number of conjugacy classes of the finite $G\subset\SU(n)$, \cite[Theorem 1.10, Corollary 2.13]{MR}, without needing to know the cohomology of the minimal resolution.   




\subsection{Definitions and overview of cylindrical contact homology}
First we recall some basic definitions.  Let $(Y,\xi)$ be a closed contact three manifold with defining contact form $\lambda$. This contact form determines a smooth vector field, $R_{\lambda}$, called the \emph{Reeb vector field}, which uniquely satisfies $\lambda(R_{\lambda})=1$ and $d\lambda(R_{\lambda},\cdot)=0$. A $\emph{Reeb orbit}$ $\gamma$ is a map  $\R/T\Z\to M$, considered up to reparametrization, with $\dot{\gamma}(t)=R_{\lambda}(\gamma(t))$. Let $\mathcal{P}(\lambda)$ denote the set of Reeb orbits of $\lambda$. If $\gamma\in\mathcal{P}(\lambda)$ and $k\in\N$, then the $k$-fold iterate of $\gamma$, denoted $\gamma^k$, is the precomposition of $\gamma$ with $\R/kT\Z\to\R/T\Z$. The orbit $\gamma$ is \emph{embedded} when $\R/T\Z\to Y$ is injective. If $\gamma$ is the $m$-fold iterate of an embedded Reeb orbit, then $m(\gamma):=m$ is the \emph{multiplicity} of $\gamma$.

For a Reeb orbit $\gamma$ as above, the time $T$ linearized Reeb flow defines a symplectic linear map
\[
P_{\gamma}: \left( \xi_{\gamma(0)},d\lambda \right) \to \left( \xi_{\gamma(0)},d\lambda \right), 
\]
after making a choice of trivialization, which we also denote by $P_\gamma$.  We say $\gamma$ is \emph{nondegenerate} if $P_\gamma$ does not have 1 as an eigenvalue.  The contact form  $\lambda$ is called \emph{nondegenerate} if all $\gamma\in\mathcal{P}(\lambda)$ are nondegenerate.   A nondegenerate Reeb orbit is said to be \emph{elliptic} if $P_\gamma$ has its eigenvalues on the unit circle and hyperbolic if $P_\gamma$ has real eigenvalues.   (If both real eigenvalues are positive then $\gamma$ is a \emph{positive hyperbolic orbit} and if both real eigenvalues are negative then $\gamma$ is a \emph{negative hyperbolic orbit}.)

If $\tau$ is a homotopy class of trivializations of $\xi|_\gamma$, then the \emph{Conley Zehnder index}, $\mu_{\CZ}^{\tau}(\gamma)\in\Z$ is defined and related to the rotation of the Reeb flow along $\gamma$.  The parity of the Conley-Zehnder index does not depend on the choice of trivialization and is even when $\gamma$ is positive hyperbolic and odd when $\gamma$ is elliptic.  If $\gamma$ is an embedded negative hyperbolic orbit then the parity of the Conley-Zehnder index is odd for all odd iterates and even for all even iterates, with respect to any homotopy class of trivializations.
 An orbit $\gamma\in\mathcal{P}(\lambda)$ is said to be \emph{bad} if it is an even iterate of a negative hyperbolic orbit, otherwise, $\gamma$ is said to be \emph{good}. 
 Let $\mathcal{P}_{\text{good}}(\lambda)\subset\mathcal{P}(\lambda)$ denote the set of good Reeb orbits. 

If $\langle c_1(\xi), \pi_2(Y)\rangle=0$ and if $\mu_{\CZ}^{\tau}(\gamma)\geq3$ for all contractible  $\gamma\in\mathcal{P}(\lambda)$ with any $\tau$ extendable over a disc, we say the nondegenerate contact form $\lambda$ is \emph{dynamically convex}. The symplectic vector bundle $(\xi, d\lambda)$ admits a global trivialization if $c_1(\xi)=0$, which is unique up to homotopy if $\mbox{rank }H_1(Y)=0$. In this case, the integral \emph{grading} $|\gamma|$ of the generator $\gamma$ is defined to be $\mu_{\CZ}^{\tau}(\gamma)-1$ for any $\tau$ induced by a global trivialization of $\xi$. 
\begin{definition}
We say that an almost complex structure $J$ on $\R\times Y$ is {\em $\lambda$-compatible\/} if 
\begin{itemize}
\itemsep-.35em
\item $J(\xi)=\xi$; 
\item $d\lambda(v,Jv)>0$ for nonzero $v\in\xi$; 
\item $J$ is invariant under translation of the $\R$ factor; 
\item $J(\partial_s)=R_\lambda$, where $s$ denotes the $\R$ coordinate.   
\end{itemize}
We denote the set of all $\lambda$-compatible $J$ by $\mathcal{J}(Y,\lambda)$.  \end{definition}

Fix such a $\lambda$-compatible $J$. If $\gamma_+$ and $\gamma_-$ are Reeb orbits, we consider \emph{$J$-holomorphic cylinders} interpolating between them, which are smooth maps  $u: \R \times S^1 \to \R \times Y$ such that the nonlinear Cauchy-Riemann equation holds
\[
\partial_s u + J\partial_t u =0,
\]
 $\lim_{s \to \pm \infty} \pi_{\R}\circ u(s,t) = \pm \infty $, and $\lim_{s \to \pm \infty}\pi_Y u(s,\cdot)$ is a parametrization of $\gamma_\pm$.  Here $\pi_\R$ and $\pi_Y$ are the respective projections from $\R \times Y$ to $\R$ and $Y$.  We say that $u$ is positively asymptotic to $\gamma_+$ and negatively asymptotic to $\gamma_-$.  We declare two maps to be equivalent if they differ by translation and rotation of the domain $\R \times S^1$, and denote the set of equivalence classes by $\mathcal{M}^J(\gamma_+,\gamma_-)$.  There is an additional $\R$ action $\mathcal{M}^J(\gamma_+,\gamma_-)$ by translation of the $\R$ factor on the target $\R \times Y$.
 
We define the \emph{Fredholm index} of a cylinder $u\in \mathcal{M}^J(\gamma_+,\gamma_-)$ by
 \[\text{ind}(u)=\mu_{\CZ}^{\tau}(\gamma_+)-\mu_{\CZ}^{\tau}(\gamma_-)+2c_1(u^*\xi,\tau),\]
after fixing a trivialization $\tau$ of $\xi$ over $\gamma_+$ and $\gamma_-$.  The relative first Chern class  $c_1(u^*\xi,\tau)$ vanishes when $\tau$ extends to a trivialization of $u^*\xi$.   For  $k\in\Z$, $\mathcal{M}_k^J(\gamma_+,\gamma_-)$ denotes those cylinders with $\text{ind}(u)=k$. The significance of the Fredholm index is that  if $J$ is generic and $u\in \mathcal{M}_k^J(\gamma_+,\gamma_-)$ is somewhere injective, then  $\mathcal{M}^J_{k}(\gamma_+,\gamma_-)$ is naturally a manifold near $u$ of dimension $k$.


For a nondegenerate contact form $\lambda$, and under favorable transversality conditions, we define the cylindrical contact homology chain complex $CC_*(Y,\lambda,J)$ over $\Q$ as follows.  (The original definition is due to Eliashberg-Givental-Hofer \cite{EGH} and we are using notation from \cite{HN2}, but suppressing some decorations as we only consider one cylindrical flavor of contact homology in this paper.)  As a module, $CC_*(Y,\lambda,J)$ is noncanonically isomorphic to the vector space over $\Q$ generateed by good Reeb orbits; an isomorphism is fixed after a choice of coherent orientations, which is used to define a $\Z$-module $\mathcal{O}_\gamma$ that is noncanonically isomorphic to $\Z$, cf. \cite[A.3]{HN2}.  We then define
\[
CC_*(Y,\lambda, J) = \bigoplus_{\gamma \in \mathcal{P}_{\text{good}}(\lambda)} \mathcal{O}_\gamma \otimes_\Z \Q.
\]   
The choice of a generator of $\mathcal{O}_\gamma$ for each good Reeb orbit specifies an isomorphism
\[
CC_*(Y,\lambda, J) \simeq \Q\langle\mathcal{P}_{\text{good}}(\lambda)\rangle.
\]
This chain complex admits a canonical $\Z/2$-grading determined by the mod 2 Conley-Zehnder index, which can be upgraded to a relative or absolute $\Z$ grading in certain circumstances.  In the setting of this paper, we have an absolute $\Z$ grading given by
\[
|\gamma| = \mu_{\CZ}^\tau(\gamma) - 1,
\]
where $\tau$ is any homotopy class of the global unitary trivialization constructed in \eqref{equation: global trivialization} and Remark \ref{rem:triv}.

To define the differential, we first define the following operator assuming that all moduli spaces $\mathcal{M}_k^J(\alpha,\beta)$ with Fredholm index $k \leq 1$ are cut out transversely:
\[
\delta: CC_*(Y,\lambda, J) \to  CC_{*-1}(Y,\lambda, J),
\]
given by
\[
\delta \alpha = \sum_{\beta \in \mathcal{P}_{\text{good}}(\lambda)} \sum_{u \in \mathcal{M}_1^J(\alpha,\beta)/\R} \frac{\epsilon(u)}{d(u)} \beta.
\]
Here $\epsilon(u)$ is an element of $\{ \pm 1 \}$ after generators of $\mathcal{O}_\alpha$ and  $\mathcal{O}_\beta$ have been chosen, cf. \cite[Def. A.26]{HN2}, and $d(u) \in \Z_{>0}$ is the covering multiplicity of $u$, which is 1 if and only if $u$ is somewhere injective.  

Next we define an operator
\[
\kappa : CC_*(Y,\lambda, J) \to  CC_{*}(Y,\lambda, J)
\]
by
\[
\kappa(\alpha) = d(\alpha)\alpha.
\]
Under suitable transversality assumptions for $\mathcal{M}_2^J(\alpha,\beta)$, then counting their ends yields
\begin{equation}\label{dkd}
\delta \kappa \delta =0.
\end{equation}
This was proven in the dynamically convex case in \cite{HN} and recovered in arbitrary odd dimensions in the absence of contractible Reeb orbits in \cite{HN2}.  As a result of \eqref{dkd}, we obtain that
\[
\partial := \delta \kappa
\]
is a differential on $CC_{*}(Y,\lambda, J)$.  The differential preserves the free homotopy class of Reeb orbits because they count cylinders which project to homotopies in $Y$ between Reeb orbits.

 Under additional hypotheses, this homology is independent of contact form $\lambda$ defining $\xi$ and generic $J$ (for example, if $\lambda$ admits no contractible Reeb orbits, \cite[Corollary 1.10]{HN2}), and is denoted $CH_*(Y,\xi)$. This is the \emph{cylindrical contact homology} of $(Y,\xi)$. Upcoming work of Hutchings and Nelson will show that $CH_*(Y,\xi)=CH_*(Y,\lambda,J)$ is independent of dynamically convex $\lambda$ and generic $J$.

\subsection{Main result and connections to other work}
The link of the $A_n$ singularity is shown to be contactomorphic to the lens space $L(n+1,n)$ in \cite[Theorem 1.8]{AHNS}. More generally, 
  the links of simple singularities $(L,\xi_L)$ are shown to be contactomorphic to quotients $(S^3/G,\xi_G)$ in \cite[Theorem 5.3]{N3}.  Theorem \ref{theorem: main} computes the cylindrical contact homology of $(S^3/G,\xi_G)$ as a direct limit of  filtered homology groups.

\begin{theorem} \label{theorem: main}
    Let $G\subset\mbox{\em SU}(2)$ be a finite nontrivial group, and let $m=|\mbox{\em Conj}(G)|\in\N$ be the number of conjugacy classes of $G$. The cylindrical contact homology of $(S^3/G, \xi_G)$ is \[CH_*(S^3/G,\lambda_G, J):=\varinjlim_N CH_*^{L_N}(S^3/G,\lambda_N, J_N)\cong\bigoplus_{i\geq0}\Q^{m-2}[2i]\oplus\bigoplus_{i\geq0} H_*(S^2;\Q)[2i].\]
    \end{theorem}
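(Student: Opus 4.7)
The plan is to follow the perturbative scheme of Nelson \cite{N2} and Haney-Mark \cite{HM}: the degenerate contact form $\lambda_G$ has Reeb flow descending from the Hopf flow on $S^3$, so its orbits foliate $S^3/G$ as a Seifert fibration $\pi : S^3/G \to S^2/H$, where $H \subset \SO(3)$ is the image of $G$ under $\SU(2) \to \SO(3)$. I would fix an $H$-invariant Morse function $f : S^2 \to \R$ whose descent to the orbifold $S^2/H$ is Morse-Smale with critical points at every orbifold singularity together with a generic pair of max and min, and perturb by setting $\lambda_\epsilon := (1 + \epsilon\, \pi^* f)\lambda_G$. For $\epsilon > 0$ sufficiently small depending on a prescribed action threshold $L_N$, the perturbed form $\lambda_N := \lambda_\epsilon$ is nondegenerate up to action $L_N$, and the good Reeb orbits below $L_N$ are in bijection with pairs $(p, k)$ where $p$ is a critical point of $f$ on $S^2/H$ and $k$ is a positive integer bounded in terms of $L_N$ and the isotropy order $e_p$ at $p$.

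Next I would compute Conley-Zehnder indices using the Morse-Bott formula adapted to Seifert fibrations: the CZ index of the orbit corresponding to $(p, k)$ is a sum of a rotation term coming from the linearized Hopf return map along the fiber over $p$ and a Morse contribution depending on the index of $f/H$ at $p$. An ADE case-by-case analysis (distinguishing whether $-I \in G$, since this affects whether the generic Hopf fiber descends with period $\pi$ or $2\pi$) would verify that every good orbit has odd $\mu_{\CZ}$, so every generator of $CC_*^{L_N}$ sits in even grading $|\gamma| = \mu_{\CZ}(\gamma) - 1$. Bad orbits, arising as even iterates of negative-hyperbolic orbits over saddle critical points, are excluded from the chain complex by definition. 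Parity then forces $\partial = 0$, giving $CH_*^{L_N}(S^3/G, \lambda_N, J_N) \cong CC_*^{L_N}$ as graded $\Q$-vector spaces.

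It remains to enumerate generators and take the direct limit. The contributions of the orbits over the two generic max/min critical points, together with all their iterates, assemble into the factor $\bigoplus_{i \geq 0} H_*(S^2;\Q)[2i]$, in that each iteration block realizes the rank-$2$ Morse complex of $S^2$ shifted up by $2$. The contributions of the orbits over the orbifold singular points add the remaining $\bigoplus_{i \geq 0} \Q^{m-2}[2i]$, and the identification of the total rank at each positive even degree with $m = |\text{Conj}(G)|$ is the McKay correspondence in this setting, matching the number of vertices of the affine ADE Dynkin diagram attached to $G$. Finally, the direct limit $N \to \infty$ commutes with the grading, and the Hutchings-Nelson independence result announced in the introduction identifies the limit with $CH_*(S^3/G, \xi_G)$.

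The main obstacle I anticipate is the precise Conley-Zehnder index calculation along multicovered exceptional Seifert fibers: the rotation number of the linearized Hopf flow interacts with the isotropy order $e_p$ and with whether $-I \in G$ in subtle ways, and all of these contributions must reconcile so that every good orbit lands in odd $\mu_{\CZ}$. A secondary technical point is verifying the dynamical convexity (or the alternative hypothesis of \cite{HN2} that rules out contractible Reeb orbits up to the relevant threshold) in each ADE case, which is needed for $\partial^2 = 0$ and to legitimize both the direct-limit construction and its identification with $CH_*(S^3/G, \xi_G)$.
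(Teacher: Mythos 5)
Your outline of the filtered computation matches the paper: the perturbation $\lambda_{G,\varepsilon}=(1+\varepsilon\fp^*f_H)\lambda_G$, the identification of orbits below action $L_N$ with pairs (critical point, multiplicity), the Conley--Zehnder computation via the linearized Hopf rotation plus a Morse/Hessian term, the exclusion of bad orbits over the saddle points, and the parity argument forcing $\partial^{L_N}=0$ are all exactly how Sections 2 and 3 of the paper proceed. (One small caveat: the dynamical convexity step is not quite routine here, since the hypothesis of the Hutchings--Nelson theorem that contractible orbits of index $3$ be embedded actually fails for $\gamma_{\mathfrak{s}}^n$, $e_-^4$, $\mathcal{V}^{2\mathscr{I}_{\mathscr{V}}}$; the paper has to rule out the relevant degenerate buildings by a separate index computation in the appendix, and your fallback of quoting a "no contractible orbits" hypothesis does not apply, since contractible orbits abound.)

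The genuine gap is the final sentence "the direct limit $N\to\infty$ commutes with the grading." There is no directed system yet: the groups $CH_*^{L_N}(S^3/G,\lambda_N,J_N)$ are computed with \emph{different} contact forms $\lambda_N=\lambda_{G,\varepsilon_N}$ and different $J_N$, so they are not nested subcomplexes of a single complex, and the maps of the system must be constructed. This is the content of the paper's Theorem \ref{theorem: cobordisms induce inclusions}: an exact symplectic cobordism from $(\lambda_N,J_N)$ to $(\lambda_M,J_M)$ induces a chain map by counting index-$0$ cylinders, and one must prove (i) that these counts are finite, which requires a compactness argument excluding breaking into configurations containing negative-index or extra components (Proposition \ref{proposition: buildings in cobordisms}, Corollary \ref{corollary: moduli spaces are finitie}, using automatic transversality), and (ii) that the induced map on homology is the standard inclusion, i.e.\ sends $[\gamma_+]$ to the class of the unique orbit $\gamma_-$ with the same multiplicity over the same orbifold point. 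Both steps hinge on Proposition \ref{proposition: CZ and action}, comparing Conley--Zehnder indices and actions of orbits in the same free homotopy class, which is proved case-by-case (cyclic, dihedral, polyhedral) by identifying $[S^1,S^3/G]$ with $\mbox{Conj}(G)$ and tabulating which iterates of which exceptional fibers represent which conjugacy classes. None of this is automatic — for cylindrical contact homology, cobordism maps are not known to be well defined in general — so without it the passage from the filtered groups to their limit, and hence the statement of the theorem itself, is unestablished.
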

     The directed system of filtered cylindrical contact homology groups $CH_*^{L_N}(S^3/G,\lambda_N,J_N)$ is described in Section \ref{subsection: structure of proof of main theorem}. Upcoming work of Hutchings and Nelson will show that this direct limit is an invariant of $(S^3/G,\xi_G)$, in the sense that it is isomorphic to $CH_*(S^3/G,\lambda,J)$  where $\lambda$ is any dynamically convex contact form on $S^3/G$ with kernel $\xi_G$, and $J\in\mathcal{J}(\lambda)$ is generic. 
     
     \medskip
     
     The brackets in Theorem \ref{theorem: main} describe the degree of the grading.\footnote{For example, $\Q^8[5]\oplus H_*(S^2;\Q)[3]$ is a ten dimensional space with nine dimensions in degree 5, and one dimension in degree 3.}  By the classification of finite subgroups $G$ of $\SU(2)$, the following enumerates the possible values of $m=|\text{Conj}(G)|$: 
\begin{enumerate}[(i)]
        \itemsep-.35em
       \item If $G$ is cyclic of order $n$, then $m=n$.
       \item If $G$ is binary dihedral,  $G\cong\D^*_{2n}$ for some $n$, then $m=n+3$.
       \item If $G$ is binary polyhedral, $G\cong\T^*, \Oc^*$, or $\I^*$, then  $m=7, 8$, or $9$, respectively.
\end{enumerate}

\begin{remark}\label{rem:SFS}
The cylindrical contact homology in Theorem \ref{theorem: main} recovers the presentation of the manifold $S^3/G$ as a Seifert fiber space, whose $S^1$-action agrees with the Reeb flow of a contact form defining $\xi_G$. Viewing the manifold $S^3/G$ as an $S^1$-bundle over an orbifold surface\footnote{Namely $S^2/H$ where $H = P(G) \subset \SO(3)$ and $P:\SU(2) \to \SO(3)$, cf. Section \ref{subsection: structure of proof of main theorem}.} homeomorphic to $S^2$, the copies of $H_*(S^2;\Q)$ appearing in Theorem \ref{theorem: main} may be understood as the \emph{orbifold Morse homology} of this base. Each orbifold point $p$ with isotropy order $n_p$ corresponds to an exceptional fiber, $\gamma_p$, in $S^3/G$, which may be realized as an embedded Reeb orbit. The generators of the $\Q^{m-2}[0]$ term are the iterates $\gamma_p^k$ for $k=1,2,\dots, n_p-1$ so that the dimension $m-2=\sum_p(n_p-1)$ of this summand can be regarded as a kind of total isotropy of the base. 
\end{remark}

\begin{remark}
Theorem \ref{theorem: main} can alternatively be expressed as
\[CH_*(S^3/G,\lambda_G, J)\cong\begin{cases} \Q^{m-1} & *=0,\\ \Q^{m} & *\geq2\,\, \mbox{and even} \\ 0 &\mbox{else.}\end{cases}\]
In this form, we realize the expected isomorphism \cite{BO}  between cylindrical contact homology and the positive $S^1$-equivariant symplectic cohomology with coefficients in $\Q$ of the crepant resolutions $Y$ of the singularities $\C^2/G$, as computed by McLean and Ritter. Their work shows that these groups with $\Q$-coefficients are free $\Q[u]$-modules of rank equal to $m=|\text{Conj}(G)|$, where $G\subset\SU(n)$ and $u$ has degree 2 \cite[Corollary 2.13]{MR}.  \end{remark}

\begin{remark}
Recent work of Haney and Mark  computes the cylindrical contact homology in \cite{HM} of a family of hyperbolic Brieskorn manifolds $\Sigma(p,q,r)$, for $p$, $q$, $r$ relatively prime positive integers satisfying $\frac{1}{p}+\frac{1}{q}+\frac{1}{r}<1$, using methods from \cite{N2}. Their work uses a family of \emph{hypertight} contact forms, whose Reeb orbits are non-contractible. These manifolds are also Seifert fiber spaces, whose cylindrical contact homology features summands arising from copies of the homology of the orbit space, as well as summands from the total isotropy of the orbifold. 
\end{remark}

\subsection{Structure of proof of main theorem}\label{subsection: structure of proof of main theorem}

We now outline the proof of Theorem \ref{theorem: main}.  Section \ref{section: geometric setup}  explains the process of perturbing a degenerate contact form $\lambda_G$ on $S^3/G$ using an orbifold Morse function. Given a finite, nontrivial subgroup $G\subset\SU(2)$, $H$ denotes the image of $G$ under the double cover of Lie groups $P:\SU(2) \cong \mbox{Spin}(3)\to\SO(3)$. By Lemma \ref{lemma: commutes}, the quotient by the $S^1$-action on the Seifert fiber space $S^3/G$ may be identified with a map $\fp:S^3/G\to S^2/H$. This $\fp$ fits into a commuting square of topological spaces \eqref{diagram: commuting square} involving the Hopf fibration $\fP:S^3\to S^2$.

An $H$-invariant Morse-Smale function on $(S^2, \omega_{\FS}(\cdot,j\cdot))$, constructed in Section \ref{appendix: constructing morse functions}, descends to an \emph{orbifold Morse function}, $f_H$, on $S^2/H$. Here, $\omega_{\FS}$ is the Fubini-Study form on $S^2\cong\C P^1$, and $j$ is the standard integrable complex structure. By Lemma \ref{lemma: reeb1}, the Reeb vector field of the perturbed contact form  on $S^3/G$ 
\[
\lambda_{G,\varepsilon}:=(1+\varepsilon\fp^*f_H)\lambda_G
\]
is the descent of the vector field
\[
R_{\lambda_\varepsilon}:=\frac{R_{\lambda}}{1+\varepsilon\fP^*f}-\varepsilon\frac{\widetilde{X_f}}{(1+\varepsilon\fP^*f)^2}
\]
 to $S^3/G$. Here, $\widetilde{X_f}$ is a horizontal lift to $S^3$ of the Hamiltonian vector field $X_f$ of $f$ on $S^2$, computed with respect to $\omega_{\FS}$, and we use the convention that $\iota_{X_f}\omega_{\FS}=-df$. Thus, the $\widetilde{X_f}$ term vanishes along exceptional fibers $\gamma_p$ of $S^3/G$ projecting to orbifold critical points $p\in S^2/H$ of $f_H$, implying that these parametrized circles and their iterates $\gamma_p^k$ are Reeb orbits of $\lambda_{G,\varepsilon}$. Lemma \ref{lemma: ActionThresholdLink} computes the Conley-Zehnder index $\mu_{\CZ}^\tau(\gamma_p^k)$ in terms of $k$ and the Morse index of $f_H$ at $p$ with respect to a global unitary trivialization. 
 
 Next we outline our procedure of taking direct limits in Section \ref{section: direct limits of filtered homology} of action filtered cylindrical contact homology in Section \ref{section: computation of filtered contact homology}.  Given a contact manifold $(Y,\lambda)$, the \emph{action} of a Reeb orbit $\gamma:\R/T\Z\to Y$ is the positive quantity 
 \[
 \mathcal{A}(\gamma):=\int_{\gamma}\lambda=T.
 \]
  For $L>0$, we let $\mathcal{P}^L(\lambda)\subset\mathcal{P}(\lambda)$ denote the set of orbits $\gamma$ with $\mathcal{A}(\gamma)<L$. A contact form $\lambda$ is $L$-\emph{nondegenerate} when all $\gamma\in\mathcal{P}^L(\lambda)$ are nondegenerate. If $\langle c_1(\xi),\pi_2(Y)\rangle=0$ and $\mu_{\CZ}(\gamma)\geq 3$ for all contractible $\gamma\in\mathcal{P}^L(\lambda)$, we say that the $L$-nondegenerate contact form $\lambda$ is $L$-\emph{dynamically convex}. By Lemma \ref{lemma: ActionThresholdLink}, given $L>0$, all  $\gamma\in\mathcal{P}^L(\lambda_{G,\varepsilon})$ are nondegenerate and project to critical points of $f_H$ under $\fp$, when $\varepsilon$ is sufficiently small.

This lemma allows for the computation in Section \ref{section: computation of filtered contact homology} of the  \emph{action filtered} cylindrical contact homology.  After fixing $L>0$, $\partial$ restricts to a differential, $\partial^L$, on the subcomplex generated by  $\gamma\in\mathcal{P}_{\text{good}}^L(\lambda)$, denoted $CC_*^L(Y,\lambda,J)$,  whose homology is denoted $CH_*^L(Y,\lambda,J)$.
This is because the differential decreases action: if $\mathcal{A}(\gamma_+)<\mathcal{A}(\gamma_-)$ then $\mathcal{M}^J(\gamma_+,\gamma_-)$ is empty because by Stokes' theorem, action decreases along holomorphic cylinders in a symplectization.  

In Section \ref{section: computation of filtered contact homology}, we use Lemma \ref{lemma: ActionThresholdLink} to produce a  sequence $(L_N, \lambda_N, J_N)_{N=1}^{\infty}$, where $L_N\nearrow\infty$ in $\R$, $\lambda_N$ is an $L_N$-dynamically convex contact form for $\xi_G$, and $J_N\in\mathcal{J}(\lambda_N)$ is generic. By Lemmas \ref{lemma: CZdihedral} and \ref{lemma: CZpolyhedral}, every orbit $\gamma\in\mathcal{P}^{L_N}_{\text{good}}(\lambda_N)$ is of even degree, and so $\partial^{L_N}=0$, providing
\begin{equation}\label{equation: Identify}
    CH_*^{L_N}(S^3/G,\lambda_N,J_N)\cong\Q\langle\,\mathcal{P}_{\text{good}}^{L_N}(\lambda_N)\,\rangle\cong\bigoplus_{i=0}^{2N-1}\Q^{m-2}[2i]\oplus\bigoplus_{i=0}^{2N-2} H_*(S^2;\Q)[2i].
\end{equation}

Finally, we prove Theorem \ref{theorem: cobordisms induce inclusions} in Section \ref{section: direct limits of filtered homology}, which states  that a completed symplectic  cobordism $(X,\lambda,J)$  from $(\lambda_N,J_N)$ to $(\lambda_M,J_M)$, for $N\leq M$, induces a  homomorphism,  \[\Psi:CH_*^{L_N}(S^3/G,\lambda_N,J_N)\to  CH_*^{L_M}(S^3/G,\lambda_M,J_M)\] which takes the form of the standard inclusion when making the identification \eqref{equation: Identify}. The proof of Theorem \ref{theorem: cobordisms induce inclusions} comes in two steps. First, the moduli spaces $\mathcal{M}_0^J(\gamma_+,\gamma_-)$ are finite by Proposition \ref{proposition: buildings in cobordisms} and Corollary \ref{corollary: moduli spaces are finitie}, implying that the map $\Psi$ is well defined. Second, the identification of $\Psi$ with a standard inclusion is made precise in the following manner. Given $\gamma_+\in\mathcal{P}^{L_N}_{\text{good}}(\lambda_N)$, there is a unique $\gamma_-\in\mathcal{P}^{L_M}_{\text{good}}(\lambda_M)$ which
\begin{enumerate}[(i)]
        \itemsep-.3em
\item projects to the same critical point of $f_H$ as $\gamma_+$ under $\fp$,
\item satisfies $m(\gamma_+)=m(\gamma_-)$.
\end{enumerate} 
When (i) and (ii) hold, we write $\gamma_+\sim\gamma_-$. We argue in Section \ref{section: direct limits of filtered homology} that $\Psi$ takes the form $\Psi([\gamma_+])=[\gamma_-]$, when $\gamma_+\sim\gamma_-$.

Theorem \ref{theorem: cobordisms induce inclusions} now implies that the system of filtered contact homology groups is identified with a sequence of inclusions of vector spaces, providing isomorphic direct limits: \[\varinjlim_N CH_*^{L_N}(S^3/G,\lambda_N, J_N)\cong\bigoplus_{i\geq0}\Q^{m-2}[2i]\oplus\bigoplus_{i\geq0}H_*(S^2;\Q).\]

\subsection{Connections to orbifold Morse homology}\label{subsection: cylindrical contact homology as an analogue of orbifold Morse homology}
Using the construction of the \emph{orbifold Morse-Smale-Witten complex} as in Cho and Hong in \cite{CH} we can draw the following parallels between orbifold Morse homology and cylindrical contact homology. Given an {orbifold Morse function} $f$ on an orbifold $X$, the chain group $CM_*(X,f)$ is generated by the {orientable} critical points of $f$. The differential, $\partial^M$, is given as a weighted count of the negative gradient flow lines between orientable critical points in $X$.  There are two notable similarities between the chain complexes $(CC_*,\partial)$ and $(CM_*,\partial^M)$, exemplified by our computations.  

\medskip

(1) \emph{ Bad Reeb orbits are analogous to non-orientable critical points.} \\
Bad Reeb orbits are excluded as generators of $CC_*$ for the same reasons that {non-orientable critical points} are excluded as generators of $CM_*$. A critical point $p$ of $f$ on an orbifold is \emph{non-orientable} if the action of its isotropy group $\Gamma_p$ on a choice of  unstable manifold is not orientation preserving. Analogously, a Reeb orbit $\gamma$ is \emph{bad} if the action of its cyclic deck group $\Delta_{\gamma}$ on an asymptotic operator is not orientation preserving.  \tb{}

Our Seifert projections $\fp:S^3/G\to S^2/H$ geometrically realize this analogy: if $\gamma$ is a bad Reeb orbit associated to $(1+\varepsilon\fp^*f_H)\lambda_G$ in $S^3/G$ that projects to orbifold critical point $p$ of $f_H$, then $p$ is non-orientable. Conversely, if $p\in S^2/H$ is a non-orientable critical point of $f_H$, then there is a bad Reeb orbit $\gamma$ associated to $(1+\varepsilon\fp^*f_H)\lambda_G$ in $S^3/G$ projecting to $p$.  This interplay can be realized through the pairs $(\gamma,p)=(h^2, p_h)$ for the binary dihedral group in Sections \ref{subsection: dihedral} and $(\gamma,p)=(\mathcal{E}^2, \mathfrak{e})$ for the binary polyhedral group in Section \ref{subsection: polyhedral}.

\medskip
    
(2) \emph{The differentials are structurally identical}. \\ Take good Reeb orbits $\alpha$ and $\beta$ with $\mu_{\CZ}(\alpha)-\mu_{\CZ}(\beta)=1$, and take orientable critical points $p$ and $q$ of orbifold Morse $f$ with $\text{ind}_f(p)-\text{ind}_f(q)=1$. Now compare 
\[
\langle \partial \alpha, \beta\rangle=\sum_{u\in\mathcal{M}_1^J(\alpha,\beta)/\R}\dfrac{\epsilon(u)d(\alpha)}{d(u)},\,\,\,\,\,\,\,\,\, \langle \partial^M p, q\rangle=\sum_{x\in\mathcal{M}(p, q)/\R}\dfrac{\epsilon(x)|\Gamma_p|}{|\Gamma_x|}.\]

 Here, $\mathcal{M}(p, q)$ is the space of negative gradient paths $x$ from $p$ to $q$, and $\Gamma_x$ is the local isotropy group at any point on the path $x$, whose order divides $|\Gamma_p|$. Both $\epsilon\in\{\pm1\}$ quantities come from choices of orientations in each setting and are well-defined because $\alpha$ and $\beta$ are good, and because $p$ and $q$ are orientable. 
 
The similarities of both boundary operators as  \emph{weighted} counts of moduli spaces reflect the parallels between  the breaking and gluing of the two theories: a single broken gradient path or building may serve as the limit of \emph{multiple} ends of a 1-dimensional moduli space in either setting.  For a thorough treatment of why these signed counts generally produce a differential that squares to zero, see \cite[Theroem 5.1]{CH} (in the orbifold case) and \cite[\S 4.3]{HN} (in the contact case).  \\

In Sections \ref{subsection: cylinders over orbifold Morse trajectories} and \ref{subsection: visualizing holomorphic cylinders: an example}, we explain the analogies between the contact data of $S^3/G$ and the orbifold Morse data of $S^2/H$ in further detail.

\vspace{-.25cm}

\subsection*{Acknowledgements}
Leo Digiosia thanks his advisor, Jo Nelson, for her exceptional guidance and discussions. Leo Digiosia was supported by NSF grants \href{https://www.nsf.gov/awardsearch/showAward?AWD_ID=1745670&HistoricalAwards=false}{DMS-1745670}, \href{https://www.nsf.gov/awardsearch/showAward?AWD_ID=1840723&HistoricalAwards=false}{DMS-1840723}, and \href{https://www.nsf.gov/awardsearch/showAward?AWD_ID=2104411&HistoricalAwards=false}{DMS-2104411}.  Jo Nelson is supported by NSF grants \href{https://www.nsf.gov/awardsearch/showAward?AWD_ID=2104411&HistoricalAwards=false}{DMS-2104411} and \href{https://www.nsf.gov/awardsearch/showAward?AWD_ID=2142694&HistoricalAwards=false}{CAREER DMS-2142694}.  We thank the referee for their thoughtful reading and helpful comments on this paper, especially with respect to the cobordism maps in Section 4.  (Section 4 also benefited from a comment from Chris Wendl and his book \cite{wint}.)

\section{Geometric setup and dynamics}\label{section: geometric setup}
In this section we first review the process of perturbing degenerate contact forms on $S^3$ and $S^3/G$ using a Morse function to achieve nondegeneracy up to an action threshold, following \cite[\S 1.5]{N2}. We then identify the associated Reeb orbits of $S^3/G$ and compute their Conley Zehnder indices in Lemma \ref{lemma: ActionThresholdLink}.  In Section \ref{appendix: constructing morse functions} we construct the $H$-invariant Morse functions we use to perturb the contact forms on $S^3/G$.  In Sections  \ref{subsection: cylinders over orbifold Morse trajectories} and \ref{subsection: visualizing holomorphic cylinders: an example} we elucidate how the Reeb dynamics realize the Morse orbifold data associated to $S^2/H$.

\subsection{Spherical geometry and associated Reeb dynamics}\label{subsection: spherical geometry and associated Reeb dynamics}
The diffeomorphism between $S^3\subset\C^2$ and  $\SU(2)$ provides $S^3$ with the structure of a Lie group:
\begin{equation} \label{equation: S^3 Lie}
    (\alpha,\beta)\in S^3\mapsto\begin{pmatrix}\alpha &-\overline{\beta}\,\,\\ \beta & \overline{\alpha}\end{pmatrix}\in\SU(2)
\end{equation}
and we see that $e=(1,0)\in S^3$ is the identity element. The round contact form on $S^3$, denoted $\lambda$, is defined as the restriction of the 1-form $\iota_{V}\omega_0\in\Omega^1(\C^2)$ to $S^3$, where
\[\omega_0:=\frac{i}{2}\sum_{k=1}^2dz_k\wedge d\overline{z_k}\,\,\,\,\,\text{and}\,\,\,\,\,V:=\frac{1}{2}\sum_{k=1}^2z_k\partial_{z_k}+\overline{z_k}\partial_{\overline{z_k}},\,\,\implies\,\,\,\iota_{V}\omega_0=\frac{i}{4}\sum_{k=1}^2z_k\wedge d\overline{z_k}-\overline{z_k}dz_k.\] The $\SU(2)$-action on $\C^2$ preserves  $\omega_0$ and $V$, and so the $\SU(2)$-action on $S^3$ preserves $\lambda$.

There is a natural Lie algebra isomorphism between the tangent space of the identity element of a Lie group and its collection of left-invariant vector fields.  The contact plane $\xi_e=\text{ker}(\lambda_e)$ at the identity element $e=(1,0)\in S^3$ is spanned by the tangent vectors $\partial_{x_2}|_e$ and $\partial_{y_2}|_e$, where we are viewing \[\xi_e\subset T_e\C^2\cong T_e\R^4=\text{Span}_{\R}\{\partial_{x_i}|_e,\partial_{y_i}|_e\}.\] Let $V_1$ and $V_2$ be the  left-invariant vector fields corresponding to $\partial_{x_2}|_e=\langle 0,0,1,0\rangle$ and  $\partial_{y_2}|_e=\langle 0,0,0,1\rangle$, respectively. Because $S^3$ acts on itself by \emph{contactomorphisms},  $V_1$ and $V_2$ are sections of $\xi$ and provide a global unitary trivialization of $(\xi, d\lambda|_{\xi},J_{\C^2})$, denoted $\tau$:
\begin{equation}\label{equation: global trivialization}
    \tau:S^3\times\R^2\to\xi, \,\,\, (p,\eta_1,\eta_2)\mapsto \eta_1V_1(p)+\eta_2V_2(p)\in\xi_p.
\end{equation}
Here, $J_{\C^2}$ is the standard integrable complex structure on $\C^2$. Note that $J_{\C^2}
(V_1)=V_2$ everywhere. Given a Reeb orbit $\gamma$ of any contact form on $S^3$, let the symbol $\mu_{\CZ}(\gamma)$ denote the Conley Zehnder index of $\gamma$ with respect to this $\tau$. If $(\alpha,\beta)\in S^3$, write $\alpha=a+ib$ and $\beta=c+id$. Then, with respect to the ordered basis $(\partial_{x_1}, \partial_{y_1}, \partial_{x_2}, \partial_{y_2})$ of $T_{(\alpha,\beta)}\R^4$, we have the following expressions
\begin{equation}\label{equation: vector field coordinates}
    V_1(\alpha,\beta)=\langle -c,d, a, -b\rangle,\,\,\,\,
    V_2(\alpha, \beta)=\langle -d,-c,b,a\rangle.
\end{equation}

Consider the double cover of Lie groups, $P:\SU(2)\to\SO(3)$. The kernel of $P$ has order 2 and is generated by $-\text{Id}\in\SU(2)$, the only element of $\SU(2)$ of order 2.

\begin{equation}\label{equation: P in coordinates}
    \begin{pmatrix}\alpha & -\overline{\beta}\,\, \\ \beta & \overline{\alpha}\end{pmatrix}\in\SU(2)\xmapsto{P}\begin{pmatrix}|\alpha|^2-|\beta|^2 & 2\text{Im}(\alpha\beta) & 2\text{Re}(\alpha\beta) \\ -2\text{Im}(\overline{\alpha}\beta) & \text{Re}(\alpha^2+\beta^2) & -\text{Im}(\alpha^2+\beta^2)\\ -2\text{Re}(\overline{\alpha}\beta) & \text{Im}(\alpha^2-\beta^2) & \text{Re}(\alpha^2-\beta^2)\end{pmatrix}\in\SO(3)
\end{equation}
A diffeomorphism $\C P^1\to S^2\subset\R^3$ is given in homogeneous coordinates ($|\alpha|^2+|\beta|^2=1$) by \begin{equation}\label{equation: sphere cp1 diffeomorphism}
    (\alpha:\beta)\in \C P^1\mapsto(|\alpha|^2-|\beta|^2,-2\text{Im}(\bar{\alpha}\beta),-2\text{Re}(\bar{\alpha}\beta))\in S^2.
\end{equation}
We have an $\SO(3)$-action on $\C P^1$, pulled back from the $\SO(3)$-action on $S^2$ by \eqref{equation: sphere cp1 diffeomorphism}. Lemma \ref{lemma: commutes} illustrates how the action of $\SU(2)$ on $S^3$ is related to the action of $\SO(3)$ on $\C P^1\cong S^2$ via $P:\SU(2)\to\SO(3).$

\begin{lemma} \label{lemma: commutes}
For a point $z$ in $S^3$, let $[z]\in\C P^1$ denote the corresponding point under the quotient of the $S^1$-action on $S^3$. Then for all $z\in S^3$, and all matrices $A\in\mbox{\em SU}(2)$, we have \[[A\cdot z]=P(A)\cdot[z]\in\C P^1\cong S^2.\]
\end{lemma}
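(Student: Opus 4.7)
The plan is to rephrase the lemma in terms of the composite Hopf projection $\fP:S^3\to S^2$, defined as the $S^1$-quotient $S^3\to\C P^1$ followed by the diffeomorphism \eqref{equation: sphere cp1 diffeomorphism}. In coordinates this composite is
\[
\fP(\alpha,\beta)=(|\alpha|^2-|\beta|^2,\,-2\,\text{Im}(\overline{\alpha}\beta),\,-2\,\text{Re}(\overline{\alpha}\beta)).
\]
The lemma is then equivalent to the $\SU(2)$-equivariance statement $\fP(A\cdot z)=P(A)\cdot\fP(z)$ for every $z\in S^3$ and $A\in\SU(2)$. My strategy is to reduce this identity to the special case $z=e=(1,0)$ using the Lie group structure of $S^3$, and then verify the base case by reading off the first column of the matrix $P(A)$ recorded in \eqref{equation: P in coordinates}.

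For the reduction, I would invoke the identification \eqref{equation: S^3 Lie}: associate to $z=(\alpha,\beta)\in S^3$ the matrix $B_z\in\SU(2)$, so that $z=B_z\cdot e$. A brief matrix-vector calculation shows that the $\SU(2)$-action on $\C^2$ is intertwined with left multiplication in $\SU(2)$, i.e.\ $B_{A\cdot z}=AB_z$, whence $A\cdot z=(AB_z)\cdot e$. Because $P:\SU(2)\to\SO(3)$ is a homomorphism, it then suffices to prove the equivariance identity only for points of the form $M\cdot e$: for then $\fP(A\cdot z)=\fP((AB_z)\cdot e)=P(AB_z)\fP(e)=P(A)P(B_z)\fP(e)=P(A)\fP(B_z\cdot e)=P(A)\fP(z)$, as desired.

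The base case is essentially by inspection. One has $\fP(e)=(1,0,0)$, so $P(M)\cdot\fP(e)$ is the first column of the matrix $P(M)$. On the other hand, writing $M=\begin{pmatrix} a & -\overline{b}\\ b & \overline{a}\end{pmatrix}\in\SU(2)$, the vector $M\cdot e=(a,b)$ is the first column of $M$, and substituting into the coordinate formula for $\fP$ gives precisely the three entries appearing in the first column of $P(M)$ in \eqref{equation: P in coordinates}. This completes the base case and therefore the lemma.

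I do not expect a substantial obstacle here, since the argument is only the equivariance of the Hopf fibration combined with a single-column matrix check. The only point requiring any care is keeping the three coordinate conventions of the paper aligned: the matrix presentation $S^3\cong\SU(2)$ of \eqref{equation: S^3 Lie}, the $\C P^1\cong S^2$ identification \eqref{equation: sphere cp1 diffeomorphism}, and the explicit formula for $P$ in \eqref{equation: P in coordinates}. With these already fixed in the excerpt, the signs and conjugations align automatically, and no further choices need to be made.
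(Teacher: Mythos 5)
Your proposal is correct and follows essentially the same route as the paper: verify the identity at $z=e=(1,0)$ by observing that $P(A)\cdot[e]$ is the first column of $P(A)$ in \eqref{equation: P in coordinates}, which matches $[A\cdot e]$ under \eqref{equation: sphere cp1 diffeomorphism}, and then handle general $z$ by writing $z=B\cdot e$ via \eqref{equation: S^3 Lie} and using that $P$ is a group homomorphism. The only cosmetic difference is that you spell out the intertwining $B_{A\cdot z}=AB_z$ explicitly, which the paper leaves implicit.
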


\begin{proof}
First, note that the result holds for the case $z=e=(1,0)\in S^3$. This is because $[e]\in\C P^1$ corresponds to $(1,0,0)\in S^2$ under \eqref{equation: sphere cp1 diffeomorphism}, and so for any $A$, $P(A)\cdot[e]$ is the first column of the $3\times 3$ matrix $P(A)$ appearing in \eqref{equation: P in coordinates}. That is, \begin{equation}\label{equation: sphere points}
    P(A)\cdot[e]=(|\alpha|^2-|\beta|^2,-2\text{Im}(\bar{\alpha}\beta),-2\text{Re}(\bar{\alpha}\beta)),
\end{equation} (where $(\alpha,\beta)\in S^3$ is the unique element corresponding to $A\in \SU(2)$, using \eqref{equation: S^3 Lie}). By \eqref{equation: sphere cp1 diffeomorphism}, the point \eqref{equation: sphere points} equals $[(\alpha,\beta)]=[(\alpha,\beta)\cdot (1,0)]=[A\cdot e]$, and so the result holds when $z=e$.

For the general case, note that any $z\in S^3$ equals $B\cdot e$ for some $B\in\SU(2)$, and use the fact that $P$ is a group homomorphism.
\end{proof}

The Reeb flow of $\lambda$ is given by the $S^1\subset\C^*$ (Hopf) action, $p\mapsto e^{it}\cdot p$. Thus, all $\gamma\in\mathcal{P}(\lambda)$ have period $2k\pi$, with linearized return maps equal to $\text{Id}:\xi_{\gamma(0)}\to\xi_{\gamma(0)}$, and are degenerate.

\begin{notation}\label{notation: morse data}
Following the general recipe of perturbing the degenerate contact form on a prequantization bundle, outlined in \cite[\S 1.5]{N2}, we establish the following notation:
\begin{itemize}
     \itemsep-.35em
    \item $\fP:S^3\to S^2$  is the Hopf fibration.
    \item $f$ is a  Morse-Smale function on $(S^2,\omega_{\FS}(\cdot, j\cdot))$ and $\text{Crit}(f)$ is its set of critical points,
    \item For $\varepsilon>0$,
\begin{itemize}    
\item[]      $f_{\varepsilon}:=1+\varepsilon f:S^2\to \R$,
\item[] $F_{\varepsilon}:=f_{\varepsilon}\circ\fP:S^3\to \R$,
\item[]  $\lambda_{\varepsilon}:=F_{\varepsilon}\lambda\in\Omega^1(S^3)$.
\end{itemize}
    \item $\widetilde{X_f}\in\xi$ is the horizontal lift of $X_f\in TS^2$ using the fiberwise linear symplectomorphism $d\fP|_{\xi}:(\xi,d\lambda|_{\xi})\to (TS^2, \omega_{\text{FS}})$, where $X_f$ denotes the Hamiltonian vector field of $f$ on $S^2$ with respect to $\omega_{\text{FS}}$.
\end{itemize}
\end{notation}
\begin{remark}\label{remark: ham}
Our convention is that for a smooth, real valued function $f$ on symplectic manifold $(M,\omega)$, the Hamiltonian vector field $X_f$ uniquely satisfies $\iota_{X_f}\omega=-df$. 
\end{remark}

For small $\varepsilon$,  ker $\lambda_{\varepsilon} = \mbox{ker} \lambda$. We refer to $\lambda_{\varepsilon}$ as the \emph{perturbed contact form} on $S^3$. Although $\lambda$ and $\lambda_{\varepsilon}$ define the same contact structure, their Reeb dynamics differ.

\begin{lemma} \label{lemma: reeb1}
The following relationship between vector fields on $S^3$ holds:
\[R_{\lambda_{\varepsilon}}=\frac{R_{\lambda}}{F_{\varepsilon}}-\varepsilon\frac{\widetilde{X_f}}{F_{\varepsilon}^2}.\]
\end{lemma}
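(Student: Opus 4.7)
The plan is to verify directly that the vector field
\[
Y := \frac{R_{\lambda}}{F_{\varepsilon}} - \varepsilon\,\frac{\widetilde{X_f}}{F_{\varepsilon}^{2}}
\]
satisfies the two defining conditions for the Reeb field of $\lambda_{\varepsilon} = F_{\varepsilon}\lambda$, namely $\lambda_{\varepsilon}(Y) = 1$ and $\iota_{Y}d\lambda_{\varepsilon} = 0$. Throughout, I will use three geometric facts: (i) $\widetilde{X_f} \in \xi = \ker\lambda$ by construction of the horizontal lift; (ii) the Reeb vector field $R_{\lambda}$ is vertical for the Hopf fibration $\mathfrak{P}$, so $d\mathfrak{P}(R_{\lambda}) = 0$; and (iii) $d\mathfrak{P}|_{\xi}\colon(\xi,d\lambda|_{\xi}) \to (TS^{2},\omega_{\FS})$ is a fiberwise linear symplectomorphism.

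First I would check the normalization. Since $\lambda(R_{\lambda}) = 1$ and $\lambda(\widetilde{X_f}) = 0$ by (i), one immediately gets $\lambda(Y) = 1/F_{\varepsilon}$, hence $\lambda_{\varepsilon}(Y) = F_{\varepsilon}\lambda(Y) = 1$. Next I would compute $dF_{\varepsilon}(Y)$. Because $F_{\varepsilon} = 1 + \varepsilon f\circ\mathfrak{P}$, we have $dF_{\varepsilon}(R_{\lambda}) = \varepsilon\,df(d\mathfrak{P}(R_{\lambda})) = 0$ by (ii), and $dF_{\varepsilon}(\widetilde{X_f}) = \varepsilon\,df(X_f) = 0$ since $f$ is constant along its own Hamiltonian flow. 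Hence $dF_{\varepsilon}(Y) = 0$, which will make the Leibniz expansion of $d\lambda_{\varepsilon}$ collapse nicely.

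For the second condition, I would expand $d\lambda_{\varepsilon} = dF_{\varepsilon}\wedge\lambda + F_{\varepsilon}\,d\lambda$ and evaluate on $(Y,W)$ for an arbitrary $W \in TS^{3}$:
\[
d\lambda_{\varepsilon}(Y,W) = dF_{\varepsilon}(Y)\lambda(W) - \lambda(Y)\,dF_{\varepsilon}(W) + F_{\varepsilon}\,d\lambda(Y,W) = -\tfrac{1}{F_{\varepsilon}}\,dF_{\varepsilon}(W) + F_{\varepsilon}\,d\lambda(Y,W).
\]
Now $d\lambda(R_{\lambda},W) = 0$, so the remaining term to understand is $d\lambda(\widetilde{X_f},W)$. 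Decomposing $W$ into vertical and horizontal components, the vertical contribution vanishes (again because $\iota_{R_{\lambda}}d\lambda = 0$), and on horizontal vectors (iii) gives
\[
d\lambda(\widetilde{X_f},W) = \omega_{\FS}(X_f,d\mathfrak{P}(W)) = -df(d\mathfrak{P}(W)) = -\tfrac{1}{\varepsilon}\,dF_{\varepsilon}(W),
\]
using Remark \ref{remark: ham}. Substituting back yields $F_{\varepsilon}\,d\lambda(Y,W) = \tfrac{1}{F_{\varepsilon}}\,dF_{\varepsilon}(W)$, which exactly cancels the first term. Thus $\iota_{Y}d\lambda_{\varepsilon} = 0$, and uniqueness of the Reeb vector field forces $Y = R_{\lambda_{\varepsilon}}$.

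The only step that requires a little care is identifying $d\lambda(\widetilde{X_f},W)$ with $-\tfrac{1}{\varepsilon}dF_{\varepsilon}(W)$; once one splits $W$ into vertical and horizontal parts and invokes (iii), the Hamiltonian sign convention of Remark \ref{remark: ham} does the rest. Everything else is formal Leibniz bookkeeping.
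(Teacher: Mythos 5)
Your verification is correct: with the paper's convention $\iota_{X_f}\omega_{\FS}=-df$ (Remark \ref{remark: ham}), the normalization $\lambda_{\varepsilon}(Y)=1$, the vanishing $dF_{\varepsilon}(Y)=0$, and the cancellation in $\iota_Y d\lambda_{\varepsilon}=\iota_Y(dF_{\varepsilon}\wedge\lambda)+F_{\varepsilon}\,\iota_Y d\lambda$ all check out, and uniqueness of the Reeb field then gives the formula with exactly the minus sign stated in the lemma. The one structural difference is that the paper does not prove this lemma at all: it simply cites \cite[Prop.\ 4.10]{N2} and remarks on the sign discrepancy coming from the Hamiltonian convention. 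Your argument is therefore a self-contained substitute for that citation, and it leans on precisely the facts the paper has already packaged into its setup: $\widetilde{X_f}\in\xi=\ker\lambda$, verticality of $R_{\lambda}$ under $\fP$, and the assertion in Notation \ref{notation: morse data} that $d\fP|_{\xi}\colon(\xi,d\lambda|_{\xi})\to(TS^2,\omega_{\FS})$ is a fiberwise symplectomorphism (equivalently $d\lambda=\fP^*\omega_{\FS}$), which is what justifies the key identity $d\lambda(\widetilde{X_f},W)=\omega_{\FS}(X_f,d\fP(W))$ after splitting $W$ into its $R_{\lambda}$- and $\xi$-components. What the citation buys the paper is brevity and an implicit appeal to the general prequantization-bundle framework of \cite{N2}; what your computation buys is transparency about where the sign convention enters, which is exactly the point the paper's proof flags but does not display.
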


\begin{proof}
This is \cite[Prop. 4.10]{N2}. Note that the sign discrepancy is a result of our convention regarding  Hamiltonian vector fields, see Remark \ref{remark: ham}.
\end{proof}

We now explore how the relationship between vector fields from Lemma \ref{lemma: reeb1} provides a relationship between Reeb and Hamiltonian flows. 
\begin{notation}
(Reeb and Hamiltonian flows). For any $t\in\R$,
\begin{itemize}
    \itemsep-.35em
    \item $\phi_0^t:S^3\to S^3$ denotes the time $t$ flow of the unperturbed Reeb vector field $R_{\lambda}$,
    \item $\phi^t:S^3\to S^3$ denotes the time $t$ flow of the perturbed Reeb vector field $R_{\lambda_{\varepsilon}}$,
    \item $\varphi^t:S^2\to S^2$ denotes the time $t$ flow of the vector field $V:=-\frac{\varepsilon X_{f}}{f_{\varepsilon}^2}.$
    
\end{itemize}
\end{notation}

\begin{lemma}\label{lemma: flow}
For all $t$ values, we have $\fP\circ\phi^t=\varphi^t\circ\fP$ as smooth maps $S^3\to S^2$.
\end{lemma}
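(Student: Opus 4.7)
The plan is to show that the two vector fields $R_{\lambda_\varepsilon}$ on $S^3$ and $V$ on $S^2$ are $\fP$-related, i.e.\ $d\fP \circ R_{\lambda_\varepsilon} = V \circ \fP$, after which the claimed intertwining of flows follows from the standard uniqueness theorem for ODEs (a smooth map that is compatible with vector fields carries integral curves of one to integral curves of the other).

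To verify $\fP$-relatedness, I would start from the formula of Lemma \ref{lemma: reeb1},
\[
R_{\lambda_\varepsilon} = \frac{R_\lambda}{F_\varepsilon} - \varepsilon\,\frac{\widetilde{X_f}}{F_\varepsilon^2},
\]
and apply $d\fP$ termwise. First, $d\fP(R_\lambda) = 0$ because $R_\lambda$ generates precisely the Hopf $S^1$-action, whose orbits are the fibers of $\fP$. Second, $d\fP(\widetilde{X_f}) = X_f \circ \fP$ by the very definition of $\widetilde{X_f}$ as the horizontal lift of $X_f$ through the fiberwise symplectomorphism $d\fP|_\xi:(\xi,d\lambda|_\xi)\to(TS^2,\omega_{\FS})$. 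Finally, since $F_\varepsilon = f_\varepsilon\circ\fP$, the scalar factor $F_\varepsilon^2$ at $p\in S^3$ equals $f_\varepsilon(\fP(p))^2$, so
\[
d\fP_p(R_{\lambda_\varepsilon}(p)) = -\varepsilon\,\frac{X_f(\fP(p))}{f_\varepsilon(\fP(p))^2} = V(\fP(p)),
\]
which is exactly the required $\fP$-relatedness.

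With $\fP$-relatedness in hand, consider for fixed $p\in S^3$ the curve $\gamma(t) := \fP(\phi^t(p))$ in $S^2$. Differentiating and using the chain rule together with the $\fP$-relatedness identity,
\[
\gamma'(t) = d\fP_{\phi^t(p)}\bigl(R_{\lambda_\varepsilon}(\phi^t(p))\bigr) = V\bigl(\fP(\phi^t(p))\bigr) = V(\gamma(t)),
\]
so $\gamma$ is an integral curve of $V$ with initial value $\gamma(0) = \fP(p)$. By uniqueness of solutions to ODEs, $\gamma(t) = \varphi^t(\fP(p))$, which is the assertion $\fP\circ\phi^t = \varphi^t\circ\fP$.

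I do not anticipate a genuine obstacle: the whole argument is a one-line computation followed by a standard naturality-of-flows statement. The only mild care needed is bookkeeping with the two scalar factors $F_\varepsilon$ and $f_\varepsilon^2$ under $\fP$, and ensuring the sign convention for the Hamiltonian vector field (Remark \ref{remark: ham}) is consistent so that the sign in the definition of $V$ matches the one coming out of Lemma \ref{lemma: reeb1}; both are dictated by $\iota_{X_f}\omega_{\FS} = -df$.
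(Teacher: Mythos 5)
Your proposal is correct and follows essentially the same route as the paper: both arguments show, via Lemma \ref{lemma: reeb1}, that $d\fP$ carries $R_{\lambda_\varepsilon}$ to $V$ (you just spell out the termwise check $d\fP(R_\lambda)=0$, $d\fP(\widetilde{X_f})=X_f\circ\fP$, $F_\varepsilon=f_\varepsilon\circ\fP$ more explicitly), and then conclude by uniqueness of integral curves that $\fP$ intertwines the two flows. No gaps.
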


\begin{proof}
 Pick $z\in S^3$ and let $\widetilde{\gamma}:\R\to S^3$ denote the unique integral curve for $R_{\lambda_{\varepsilon}}$ which passes through $z$ at time $t=0$, i.e., $\widetilde{\gamma}(t)=\phi^t(z)$.  By Lemma \ref{lemma: reeb1}, $d\fP$ carries the derivative of $\widetilde{\gamma}$ precisely to the vector $V\in TS^2$. Thus, $\fP\circ\widetilde{\gamma}:\R\to S^2$ is the unique integral curve, $\gamma$, of $V$ passing through $p:=\fP(z)$ at time $t=0$, i.e., $\gamma(t)=\varphi^t(p)$. Combining these facts provides \[\fP(\widetilde{\gamma}(t))=\gamma(t)\,\,\implies\,\,\fP(\phi^t(z))=\varphi^t(p)\,\,\implies\fP(\phi^t(z))=\varphi^t(\fP(z)).\]
\end{proof}

Lemma \ref{lemma: orbits} describes the orbits $\gamma\in\mathcal{P}(\lambda_{\varepsilon})$ projecting to critical points of $f$ under $\fP$.
\begin{lemma}\label{lemma: orbits}
Let $p\in\mbox{\em Crit}(f)$ and take $z\in\fP^{-1}(p)$. Then the map \[\gamma_p:[0,2\pi f_{\varepsilon}(p)]\to S^3,\,\,\,\,t\mapsto e^{\frac{it}{f_{\varepsilon}(p)}}\cdot z\]
descends to a closed, embedded Reeb orbit $\gamma_p:\R/2\pi f_{\varepsilon}(p)\Z\to S^3$ of $\lambda_{\varepsilon}$, passing through point $z$ in $S^3$, whose image under $\fP$ is $\{p\}\subset S^2$, where $\cdot$ denotes the $S^1\subset\C^*$ action on $S^3$.
\end{lemma}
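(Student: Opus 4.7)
The plan is to use Lemma \ref{lemma: reeb1} to see that, along the Hopf fiber $\fP^{-1}(p)$, the perturbed Reeb vector field is just a constant rescaling of the unperturbed Reeb vector field $R_\lambda$; then the asserted formula for $\gamma_p$ is simply the reparametrization of the Hopf flow by this constant.

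First I would unpack what it means for $p$ to be a critical point of $f$. Since $p \in \text{Crit}(f)$, the Hamiltonian vector field $X_f$ vanishes at $p$ (because $\iota_{X_f}\omega_{\FS} = -df$, and $df_p = 0$). By definition of the horizontal lift, $\widetilde{X_f}$ vanishes identically along the entire fiber $\fP^{-1}(p) \subset S^3$. Moreover, since $F_\varepsilon = f_\varepsilon \circ \fP$ is constant equal to $f_\varepsilon(p)$ on this fiber, Lemma \ref{lemma: reeb1} gives
\[
R_{\lambda_\varepsilon}(w) \;=\; \frac{1}{f_\varepsilon(p)}\, R_\lambda(w) \qquad \text{for every } w \in \fP^{-1}(p).
\]

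Next I would verify directly that the proposed curve is an integral curve of $R_{\lambda_\varepsilon}$. The Hopf $S^1$-action preserves each fiber of $\fP$, so $\gamma_p(t) = e^{it/f_\varepsilon(p)} \cdot z$ stays in $\fP^{-1}(p)$ for all $t$; in particular its image under $\fP$ is the single point $\{p\}$. Since the Reeb flow of $\lambda$ is the Hopf flow $t \mapsto e^{it}\cdot z$, the chain rule gives $\gamma_p'(t) = \tfrac{1}{f_\varepsilon(p)} R_\lambda(\gamma_p(t))$, which by the displayed identity equals $R_{\lambda_\varepsilon}(\gamma_p(t))$. Thus $\gamma_p$ solves the Reeb equation for $\lambda_\varepsilon$.

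Finally I would check closedness and embeddedness. Setting $t = 2\pi f_\varepsilon(p)$ gives $\gamma_p(2\pi f_\varepsilon(p)) = e^{2\pi i}\cdot z = z$, so $\gamma_p$ descends to a loop $\R/2\pi f_\varepsilon(p)\Z \to S^3$. Because the Hopf $S^1$-action on $S^3$ is free and the phase $e^{it/f_\varepsilon(p)}$ runs once around the unit circle as $t$ runs over $[0, 2\pi f_\varepsilon(p))$, the map is injective on this half-open interval, hence embedded. There is no substantial obstacle here: the whole argument reduces to observing that over a critical point the perturbing term in Lemma \ref{lemma: reeb1} drops out and the remaining term is a constant multiple of the Hopf Reeb vector field.
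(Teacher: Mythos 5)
Your proposal is correct and follows essentially the same route as the paper: both arguments note that $\widetilde{X_f}$ vanishes along the fiber $\fP^{-1}(p)$ because it lifts $X_f(p)=0$, use Lemma \ref{lemma: reeb1} to reduce $R_{\lambda_\varepsilon}$ to the constant rescaling $R_\lambda/f_\varepsilon(p)$ there, and apply the chain rule to the reparametrized Hopf circle. Your explicit check of closedness and embeddedness via freeness of the Hopf action is a minor elaboration of what the paper leaves implicit.
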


\begin{proof}
The map $\R/2\pi\Z\to S^3$, $t\mapsto e^{it}\cdot z$ is a closed, embedded integral curve for the degenerate Reeb field $R_{\lambda}$, and so by the chain rule we have that $\dot{\gamma_p}(t)=R_{\lambda}(\gamma_p(t))/f_{\varepsilon}(p)$. Note that $\fP(\gamma(t))=\fP(e^{\frac{it}{f_{\varepsilon}(p)}}\cdot z)=\fP(z)=p$ and, because $\widetilde{X_f}(\gamma(t))$ is a lift of $X_f(p)=0$, we have $\widetilde{X_f}(\gamma(t))=0$. By the description of $R_{\lambda_{\varepsilon}}$ in Lemma \ref{lemma: reeb1}, we have $\dot{\gamma_p}(t)=R_{\lambda_{\varepsilon}}(\gamma_p(t))$.
\end{proof}

 Next we set notation to be used in describing the local models for the linearized Reeb flow along the orbits $\gamma_p$ from Lemma \ref{lemma: orbits}. For $s\in\R$, $\mathcal{R}(s)$ denotes the $2\times2$ rotation matrix: \[\mathcal{R}(s):=\begin{pmatrix}\cos{(s)} & -\sin{(s)} \\ \sin{(s)} & \cos{(s)}\end{pmatrix}\in\SO(2).\]
Note that $J_0=\mathcal{R}(\pi/2)$. For $p\in\text{Crit}(f)\subset S^2$, pick coordinates $\psi:\R^2\to S^2$, so that $\psi(0,0)=p$. Then we let $H(f,\psi)$ denote the Hessian of $f$ in these coordinates at $p$.

\begin{notation}\label{notation: stereographic}
The term \emph{stereographic coordinates at $p\in S^2$} describes a smooth $\psi:\R^2\to S^2$ with $\psi(0,0)=p$, which has a factorization $\psi=\psi_1\circ\psi_0$, where $\psi_0:\R^2\to S^2$ is the map 
\[
(x,y)\mapsto\frac{1}{1+x^2+y^2}(2x,2y,1-x^2-y^2),
\]
 taking $(0,0)$ to $(0,0,1)$, and $\psi_1:S^2\to S^2$ is given by the action of some element of $\SO(3)$ taking $(0,0,1)$ to $p$. If $\psi$ and $\psi'$ are both stereographic coordinates at $p$, then they differ by a precomposition with some $\mathcal{R}(s)$ in $\SO(2)$. Note that $\psi^*\omega_{\text{FS}}=\frac{dx\wedge dy}{(1+x^2+y^2)^2}$ (\cite[Ex. 4.3.4]{MS}). 
\end{notation}

Lemma \ref{lemma: rotate} describes the linearized Reeb flow of the unperturbed $\lambda$ with respect to $\tau$.

\begin{lemma} \label{lemma: rotate}
For any $z\in S^3$, the linearization $d\phi_0^t|_{\xi_z}:\xi_z\to\xi_{e^{it}\cdot z}$ is represented by $\mathcal{R}(2t)$, with respect to ordered bases $(V_1(z),V_2(z))$ of $\xi_z$ and $(V_1(e^{it}\cdot z), V_2(e^{it}\cdot z))$ of $\xi_{e^{it}\cdot z}$.
\end{lemma}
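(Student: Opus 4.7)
The Reeb flow is scalar multiplication $\phi_0^t(z)=e^{it}z$ restricted from $\C^2$ to $S^3$. I would exploit the group-theoretic structure: under the identification $S^3\cong\SU(2)$ from \eqref{equation: S^3 Lie}, this $S^1$-action translates into right-multiplication by the diagonal element $D_t:=\begin{pmatrix}e^{it}&0\\0&e^{-it}\end{pmatrix}\in\SU(2)$, so $\phi_0^t=R_{D_t}$, whereas $V_1,V_2$ are left-invariant by construction. Since left and right translations commute, the identity $dR_{D_t}|_A\circ dL_A|_e=dL_A|_{D_t}\circ dR_{D_t}|_e$ lets me push a left-invariant field forward by $R_{D_t}$ by first applying the adjoint $\mathrm{Ad}(D_t^{-1})$ at the identity and then left-translating to $AD_t$.

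The first concrete step is to identify $V_1(e),V_2(e)\in\mathfrak{su}(2)$ by differentiating \eqref{equation: S^3 Lie} along $\partial_{x_2}|_e=\partial_c$ and $\partial_{y_2}|_e=\partial_d$, tracking the sign in the $-\overline{\beta}$ entry:
\[V_1(e)\leftrightarrow\begin{pmatrix}0&-1\\1&0\end{pmatrix},\qquad V_2(e)\leftrightarrow\begin{pmatrix}0&i\\i&0\end{pmatrix}.\]
Both matrices are purely off-diagonal, which is what produces the factor of $2$ in $\mathcal{R}(2t)$ at the next step.

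Conjugating each $V_j(e)$ by $D_t$ multiplies the $(1,2)$-entry by $e^{-2it}$ and the $(2,1)$-entry by $e^{2it}$—the two $e^{\pm it}$ factors combine rather than cancel precisely because the entries are off-diagonal. Expanding via Euler and collecting terms gives
\[D_t^{-1}V_1(e)D_t=\cos(2t)V_1(e)+\sin(2t)V_2(e),\quad D_t^{-1}V_2(e)D_t=-\sin(2t)V_1(e)+\cos(2t)V_2(e).\]
Left-translating by $A$, using $dL_A(V_j(D_t))=V_j(AD_t)$ and $AD_t\leftrightarrow e^{it}\cdot z$, then shows that $d\phi_0^t|_z$ is represented in the specified bases by $\mathcal{R}(2t)$.

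The only real obstacle is careful bookkeeping—in particular, extracting $V_j(e)$ from \eqref{equation: S^3 Lie} with the correct signs and confirming that $\mathrm{Ad}(D_t^{-1})$ preserves the two-plane $\mathrm{Span}(V_1(e),V_2(e))\subset\xi_e\subset T_e\SU(2)$ (which it does, as off-diagonal matrices remain off-diagonal under diagonal conjugation). As a sanity check, one can bypass Lie theory: $d\phi_0^t$ acts on $T\C^2$ as $\mathcal{R}(t)$ on each coordinate $\C$-plane, so substituting $(a,b,c,d)\mapsto(a\cos t-b\sin t,\,a\sin t+b\cos t,\,c\cos t-d\sin t,\,c\sin t+d\cos t)$ into \eqref{equation: vector field coordinates} for $V_j(\phi_0^t z)$ and comparing directly with $d\phi_0^t V_j(z)$ recovers the result via the identities $\cos 2t\cos t+\sin 2t\sin t=\cos t$ and $\sin 2t\cos t-\cos 2t\sin t=\sin t$.
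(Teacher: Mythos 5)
Your proposal is correct, and its core computation takes a genuinely different route from the paper's. The reduction step is shared: the paper reduces to $z=e$ by noting that $V_1,V_2$ are $\SU(2)$-invariant and that the left $\SU(2)$-action commutes with the Hopf flow, which is exactly the content of your identity $dR_{D_t}(V_j(A))=dL_{AD_t}\bigl(\mathrm{Ad}(D_t^{-1})V_j(e)\bigr)$. Where you diverge is in extracting the rotation: the paper writes $V_1(e^{it},0)$ and $V_2(e^{it},0)$ in the $\R^4$ coordinates of \eqref{equation: vector field coordinates}, applies angle-sum identities to verify the first column of $\mathcal{R}(2t)$, and then obtains the second column by applying $J_{\C^2}$; you instead identify the Hopf flow with right translation by $D_t=\mathrm{Diag}(e^{it},e^{-it})$ and compute $\mathrm{Ad}(D_t^{-1})$ on the off-diagonal matrices $V_1(e),V_2(e)\in\mathfrak{su}(2)$ (which I have checked: your matrices for $V_j(e)$, the $e^{\mp 2it}$ scaling of the off-diagonal entries, and the resulting coefficients $\cos 2t,\pm\sin 2t$ are all right). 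Your route gets both columns at once and makes the factor of $2$ conceptually transparent—it is the weight of the adjoint action of the diagonal torus on the off-diagonal part of $\mathfrak{su}(2)$—at the cost of a bit more Lie-theoretic setup; the paper's route is more elementary and leans on the explicit coordinate formulas already recorded. Your ``sanity check'' is essentially the paper's computation carried out at a general point $z$ rather than at the identity, and the trigonometric identities you cite do close it. No gaps.
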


\begin{proof}
Since the  $V_i$ are $\SU(2)$-invariant and the $\SU(2)$-action commutes with $\phi_0^t$, we may reduce to the case $z=e=(1,0)\in S^3$. That is, we must show
for all $t$ values that
\begin{align}
    (d\phi_0^t)_{(1,0)}V_1(1,0)&=\cos{(2t)}V_1(e^{it},0)+\sin{(2t)}V_2(e^{it},0) \label{equation: eq1}\\
    (d\phi_0^t)_{(1,0)}V_2(1,0)&=-\sin{(2t)}V_1(e^{it},0)+\cos{(2t)}V_2(e^{it},0). \label{equation: eq2}
\end{align}
Note that \eqref{equation: eq2}  follows from  \eqref{equation: eq1} by applying the endomorphism $J_{\C^2}$ to both sides of \eqref{equation: eq1}, and noting both that $d\phi_0^t$ commutes with $J_{\C^2}$, and that $J_{\C^2}(V_1)=V_2$. We now prove \eqref{equation: eq1}.

The coordinate descriptions \eqref{equation: vector field coordinates} tell us that $V_1(e^{it},0)$ and $V_2(e^{it},0)$ can be respectively written as $\langle 0,0,\cos{t},-\sin{t}\rangle$ and $\langle 0,0,\sin{t},\cos{t}\rangle$. Angle sum formulas now imply \[\cos{(2t)}V_1(e^{it},0)+\sin{(2t)}V_2(e^{it},0)=\langle 0, 0, \cos{t},\sin{t}\rangle.\]
The vector on the right is precisely $(d\phi_0^t)_{(1,0)}V_1(1,0)$, and so we have proven \eqref{equation: eq1}.
\end{proof}

Proposition \ref{proposition: flow} and Corollary \ref{corollary: CZ sphere} conclude our discussion of dynamics on $S^3$.

\begin{proposition}\label{proposition: flow}
Fix a critical point $p$ of $f$ in $S^2$ and stereographic coordinates $\psi:\R^2\to S^2$ at $p$, and suppose $\gamma\in\mathcal{P}(\lambda_{\varepsilon})$ projects to $p$ under $\fP$. Let $M_t\in\mbox{\em Sp}(2)$ denote $d\phi^t|_{\xi_{\gamma(0)}}:\xi_{\gamma(0)}\to\xi_{\gamma(t)}$ with respect to the trivialization $\tau$. Then $M_t$ is a conjugate of the matrix  \[\mathcal{R}\bigg(\frac{2t}{f_{\varepsilon}(p)}\bigg)\cdot\exp\bigg(\frac{-t\varepsilon}{f_{\varepsilon}(p)^2} J_0\cdot H(f,\psi)\bigg)\] by some  element of $\mbox{\em SO}(2)$, which is independent of $t$.
\end{proposition}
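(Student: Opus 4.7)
The plan is to factor $M_t$ as a rotation coming from the twisting of the trivialization $\tau$ along the Hopf fiber $\fP^{-1}(p)$, composed with the linearized Hamiltonian flow on $S^2$ transported back to $\xi$ via $d\fP$. The key tools are Lemmas \ref{lemma: flow} and \ref{lemma: rotate}: the former supplies $\fP\circ\phi^t=\varphi^t\circ\fP$, so that $d\fP|_{\xi}$ intertwines $d\phi^t|_{\xi}$ with $d\varphi^t$; the latter describes how the unitary trivialization $(V_1,V_2)$ twists relative to a $d\fP$-identification as one travels along the Hopf fiber.

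First I would set $e_i(w):=d\fP(V_i(w))\in T_pS^2$ for $w\in\fP^{-1}(p)$. Since $\fP\circ\phi_0^s=\fP$, applying $d\fP$ to \eqref{equation: eq1}--\eqref{equation: eq2} and inverting yields
\[
\begin{pmatrix} e_1(e^{is}z) \\ e_2(e^{is}z) \end{pmatrix}=\mathcal{R}(2s)\begin{pmatrix} e_1(z) \\ e_2(z) \end{pmatrix}.
\]
Setting $s=t/f_{\varepsilon}(p)$, the identification $d\fP|_{\xi_{\gamma_p(t)}}$ from $\xi_{\gamma_p(t)}$ to $T_pS^2$ differs from $d\fP|_{\xi_z}$ by a rotation of angle $2t/f_{\varepsilon}(p)$ on $T_pS^2$. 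Writing $N_t\in\Sp(2)$ for the matrix of $d\varphi^t|_p$ in the basis $(e_1(z),e_2(z))$, expanding the identity $d\fP\circ d\phi^t=d\varphi^t\circ d\fP$ in the appropriate bases then gives $M_t=\mathcal{R}(2t/f_{\varepsilon}(p))\cdot N_t$.

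Next I would compute $N_t$. Since $\psi^*\omega_{\FS}|_0=dx\wedge dy$, $X_f(p)=0$, and $df_{\varepsilon}(p)=\varepsilon df(p)=0$, a standard Jacobian computation for a Hamiltonian vector field at a critical point shows that the linearization at $0\in\R^2$ of $\psi^*(-\varepsilon X_f/f_{\varepsilon}^2)$ equals $-(\varepsilon/f_{\varepsilon}(p)^2)J_0H(f,\psi)$. Consequently $d\varphi^t|_p$ in the stereographic basis $(\psi_*\partial_x,\psi_*\partial_y)$ equals $\exp\bigl(-(t\varepsilon/f_{\varepsilon}(p)^2)J_0H(f,\psi)\bigr)$. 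Both $(\psi_*\partial_x,\psi_*\partial_y)$ and $(e_1(z),e_2(z))$ are oriented $\omega_{\FS}$-symplectic and $j$-complex bases of $T_pS^2$ (the latter because $d\fP|_{\xi}$ is $\C$-linear with respect to $J_{\C^2}$ and $j$), so they differ by a change of basis $S\in\SO(2)$ that is independent of $t$, and hence $N_t=S\cdot\exp\bigl(-(t\varepsilon/f_{\varepsilon}(p)^2)J_0H(f,\psi)\bigr)\cdot S^{-1}$.

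Finally, since $\SO(2)$ is abelian, $S$ commutes with $\mathcal{R}(2t/f_{\varepsilon}(p))$, giving
\[
M_t=\mathcal{R}\bigl(2t/f_{\varepsilon}(p)\bigr)\cdot S\exp(\cdots)S^{-1}=S\cdot\left[\mathcal{R}\bigl(2t/f_{\varepsilon}(p)\bigr)\cdot\exp\left(-\tfrac{t\varepsilon}{f_{\varepsilon}(p)^2}J_0H(f,\psi)\right)\right]\cdot S^{-1},
\]
the desired conjugation by an $\SO(2)$ element independent of $t$. The main obstacle is the bookkeeping in the first step: isolating the fiber-rotation factor $\mathcal{R}(2t/f_{\varepsilon}(p))$ from $M_t$ requires careful handling of the $t$-dependent identification of $\xi_{\gamma_p(t)}$ with $T_pS^2$ via $d\fP$ along the Hopf fiber, so as to cleanly separate the \emph{geometric} rotation coming from that fiber from the \emph{dynamical} contribution $\exp(\cdots)$ living inside $N_t$; the Hamiltonian computation in the second step is routine once stereographic coordinates have been fixed.
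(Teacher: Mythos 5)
Your proposal is correct and follows essentially the same route as the paper: linearizing the intertwining identity $\fP\circ\phi^t=\varphi^t\circ\fP$ to split $M_t$ into the fiber-rotation factor $\mathcal{R}(2t/f_{\varepsilon}(p))$ (via Lemma \ref{lemma: rotate} and Lemma \ref{lemma: orbits}) and the linearized Hamiltonian flow computed in stereographic coordinates, then absorbing the change of basis into an $\SO(2)$-conjugation using commutativity. The only cosmetic difference is that the paper writes the change-of-basis matrix as $r\cdot\mathcal{R}(s)$ with the scalar cancelling under conjugation, whereas you normalize it directly into $\SO(2)$.
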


\begin{proof}
Let $z:=\gamma(0)$. We linearize the identity $\fP\circ\phi^t=\varphi^t\circ\fP$ from Lemma \ref{lemma: flow}, restrict to $\xi_z$, and rearrange to recover the equality $d\phi^t|_{\xi_z}=a\circ b\circ c:\xi_z\to\xi_{\phi^t(z)}$, where \[a=\left(d\fP|_{\xi_{\phi^t(z)}}\right)^{-1}:T_pS^2\to\xi_{\phi^t(z)},\,\, b=d\varphi^t_p:T_pS^2\to T_pS^2,\,\,\text{and}\,\, c=d\fP|_{\xi_z}:\xi_z\to T_pS^2.\]

Let $v_i:=d\fP (V_i(z))\in T_pS^2$, then $(v_1,v_2)$ and $(V_1,V_2)$ provide an oriented basis of each of the three vector spaces appearing in the above composition of linear maps. Let $A$, $B$, and $C$ denote the matrix representations of $a$, $b$, and $c$ with respect to these ordered bases. We have $M_t=A\cdot B\cdot C$. Note that $C=\text{Id}$. We compute $A$ and $B$:

To compute $A$, recall that $\phi^t_0:S^3\to S^3$  denotes the time $t$ flow of the unperturbed Reeb field (alternatively, the Hopf action). Linearize the equality $\fP\circ\phi^t_0=\fP$, then use $\phi_0^t(z)=\phi^{t/f_{\varepsilon}(p)}(z)$ from Lemma \ref{lemma: orbits} and  Lemma \ref{lemma: rotate} to find that
\begin{equation}\label{equation A_t}A=\mathcal{R}(2t/f_{\varepsilon}(p)).\end{equation}

To compute $B$, note that $\varphi^t$ is the Hamiltonian flow of the function $1/f_{\varepsilon}$ with respect to $\omega_{\FS}$. It is advantageous to study this flow using our stereographic coordinates: recall that $\psi$ defines a symplectomorphism $\left(\R^2,\frac{dx\wedge dy}{(1+x^2+y^2)^2}\right)\to(S^2\setminus\{p'\},\omega_{\FS})$, where $p'\in S^2$ is antipodal to $p$ in $S^2$ (see Notation \ref{notation: stereographic}). Because symplectomorphisms preserve Hamiltonian data, we have that $\psi^{-1}\circ\varphi^t\circ\psi:\R^2\to\R^2$ is  given near the origin as the time $t$ flow of the Hamiltonian vector field for $\psi^*(1/f_{\varepsilon})$ with respect to $\frac{dx\wedge dy}{(1+x^2+y^2)^2}$. That is, \[\psi^{-1}\circ\varphi^t\circ\psi\,\,\,\, \text{is the time $t$ flow of} \,\,\,\,\frac{\varepsilon (1+x^2+y^2)^2f_y}{f_{\varepsilon}^2}\partial_x-\frac{\varepsilon (1+x^2+y^2)^2f_x}{f_{\varepsilon}^2}\partial_y.\]
Recall that if $P\partial_x+Q\partial_y$ is a smooth vector field on $\R^2$, vanishing at $(0,0)$, then the linearization of its time $t$ flow evaluated at the origin is represented by the $2\times2$ matrix $\exp(t X)$, with respect to the standard ordered basis $(\partial_x,\partial_y)$ of $T_{(0,0)}\R^2$, where \[X=\begin{pmatrix}P_x & P_y \\ Q_x & Q_y \end{pmatrix}.\]
Here, the partial derivatives of $P$ and $Q$ are implicitly assumed to be evaluated at the origin. In this spirit, set $P=\frac{\varepsilon (1+x^2+y^2)^2f_y}{f_{\varepsilon}^2}$ and $Q=\frac{-\varepsilon (1+x^2+y^2)^2f_x}{f_{\varepsilon}^2}$, and we compute \[X=\begin{pmatrix}P_x & P_y \\ Q_x & Q_y \end{pmatrix}=\frac{\varepsilon}{f_{\epsilon}(p)^2}\begin{pmatrix}f_{yx}(0,0) & f_{yy}(0,0) \\ -f_{xx}(0,0) & -f_{xy}(0,0)\end{pmatrix}=\frac{-\varepsilon}{f_{\varepsilon}(p)^2}J_0\cdot H(f,\psi).\]
This implies that $B=D^{-1}\exp\big(\frac{-t\varepsilon}{f_{\varepsilon}(p)^2}J_0 H(f,\psi)\big)D$, where $D$ is a change of basis matrix relating $(v_1,v_2)$ and the pushforward of $(\partial_x,\partial_y)$ by $\psi$ in $T_pS^2$. Because $\psi$ is holomorphic, $D$ must equal $r\cdot\mathcal{R}(s)$ for some $r>0$ and some $s\in\R$. This provides that \begin{equation}\label{equation: B_t}B=\mathcal{R}(-s)\exp\bigg(\frac{-t\varepsilon}{f_{\varepsilon}(p)^2}J_0 H(f,\psi)\bigg)\mathcal{R}(s).\end{equation}

Finally, we combine \eqref{equation A_t} and \eqref{equation: B_t} to conclude
\begin{align*}
    M_t=A\cdot B&=\mathcal{R}\bigg(\frac{2t}{f_{\varepsilon}(p)}\bigg)\cdot \mathcal{R}(-s)\exp\bigg(\frac{-t\varepsilon}{f_{\varepsilon}(p)^2}J_0 H(f,\psi)\bigg) \mathcal{R}(s) \\
    &=\mathcal{R}(-s)\mathcal{R}\bigg(\frac{2t}{f_{\varepsilon}(p)}\bigg)\exp\bigg(\frac{-t\varepsilon}{f_{\varepsilon}(p)^2}J_0 H(f,\psi)\bigg)\mathcal{R}(s).
\end{align*}
\end{proof}

\begin{corollary}\label{corollary: CZ sphere}
Fix $L>0$. Then there exists some $\varepsilon_0>0$ such that for $\varepsilon\in(0,\varepsilon_0]$, all Reeb orbits $\gamma\in\mathcal{P}^L(\lambda_{\varepsilon})$ are nondegenerate, take the form $\gamma_p^k$, a $k$-fold cover of an embedded Reeb orbit $\gamma_p$ as in in Lemma \ref{lemma: orbits}, and $\mu_{\CZ}(\gamma)=4k+\mbox{\em ind}_{f}(p)-1$
for $p\in\mbox{\em Crit}(f)$.
\end{corollary}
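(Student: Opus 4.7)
My plan is to prove Corollary \ref{corollary: CZ sphere} in three stages: identify the orbits, verify nondegeneracy, and compute the Conley-Zehnder index.

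First I would show that for sufficiently small $\varepsilon$, every $\gamma \in \mathcal{P}^L(\lambda_\varepsilon)$ satisfies $\fP \circ \gamma \equiv p$ for some $p \in \text{Crit}(f)$. By Lemma \ref{lemma: flow}, $\fP \circ \gamma$ is a closed orbit of $V_\varepsilon := -\varepsilon X_f/f_\varepsilon^2$ of period at most $\mathcal{A}(\gamma) < L$. The key observation is that $V_\varepsilon$ is $-X_f$ rescaled by the factor $\varepsilon/f_\varepsilon^2$, which is \emph{constant along each integral curve} since $f$ is preserved by the Hamiltonian flow. Hence the period of a non-constant closed orbit of $V_\varepsilon$ equals $f_\varepsilon(c)^2/\varepsilon$ times the period of the corresponding orbit of $X_f$. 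To finish the argument I would show that non-constant closed orbits of $X_f$ on $S^2$ have periods uniformly bounded below by some $T_0 > 0$: on regular level sets of $f$ bounded away from the critical values, this follows from compactness and the bound $|X_f| \leq C$; near an elliptic critical point the periods converge to the nonzero linearized period; and near a saddle the orbits slow down and their periods diverge. Thus non-constant closed orbits of $V_\varepsilon$ have period exceeding $L$ once $\varepsilon$ is small. Along the fiber $\fP^{-1}(p)$ the vector field $\widetilde{X_f}$ vanishes, so Lemma \ref{lemma: reeb1} gives $R_{\lambda_\varepsilon} = R_\lambda/f_\varepsilon(p)$, forcing $\gamma = \gamma_p^k$ for some $k$, with $k \leq L/(2\pi f_\varepsilon(p))$ by the action bound.

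For nondegeneracy and the index I would invoke Proposition \ref{proposition: flow} with $T = 2\pi k f_\varepsilon(p)$. Since $\mathcal{R}(2T/f_\varepsilon(p)) = \mathcal{R}(4\pi k) = \text{Id}$, the linearized return map is conjugate to
\[\exp\Bigl(-\tfrac{2\pi k \varepsilon}{f_\varepsilon(p)} J_0 H(f,\psi)\Bigr).\]
As $H(f,\psi)$ is nondegenerate and the scalar coefficient is small with $k$ bounded, the exponent is small with nonzero eigenvalues (purely imaginary when $\det H > 0$, real when $\det H < 0$), so $1$ is not in the spectrum. For the index I would study the full path $s \mapsto \mathcal{R}(4\pi k s) \exp(-2\pi k s \varepsilon/f_\varepsilon(p) \cdot J_0 H)$ for $s \in [0,1]$: the first factor contributes $2k$ full rotations, and the second a small perturbation whose sign is set by $H$. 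Using $H = cI$ with $c>0$ as a model minimum, $\exp(-As \cdot cJ_0) = \mathcal{R}(-Asc)$ with $A = 2\pi k \varepsilon/f_\varepsilon(p) > 0$, yielding total rotation $4\pi k - \delta$ at $s=1$ and hence $\mu_{\CZ} = 4k-1$. A maximum flips the sign of the perturbation to give $4\pi k + \delta$ and $\mu_{\CZ} = 4k+1$; a saddle makes the exponent hyperbolic, giving a positive hyperbolic return map with $\mu_{\CZ} = 4k$. These three cases combine into $\mu_{\CZ}(\gamma_p^k) = 4k + \text{ind}_f(p) - 1$.

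The main obstacle is the first stage: ensuring no spurious low-action Reeb orbits exist. The uniform period lower bound for non-constant closed orbits of $X_f$, controlled simultaneously over all level sets, is the only genuinely analytic input. Once in hand, the rest reduces to algebra with the formula from Proposition \ref{proposition: flow}, with the CZ sign conventions being the only remaining subtle point and being dictated by the convention $\iota_{X_f}\omega_{\FS} = -df$.
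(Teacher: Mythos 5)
Your proposal is correct, and the Conley--Zehnder computation is essentially the paper's: both factor the path $M_t$ from Proposition \ref{proposition: flow} (up to $\SO(2)$-conjugation, which is harmless) into the loop $\mathcal{R}(2t/f_{\varepsilon}(p))$, contributing $2\cdot 2k$, and the small exponential path $\exp(-\tfrac{t\varepsilon}{f_\varepsilon(p)^2}J_0H)$, contributing $\mathrm{ind}_f(p)-1$; you get the second contribution by explicit rotation numbers in the model cases $H=\pm cI$ and $H$ indefinite, whereas the paper quotes the loop and signature properties of $\mu_{\CZ}$ from \cite{S}, which also dispenses in one line with the reduction from a general nondegenerate Hessian to your three models (a step you should make explicit, e.g.\ by homotopy invariance or the signature property, though it costs nothing). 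The genuine difference is in the first stage: the paper simply cites \cite[Lemma 4.11]{N2} for the statement that all $\gamma\in\mathcal{P}^L(\lambda_\varepsilon)$ are nondegenerate and project to $\mathrm{Crit}(f)$, while you reprove it directly, observing that $\fP\circ\gamma$ is a closed orbit of $-\varepsilon X_f/f_\varepsilon^2$, that $f_\varepsilon$ is constant along such orbits so their periods are those of $X_f$ rescaled by $f_\varepsilon^2/\varepsilon$, and that nonconstant closed orbits of $X_f$ on $S^2$ have a uniform positive period lower bound (your case analysis near extrema, near saddles, and on the compact remainder is the right argument, with the minor additional remark that short regular level circles bound small disks containing critical points and hence cannot occur away from critical values). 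This buys self-containedness and makes the dependence of $\varepsilon_0$ on $L$ transparent, at the cost of redoing the analytic input that the paper deliberately outsources; your nondegeneracy check via the eigenvalues of $\exp(-\tfrac{2\pi k\varepsilon}{f_\varepsilon(p)}J_0H)$, with $k$ bounded in terms of $L$, is likewise a correct unpacking of the cited lemma.
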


\begin{proof}
That $\gamma$ is nondegenerate and projects to a critical point $p$ of $f$ is proven in \cite[Lemma 4.11]{N2}. To compute $\mu_{\CZ}(\gamma)$, we apply the naturality, loop, and signature properties of the Conley Zehnder index (see \cite[\S 2.4]{S}) to our path $\{M_t\}\subset\Sp(2)$. By Proposition \ref{proposition: flow}, this family of matrices has a factorization, up to $\SO(2)$-conjugation,  $M_t=\Phi_t\Psi_t$, where $\Phi$ is the loop of symplectic matrices $\R/2\pi kf_{\varepsilon}(p)\Z\to\Sp(2)$,  $t\mapsto\mathcal{R}(2t/f_{\varepsilon}(p))$, and $\Psi$ is the path of matrix exponentials $t\mapsto \exp(\frac{-t\varepsilon}{f_{\varepsilon}(p)^2}J_0 H(f,\psi))$, where $\psi$ denotes a choice of stereographic coordinates at $p$. In total,
\begin{align*}
    \mu_{\CZ}(\gamma_p^k)&=2\mu(\Phi)+\mu_{\CZ}(\Psi)\\
    &=2\cdot 2k+\frac{1}{2}\text{sign}(-H(f,\psi))\\
    &=4k+\text{ind}_f(p)-1.
\end{align*}
Here, $\mu$ denotes the Maslov index of a loop of symplectic matrices (see \cite[\S 2]{MS}).
\end{proof}

\subsection{Geometry of $S^3/G$ and associated Reeb dynamics} \label{subsection: sfs geometry}
The previous process of perturbing a degenerate contact form on prequantization bundles, is often used to compute Floer theories, for example, their cylindrical contact homology \cite{N2} and embedded contact homology \cite{NW}. Although the quotients $S^3/G$ are not prequantization bundles, they do admit an $S^1$-action (with fixed points), and are examples of {Seifert fiber spaces} which are realizable as principal $S^1$-orbibundles over integral symplectic orbifolds.

Let $G\subset\SU(2)$ be a finite nontrivial group. Since $G$ acts on $S^3$ without fixed points, $S^3/G$ inherits smooth structure. The quotient $\pi_G:S^3\to S^3/G$ is a universal cover, thus $\pi_1(S^3/G)\cong G$ is completely torsion, and  $\mbox{rank } H_1(S^3/G)=0$. Because the $G$-action preserves $\lambda\in\Omega^1(S^3)$, we have a descent of $\lambda$ to a contact form on $S^3/G$, denoted $\lambda_G\in\Omega^1(S^3/G)$, with $\xi_G:=\text{ker}(\lambda_G)$. As the actions of $S^1$ and $G$ on $S^3$ commute, we obtain an $S^1$-action on $S^3/G$, which realizes the Reeb flow of $\lambda_G$. Hence, $\lambda_G$ is degenerate.

Let $H\subset\SO(3)$ denote $P(G)$, the image of $G$ under $P:\SU(2)\to \SO(3)$. The $H$-action on $S^2$ has fixed points, and so the quotient $S^2/H$ inherits \emph{orbifold} structure. Lemma \ref{lemma: commutes} provides a unique map $\fp:S^3/G\to S^2/H$, making the following diagram commute

\begin{equation}\label{diagram: commuting square}\begin{tikzcd}
S^3 \arrow{r}{\pi_G} \arrow{d}{\fP}
& S^3/G \arrow{d}{\fp} \\
S^2 \arrow{r}{\pi_H}
& S^2/H
\end{tikzcd}
\end{equation}
where $\pi_G$ is a finite cover, $\pi_H$ is an orbifold cover, $\fP$ is a projection of a prequantization bundle, and $\fp$ is identified with the Seifert fibration.

\begin{remark}\label{rem:triv}
(Global trivialization of $\xi_G$). Recall the $\SU(2)$-invariant vector fields $V_i$ spanning $\xi$ on $S^3$ \eqref{equation: global trivialization}. Because these $V_i$ are $G$-invariant, they descend to smooth sections of $\xi_G$, providing a global unitary trivialization, $\tau_G$, of $\xi_G$, hence $c_1(\xi_G)=0$. Given a Reeb orbit $\gamma$ of some contact form on $S^3/G$, we denote by $\mu_{\CZ}(\gamma)$ the Conley Zehnder index of $\gamma$ with respect to this global trivialization.  
\end{remark}

We assume that the Morse function $f:S^2\to\R$ is $H$-invariant and descends to an orbifold Morse function, $f_H:S^2/H\to\R$, in the language of \cite{CH}. The $H$-invariance of $f$ provides that the smooth $F=f\circ\fP$ is $G$-invariant, and descends to a smooth function, $F_G:S^3/G\to\R$. We define, analogously to Notation \ref{notation: morse data}, \[f_{H,\varepsilon}:=1+\varepsilon f_H,\hspace{1.5cm} F_{G,\varepsilon}:=1+\varepsilon F_G,\hspace{1.5cm}\lambda_{G,\varepsilon}:=F_{G,\varepsilon}\lambda_G.\] For sufficiently small $\varepsilon$, $\lambda_{G,\varepsilon}$ is a contact form on $S^3/G$ with kernel $\xi_G$. The condition $\pi_G^*\lambda_{G,\varepsilon}=\lambda_{\varepsilon}$ implies that $\gamma:[0,T]\to S^3$ is an integral curve of $R_{\lambda_{\varepsilon}}$ if and only if $\pi_G\circ\gamma:[0,T]\to S^3/G$ is an integral curve of $R_{\lambda_{G,\varepsilon}}$.

\begin{remark}\label{remark: same local models} 
(Local models on $S^3$ and $S^3/G$ agree). Suppose $\gamma:[0,T]\to S^3$ is a Reeb trajectory of  $\lambda_{\varepsilon}$, so that $\pi_G\circ\gamma:[0,T]\to S^3/G$ is a Reeb trajectory of  $\lambda_{G,  \varepsilon}$. For $t\in[0,T]$, let $M_t\in\Sp(2)$ denote the time $t$ linearized Reeb flow of $\lambda_{\varepsilon}$ along $\gamma$ with respect to $\tau$, and let $N_t\in\Sp(2)$ denote that of $\lambda_{G,\varepsilon}$ along $\pi_G\circ\gamma$ with respect to $\tau_G$. Then $M_t=N_t$, because the local contactomorphism $\pi_G$ preserves the trivializations in addition to the contact forms.
\end{remark}

Let $\mathcal{O}(p):=\{h\cdot p \ |\,h\in H\}$ to be the \emph{orbit} of $p$, and denote the \emph{isotropy subgroup} of $p$ by
\[
H_p : = \{h\in H\ |\, h\cdot p=p\}\subset H.
\]
 Recall that $|\mathcal{O}(p)||H_p|=|H|$ for any $p \in S^2$. A point $p\in S^2$ is a \emph{fixed point} if $|H_p|>1$. The set of fixed points of $H$ is $\text{Fix}(H)$. The point  $q\in S^2/H$ is an \emph{orbifold point} if $q=\pi_H(p)$ for some $p\in\text{Fix}(H)$. We now additionally assume that $f$ satisfies $\text{Crit}(f)=\text{Fix}(H)$; this will be the case in  Section \ref{section: computation of filtered contact homology}. The Reeb orbit $\gamma_p\in\mathcal{P}(\lambda_{\varepsilon})$ from Lemma \ref{lemma: orbits} projects to $p\in\text{Crit}(f)$ under $\fP$, and thus $\pi_G\circ\gamma_p\in\mathcal{P}(\lambda_{G,\varepsilon})$ projects to the orbifold point $\pi_H(p)$ under $\fp$. Lemma \ref{lemma: covering multiplicity} computes the Reeb orbit multiplicity $d(\pi_G\circ\gamma_p)$.

\begin{lemma}\label{lemma: covering multiplicity}
Let $\gamma_p\in\mathcal{P}(\lambda_{\varepsilon})$ be the embedded Reeb orbit in $S^3$ from Lemma \ref{lemma: orbits}. Then the multiplicity of $\pi_G\circ\gamma_p\in\mathcal{P}(\lambda_{G,\varepsilon})$  is $2|H_p|$ if $|G|$ is even, and is $|H_p|$ if $|G|$ is odd.
\end{lemma}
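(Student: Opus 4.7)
The plan is to compute the minimal period $T_0 > 0$ of $\pi_G \circ \gamma_p$ viewed as a Reeb orbit in $S^3/G$; then the covering multiplicity is $m(\pi_G \circ \gamma_p) = 2\pi f_\varepsilon(p)/T_0$, since $\pi_G \circ \gamma_p$ has (non-minimal) period $2\pi f_\varepsilon(p)$. The condition $\pi_G(\gamma_p(T_0)) = \pi_G(z)$ amounts to $\gamma_p(T_0) = g \cdot z$ for some $g \in G$. Under the identification \eqref{equation: S^3 Lie}, the Hopf $S^1$-action on $S^3 \cong \SU(2)$ is right multiplication by $D_\theta := \text{diag}(e^{i\theta}, e^{-i\theta})$ (a direct check from \eqref{equation: S^3 Lie}), while the $G$-action is left multiplication. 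Writing $\theta := T_0/f_\varepsilon(p)$, so that $\gamma_p(T_0) = z \cdot D_\theta$, the condition becomes $g = z D_\theta z^{-1} \in G$, and I must identify the smallest positive $\theta$ for which this holds.

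Next I would analyze this condition via the double cover $P$. The set $T_p := z\{D_\theta\}_{\theta\in\R}z^{-1}$ is a maximal torus of $\SU(2)$, and $P(D_\theta)$ is rotation by $2\theta$ about the first coordinate axis of $\R^3$. By Lemma \ref{lemma: commutes} together with \eqref{equation: sphere cp1 diffeomorphism}, $P(z)$ sends this axis to the axis through $p$, so $P|_{T_p}$ is the $2$-to-$1$ covering of the subgroup of $\SO(3)$ of rotations fixing the axis through $p$. Intersecting with $P^{-1}(H)$, the group $P^{-1}(H_p) \cap T_p$ is therefore cyclic of order $2n$, where $n := |H_p|$, generated by $z D_{\pi/n} z^{-1}$.

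I would then split on the parity of $|G|$, using that $-\text{Id}$ is the unique element of order $2$ in $\SU(2)$, so by Cauchy's theorem $-\text{Id} \in G$ iff $|G|$ is even. If $|G|$ is even, $-\text{Id} \in G$ forces $P^{-1}(H) = G$, so $G \cap T_p = P^{-1}(H_p) \cap T_p$ is the full cyclic group of order $2n$; the minimal positive $\theta$ is $\pi/n$, giving $T_0 = \pi f_\varepsilon(p)/n$ and $m(\pi_G \circ \gamma_p) = 2n = 2|H_p|$. If $|G|$ is odd, then $P|_G : G \to H$ is injective (kernel has order dividing both $2$ and $|G|$) and hence bijective, so the restriction $G \cap T_p \to H_p$ is also a bijection, making $G \cap T_p$ the unique subgroup of order $n$ inside the cyclic group $P^{-1}(H_p) \cap T_p$ of order $2n$. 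Since $n$ divides the odd $|G|$ and so is odd, this index-$2$ subgroup is the set of squares $\{z D_{2k\pi/n} z^{-1} : 0 \le k < n\}$, automatically excluding $-\text{Id} = z D_\pi z^{-1}$ as required. The minimal positive $\theta$ is then $2\pi/n$, giving $m(\pi_G \circ \gamma_p) = n = |H_p|$. The only delicate point is the odd-case identification of $G \cap T_p$ as the squares subgroup, which is elementary cyclic group theory.
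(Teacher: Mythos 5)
Your argument is correct, but it takes a genuinely different route from the paper. The paper's proof is a covering-space count: it observes that $\pi_G^{-1}(\fp^{-1}(q))$ is the disjoint union of the $d=|\mathcal{O}(p)|$ Hopf fibers over the $H$-orbit of $p$, that $\pi_G$ restricted to this union is a $|G|$-fold cover of the single embedded circle $C=\fp^{-1}(q)$, and that $G$ permutes the fibers transitively so each covers $C$ with the same degree $n$; hence $dn=|G|=rd|H_p|$ and $n=r|H_p|$, with no explicit group computation beyond $|G|=r|H|$. You instead compute the minimal period directly by identifying the return condition with membership of $zD_\theta z^{-1}$ in $G$, i.e.\ by determining the stabilizer $G\cap T_p$ of the Hopf fiber inside the maximal torus $T_p$, using the double cover $P$ and elementary cyclic-group theory. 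Your computation checks out (the identification of the Hopf action as right multiplication by $D_\theta$ and of the $G$-action as left multiplication is correct under \eqref{equation: S^3 Lie}, and $P(z)$ does carry the first coordinate axis to the axis through $p$), and it has the side benefit of exhibiting the group element $zD_\theta z^{-1}$ realizing the return, which anticipates the conjugacy-class bookkeeping the paper only does later in Section \ref{subsection: free homotopy classes represented by reeb orbits}; the paper's argument is softer and shorter. One step you should spell out: in the odd case, surjectivity of $P:G\cap T_p\to H_p$ needs the observation that $T_p$ is the \emph{full} $P$-preimage of the circle of rotations about the axis through $p$ (if $P(x)$ fixes $p$ then $x=\pm y$ with $y\in T_p$, and $-\text{Id}\in T_p$), so the unique $G$-preimage of each $h\in H_p$ automatically lies in $T_p$; this is a one-line fix. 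Also, the parity of $n$ is not needed to identify the order-$n$ subgroup of the cyclic group of order $2n$ (uniqueness of subgroups of a given order suffices); it only enters as the consistency check that $-\text{Id}$ is excluded.
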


\begin{proof}
Recall that $|G|$ is even if and only if $|G|=2|H|$, and that $|G|$ is odd if and only if $|G|=|H|$ (the only element of $\SU(2)$ of order 2 is $-\text{Id}$, the generator of $\text{ker}(P)$). By the classification of finite subgroups of $\SU(2)$, if $|G|$ is odd, then $G$ is cyclic. 

Let $q:=\pi_H(p)\in S^2/H$, let $d:=|\mathcal{O}(p)|$ so that $d|H_p|=|H|$ and $|G|=rd|H_p|$, where $r=2$ when $|G|$ is even and $r=1$ when odd. Label the points of $\mathcal{O}(p)$ by $p_1=p, p_2, \dots, p_d$. Now $\fP^{-1}(\mathcal{O}(p))$ is a disjoint union of $d$ Hopf fibers, $C_i$, where $C_i=\fP^{-1}(p_i)$. Let $C$  denote the embedded circle $\fp^{-1}(q)\subset S^3/G$. By commutativity of   \eqref{diagram: commuting square}, we have that $\pi_G^{-1}(C)=\sqcup_{i}C_i=\fP^{-1}(\mathcal{O}(p))$. We have the following commutative diagram of circles and points:
\[\begin{tikzcd}
\sqcup_{i=1}^dC_i=\pi_G^{-1}(C) \arrow{r}{\pi_G} \arrow{d}{\fP}
& C=\fp^{-1}(q) \arrow{d}{\fp} \\
\{p_1,\dots, p_d\}=\mathcal{O}(p) \arrow{r}{\pi_H}
& \{q\}
\end{tikzcd}\]
We must have that $\pi_G:\sqcup_{i=1}^dC_i\to C$ is a $|G|=rd|H_p|$-fold cover from the disjoint union of $d$ Hopf fibers to one embedded circle. The restriction of $\pi_G$ to any one of these circles $C_i$ provides a smooth covering map, $C_i\to C$; let $n_i$ denote the degree of this cover.  Because $G$ acts transitively on these circles, all of the degrees $n_i$ are equal to some $n$. Thus, $\sum_{i}^dn_i=dn=|G|=rd|H_p|$ which implies that $n=r|H_p|$ is the covering multiplicity of $\pi_G\circ\gamma_p$.
\end{proof}

We conclude this section with an analogue of Corollary $\ref{corollary: CZ sphere}$ for the Reeb orbits of $\lambda_{G,\varepsilon}$.

\begin{lemma}\label{lemma: ActionThresholdLink}
Fix $L>0$. Then there exists some $\varepsilon_0>0$ such that, for  $\varepsilon\in(0,\varepsilon_0]$, all Reeb orbits  $\gamma\in\mathcal{P}^L(\lambda_{G,\varepsilon})$ are nondegenerate, project to an orbifold critical point of $f_{H}$ under $\fp:S^3/G\to S^2/H$, and $\mu_{\CZ}(\gamma)=4k+\mbox{\em ind}_f(p)-1$ whenever $\gamma$ is contractible with a lift to some orbit $\gamma_p^k$ in $S^3$ as in Lemma \ref{lemma: orbits}, where $p\in\mbox{\em Crit}(f)$.
\end{lemma}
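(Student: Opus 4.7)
The plan is to reduce to the sphere case already proven in Corollary~\ref{corollary: CZ sphere} via the covering $\pi_G:S^3\to S^3/G$. Given $L>0$, I would apply Corollary~\ref{corollary: CZ sphere} with action threshold $L':=|G|\,L$ to extract $\varepsilon_0>0$ such that, for every $\varepsilon\in(0,\varepsilon_0]$, every Reeb orbit of $\lambda_\varepsilon$ on $S^3$ of action less than $|G|L$ is nondegenerate, equals some iterate $\gamma_p^k$ over a critical point $p\in\text{Crit}(f)$, and has Conley--Zehnder index $4k+\text{ind}_f(p)-1$. The claim will be that this same $\varepsilon_0$ works downstairs.

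Fix $\varepsilon\in(0,\varepsilon_0]$ and $\gamma\in\mathcal{P}^L(\lambda_{G,\varepsilon})$ with period $T<L$. Since $\pi_G^*\lambda_{G,\varepsilon}=\lambda_\varepsilon$, any lift $\tilde\gamma:[0,T]\to S^3$ of $\gamma$ is an integral curve of $R_{\lambda_\varepsilon}$. The endpoint $\tilde\gamma(T)$ differs from $\tilde\gamma(0)$ by a deck transformation $g\in G$ whose order $n$ divides $|G|$, so concatenating $n$ consecutive lifts produces a closed Reeb orbit $\tilde\gamma^{\,n}$ of $\lambda_\varepsilon$ in $S^3$ of action $nT\le |G|T<|G|L$. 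Corollary~\ref{corollary: CZ sphere} then identifies $\tilde\gamma^{\,n}$ with some $\gamma_p^k$, $p\in\text{Crit}(f)$, and asserts its nondegeneracy.

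To pass the conclusions back down, commutativity of diagram~\eqref{diagram: commuting square} together with $\tilde\gamma^{\,n}=\gamma_p^k$ forces $\fp\circ\gamma\equiv\pi_H(p)$, which is an orbifold critical point of $f_H$ because $f_H$ descends the $H$-invariant $f$ and $\text{Crit}(f)=\text{Fix}(H)$. For nondegeneracy, let $P\in\text{Sp}(2)$ denote the linearized return map of $\gamma$ computed via $\tau_G$; by Remark~\ref{remark: same local models} it coincides with the time-$T$ linearized flow along $\tilde\gamma$ via $\tau$, so the return map of the closed orbit $\tilde\gamma^{\,n}$ is precisely $P^n$. A fixed vector of $P$ would also be fixed by $P^n$, so nondegeneracy of $\tilde\gamma^{\,n}$ forces $1\notin\text{Spec}(P)$. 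Finally, if $\gamma$ is contractible then any lift $\tilde\gamma$ is itself closed (a nullhomotopy in $S^3/G$ lifts to a nullhomotopy in $S^3$), so $n=1$ and $\tilde\gamma=\gamma_p^k$; Remark~\ref{remark: same local models} together with the fact that $\tau_G$ is the descent of $\tau$ then yields $\mu_{\CZ}(\gamma)=\mu_{\CZ}(\gamma_p^k)=4k+\text{ind}_f(p)-1$ via Corollary~\ref{corollary: CZ sphere}.

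The main difficulty is bookkeeping rather than geometry: carefully handling the possibly non-closed lift $\tilde\gamma$ and its closure $\tilde\gamma^{\,n}$, and in particular tracking the action inflation by at most a factor of $|G|$ that motivates the threshold $L'=|G|L$. The transfer of nondegeneracy goes against the direction of iteration and rests on the elementary observation that $1\in\text{Spec}(P)$ implies $1\in\text{Spec}(P^n)$. Once these points are settled, the remaining assertions are direct consequences of Corollary~\ref{corollary: CZ sphere} and the descent properties already recorded for $\lambda_\varepsilon$, the trivialization $\tau$, and the Morse function $f$.
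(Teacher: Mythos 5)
Your proposal is correct and follows essentially the same route as the paper: apply Corollary \ref{corollary: CZ sphere} at the inflated threshold $L'=|G|L$, pass to the closed lift obtained from the $n$-fold iterate (with $n$ the order of the deck transformation), and transfer nondegeneracy, the projection to an orbifold critical point, and the Conley--Zehnder index back down via diagram \eqref{diagram: commuting square} and Remark \ref{remark: same local models}. Your explicit remark that $1\in\text{Spec}(P)$ would imply $1\in\text{Spec}(P^n)$ is just a slightly more detailed phrasing of the paper's degeneracy argument.
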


\begin{proof}
Let $L':=|G|L$ and take the corresponding $\varepsilon_0$ as appearing in Corollary \ref{corollary: CZ sphere}, applied to $L'$. Now, for $\varepsilon\in(0,\varepsilon_0]$, elements of $\mathcal{P}^{L'}(\lambda_{\varepsilon})$ are nondegenerate and project to critical points of $f$. Let $\gamma\in\mathcal{P}^L(\lambda_{G,\varepsilon})$ and let $n\in\N$ denote the order of $[\gamma]$ in $\pi_1(S^3/G) \cong G$. Now we see that $\gamma^n$ is contractible and lifts to an orbit $\widetilde{\gamma}\in\mathcal{P}^{L'}(\lambda_{\varepsilon})$, which must be  nondegenerate and must project to some critical point $p$ of $f$ under $\fP$. 

If the orbit $\gamma$ is degenerate, then $\gamma^n$ is degenerate and, by the discussion in Remark \ref{remark: same local models}, $\widetilde{\gamma}$ would be degenerate. Commutativity of  \eqref{diagram: commuting square} implies that $\gamma$ projects to the orbifold critical point $\pi_H(p)$ of $f_H$ under $\fp$. Finally, if $n=1$ then again by Remark \ref{remark: same local models}, the local model $\{N_t\}_{t\in[0,T]}$ of the Reeb flow along $\gamma$ matches that of $\widetilde{\gamma}$, $\{M_t\}_{t\in[0,T]}$, and thus $\mu_{\CZ}(\gamma)=\mu_{\CZ}(\widetilde{\gamma})$. The latter index is computed in Corollary \ref{corollary: CZ sphere}.

\end{proof}

\subsection{Construction of $H$-invariant Morse-Smale functions}\label{appendix: constructing morse functions}

We now produce the $H$-invariant, Morse-Smale functions on $S^2$ for the dihedral $H=\D_{2n}$ and polyhedral $H=\P$ subgroups of $\SO(3)$. Table \ref{table: morse points} describes three finite subsets, $X_0$, $X_1$, and $X_2$, of $S^2$ which depend on $H\subset\SO(3)$. We construct an $H$-invariant, Morse-Smale function $f$ on $(S^2, \omega_{\FS}(\cdot, j\cdot))$, whose set of critical points of index $i$ is $X_i$, so that $\text{Crit}(f)=X:=X_0\cup X_1\cup X_2$. Additionally, $X=\text{Fix}(H)$, the fixed point set of the $H$-action on $S^2$.  This constructed $f$ is \emph{perfect} in the sense that it features the minimal number of required critical points, because $\text{Fix}(H)\subset\text{Crit}(f)$ must always hold. In the case that $H$ is a polyhedral group, $X_0$ is the set of vertex points, $X_1$ is the set of edge midpoints, and $X_2$ is the set of face barycenters.  

We illustrate the constructions of our perfect $H$-invariant, Morse-Smale functions on $S^2$ in Figures \ref{figure: dihedral}, \ref{figure: tetrahedral}, \ref{figure: octahedral}, and \ref{figure: icosahedral}, wherein the blue, violet, and red critical points are of Morse index 2, 1, and 0 respectively.  Further details are also given in the following proof of Lemma \ref{lemma: morse}


\begin{table}[h!]
\centering
 \begin{tabular}{|| c | c | c | c ||} 
 \hline
 $H$ & $X_0$ & $X_1$ & $X_2$\\ [0.5ex] 
 \hline\hline
 $\D_{2n}$ & $\{p_{-,k}\,|\,1\leq k\leq n\}$ & $\{p_{h,k}\,|\,1\leq k\leq n\}$ & $\{p_{+,k}\,|\, 1\leq k\leq 2\}$\\
 \hline
 $\P$ & $\{\mathfrak{v}_k\,|\,1\leq k\leq A\}$ & $\{\mathfrak{e}_k\,|\,1\leq k\leq B\}$   & $\{\mathfrak{f}_k\,|\,1\leq k\leq C\}$ \\
 \hline
\end{tabular}
 \caption{Fixed points of $H$ sorted by Morse index}
 \label{table: morse points}
\end{table}
\begin{lemma}\label{lemma: morse}
Let $H\subset\mbox{\em SO}(3)$ be either $\D_{2n}$ or $\P$. Then there exists an $H$-invariant, Morse function $f$ on $S^2$, with $\mbox{\em Crit}(f)=X$, such that $\mbox{\em ind}_f(p)=i$ if $p\in X_i$. Furthermore, there are stereographic coordinates at $p\in X$ in which $f$ takes the form  
\begin{enumerate}[(i)]
 \itemsep-.35em
    \item $q_0(x,y):=(x^2+y^2)/2-1$,\,\,\, if $p\in X_0$
    \item $q_1(x,y):=(y^2-x^2)/2$,\,\,\,\,\,\,\,\,\,\,\,\,\, if $p\in X_1$
    \item $q_2(x,y):=1-(x^2+y^2)/2$,\,\,\,\,\,if $p\in X_2$
\end{enumerate}
\end{lemma}
\begin{proof}
We first produce an auxiliary Morse function $\widetilde{f}$, which might not be $H$-invariant, then $f$ is taken to be the $H$-average of $\widetilde{f}$. Fix $\delta>0$;  for $p\in X$, let $D_p\subset S^2$ be the open geodesic disc centered at $p$ with radius $\delta$ with respect to the metric $\omega_{\FS}(\cdot, j\cdot)$. Define $\widetilde{f}$ on $D_p$ to be the pullback of $q_0$, $q_1$, or $q_2$, for $p\in X_0$, $X_1$, or $X_2$ respectively, by stereographic coordinates at $p$. Set $D:=\cup_{p\in X}D_p$. For $\delta$ small, $D$ is a \emph{disjoint} union and $\widetilde{f}:D\to\R$ is Morse. We can arrange for our selection of stereographic coordinates to satisfy \[(*)\,\,\text{for all}\,\, p\in X_1,\,\,\text{and}\,\, h\in H, \,\, \widetilde{f}|_{D_p}=\widetilde{f}\circ\phi_h|_{D_p},\] where $\phi_h:S^2\to S^2$ denotes $x\mapsto h\cdot x$. This ensures that the ``saddles are rotated in the same $H$-direction". Notice that $(*)$ automatically holds for $p\in X_0\cup X_2$, for any choice of coordinates, because of the rotational symmetry of the quadratics $q_0$ and $q_2$. Note that:
\begin{enumerate}[(a)]
    \itemsep-.35em
    \item The $\delta$-neighborhood $D$ of $X$ in $S^2$, is an $H$-invariant set, and for all $p\in X$, the $H_p$-action restricts to an action on $D_p$, where $H_p\subset H$ denotes the stabilizer subgroup.
        \item For all $p\in X$, $\widetilde{f}|_{D_p}:D_p\to\R$ is $H_p$-invariant.
    \item The function $\widetilde{f}:D\to\R$ is an $H$-invariant Morse function, with $\text{Crit}(\widetilde{f})=X$.
\end{enumerate}
 The $H$-invariance and $H_p$-invariance in (a) hold,  because $H\subset\SO(3)$ acts on $S^2$ by $\omega_{\FS}(\cdot,j\cdot)$-\emph{isometries} (rotations about axes through $p\in X$), and because $X$ is an $H$-invariant set. The $H_p$-invariance of (b) holds because, in stereographic coordinates, the $H_p$-action pulled back to $\R^2$ is always generated by some linear rotation about the origin, $\mathcal{R}(\theta)$. Both $q_0$ and $q_2$ are invariant with respect to any $\mathcal{R}(\theta)$, whereas $q_1$ is invariant with respect to the action generated by $\mathcal{R}(\pi)$, which is precisely the action by $H_p$ when $p\in X_1$, so (b) holds. Finally, the $H$-invariance in (c) holds directly by the $H_p$ invariance from (b), and by $(*)$. Now, extend the domain of $\widetilde{f}$ from $D$ to all of $S^2$ so that $\widetilde{f}$ is smooth and Morse, with $\text{Crit}(\widetilde{f})=X$. Figures \ref{figure: dihedral} and \ref{figure: tetrahedral} depict possible extensions $\widetilde{f}$ in the $H=\D_{2n}$ and $H=\T$ cases, for example.

For $h\in H$, let $\phi_h:S^2\to S^2$ denote the group action, $p\mapsto h\cdot p$. Define \[f:=\frac{1}{|H|}\sum_{h\in H}\phi_h^*\widetilde{f},\] where $|H|\in\N$ is the group order of $H$.  This $H$-invariant $f$ is smooth and agrees with $\widetilde{f}$ on $D$. If no critical points are created in the averaging process of $\widetilde{f}$, then we have that $\text{Crit}(f)=X$, implying that $f$ is Morse, and we are done.

We say that the extension $\widetilde{f}$ to $S^2$ from $D$ is \emph{roughly} $H$-invariant, if for any $p\in S^2\setminus X$ and $h\in H$, the angle between the nonzero gradient vectors \[\text{grad}(\widetilde{f})\,\,\, \text{and}\,\,\, \text{grad}(\phi_h^*\widetilde{f})\] in $T_pS^2$ is less than $\pi/2$. If $\widetilde{f}$ if roughly $H$-invariant, then  for $p\notin X$, $\text{grad}(f)(p)$ is an average of a collection of nonzero vectors in the same convex half space of $T_pS^2$ and must be nonzero, implying $p\notin\text{Crit}(f)$. That is, if $\widetilde{f}$ is roughly $H$-invariant, then $\text{Crit}(f)=X$, as desired. The extensions $\widetilde{f}$ in Figures \ref{figure: dihedral}, \ref{figure: tetrahedral}, \ref{figure: octahedral}, and \ref{figure: icosahedral} are all roughly $H$-invariant by inspection, and the proof is complete. 
\end{proof}

\begin{figure}[!htb]
    \centering
    \begin{minipage}{.5\textwidth}
        \centering
        \includegraphics[width=0.9\textwidth]{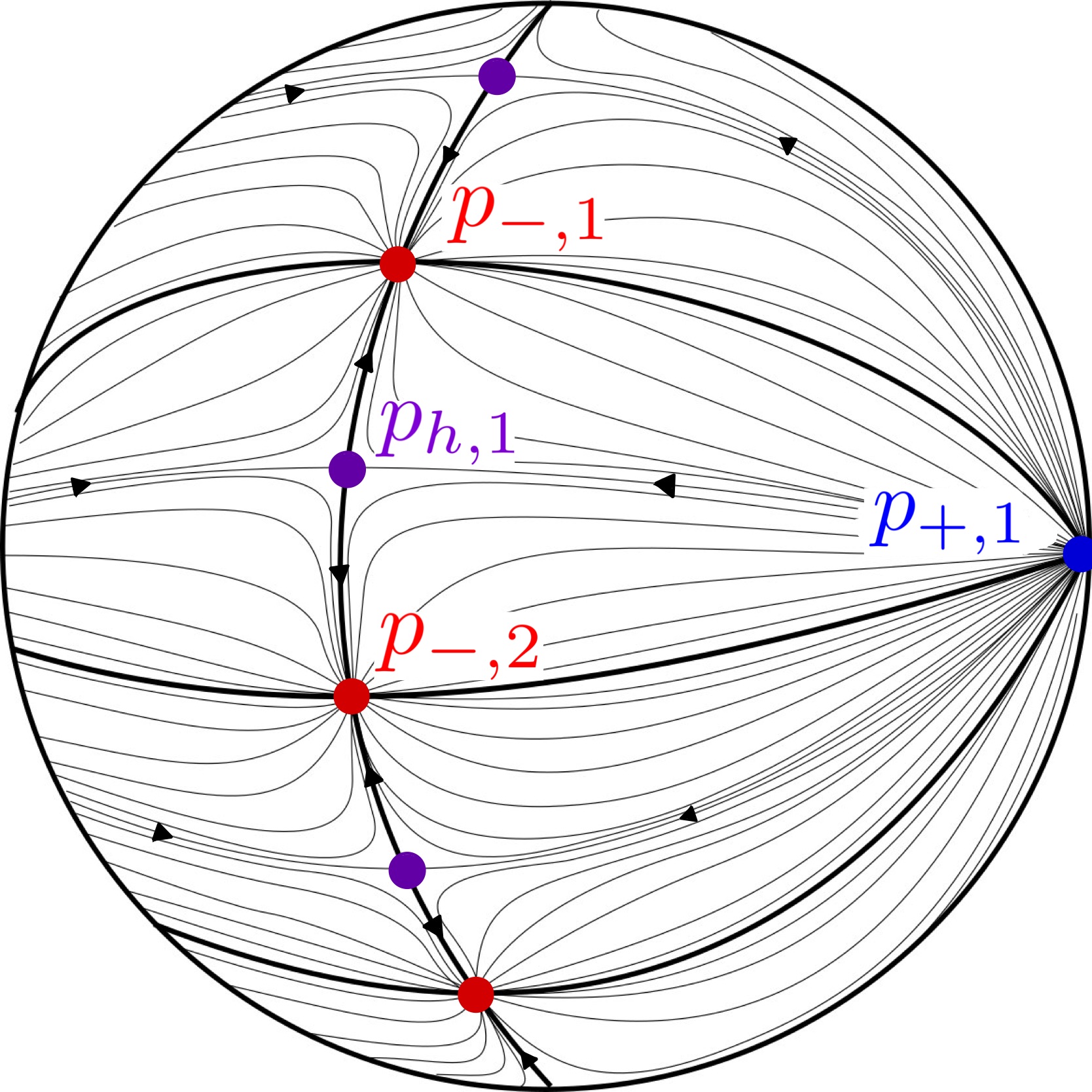}
        \caption{A dihedral $\widetilde{f}$}
        \label{figure: dihedral}
    \end{minipage}%
    \begin{minipage}{0.5\textwidth}
        \centering
        \includegraphics[width=0.9\textwidth]{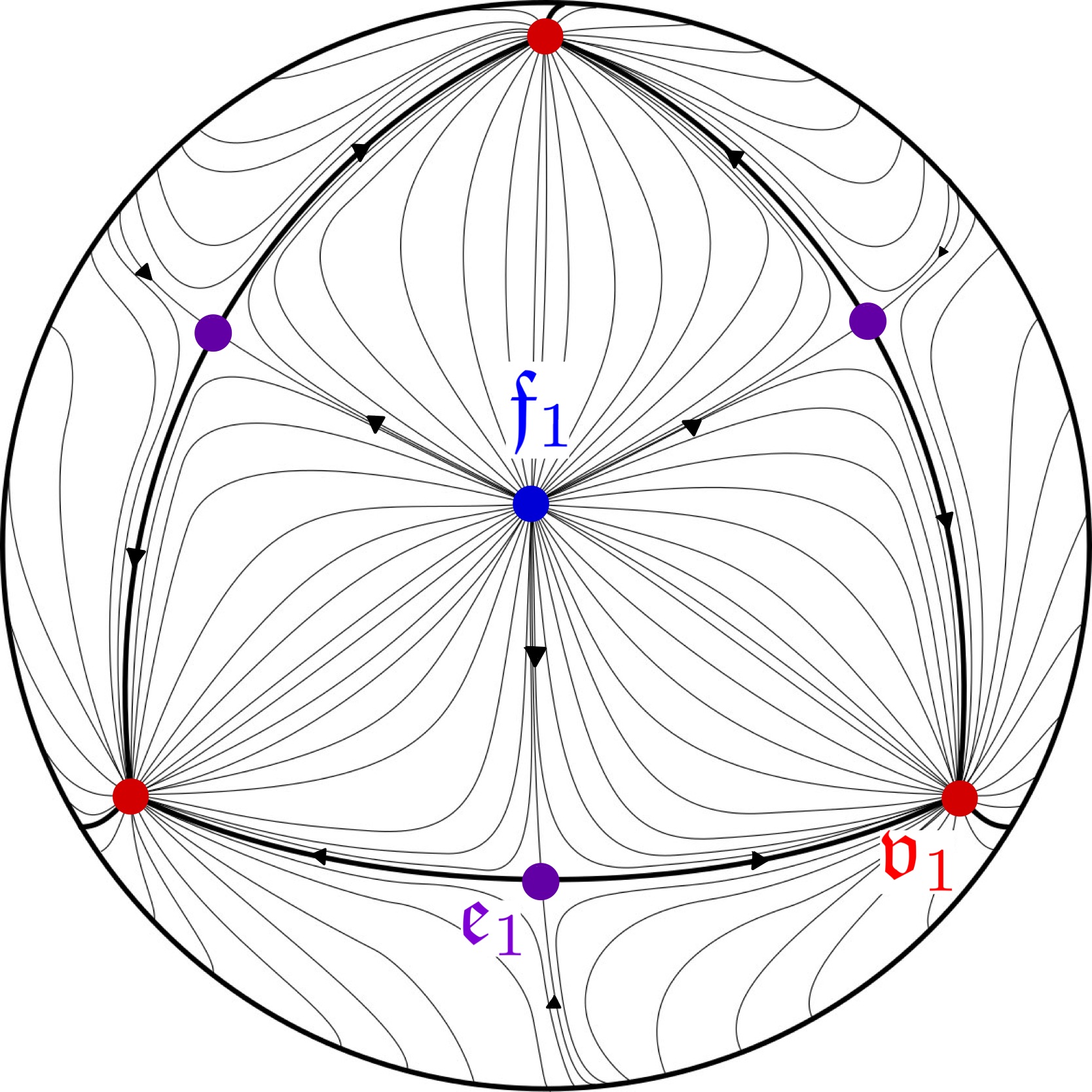}
        \caption{A tetrahedral $\widetilde{f}$}
        \label{figure: tetrahedral}
    \end{minipage}
\end{figure}

\begin{figure}[!htb]
    \centering
    \begin{minipage}{.5\textwidth}
        \centering
        \includegraphics[width=0.9\textwidth]{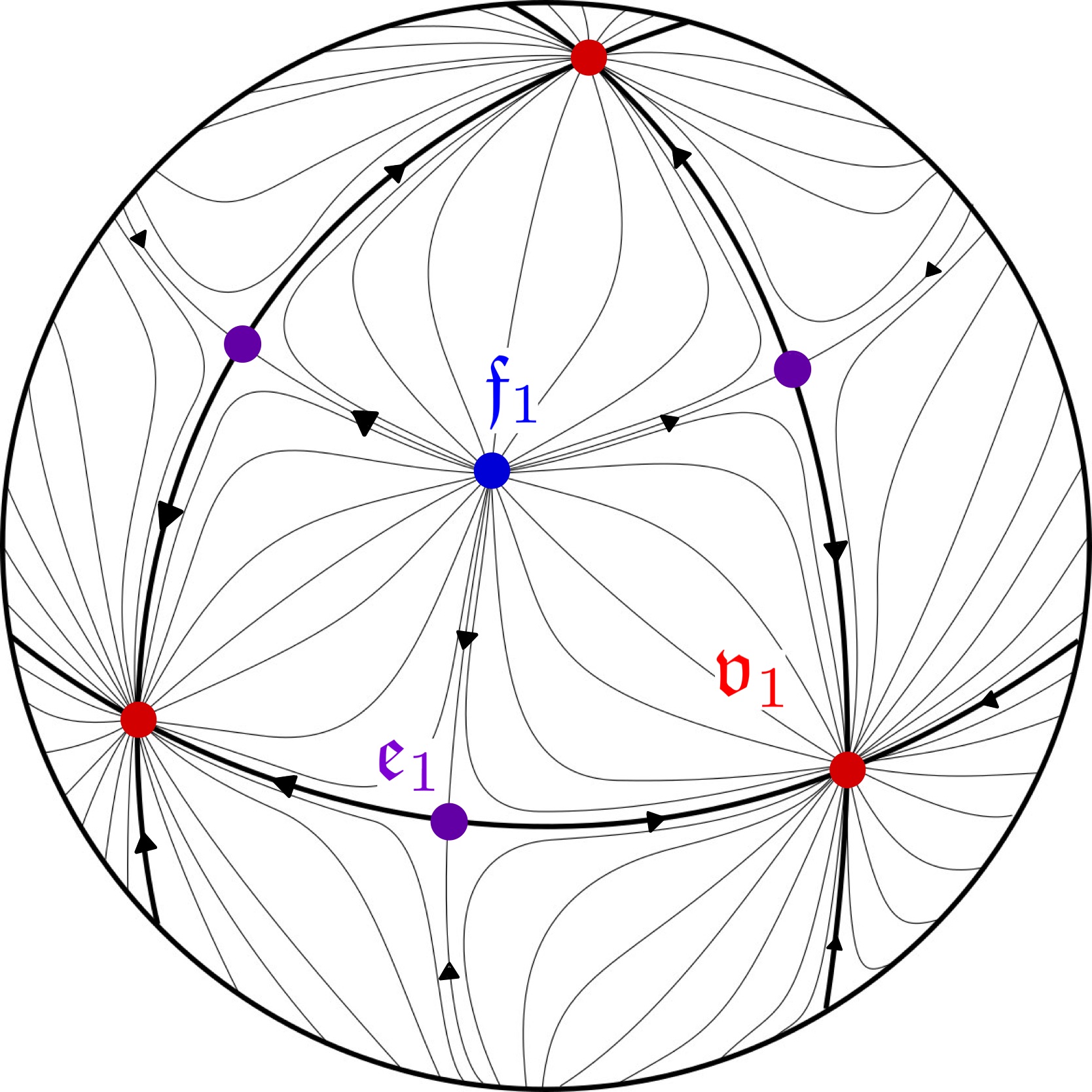}
        \caption{An octahedral $\widetilde{f}$}
        \label{figure: octahedral}
    \end{minipage}%
    \begin{minipage}{0.5\textwidth}
        \centering
        \includegraphics[width=0.9\textwidth]{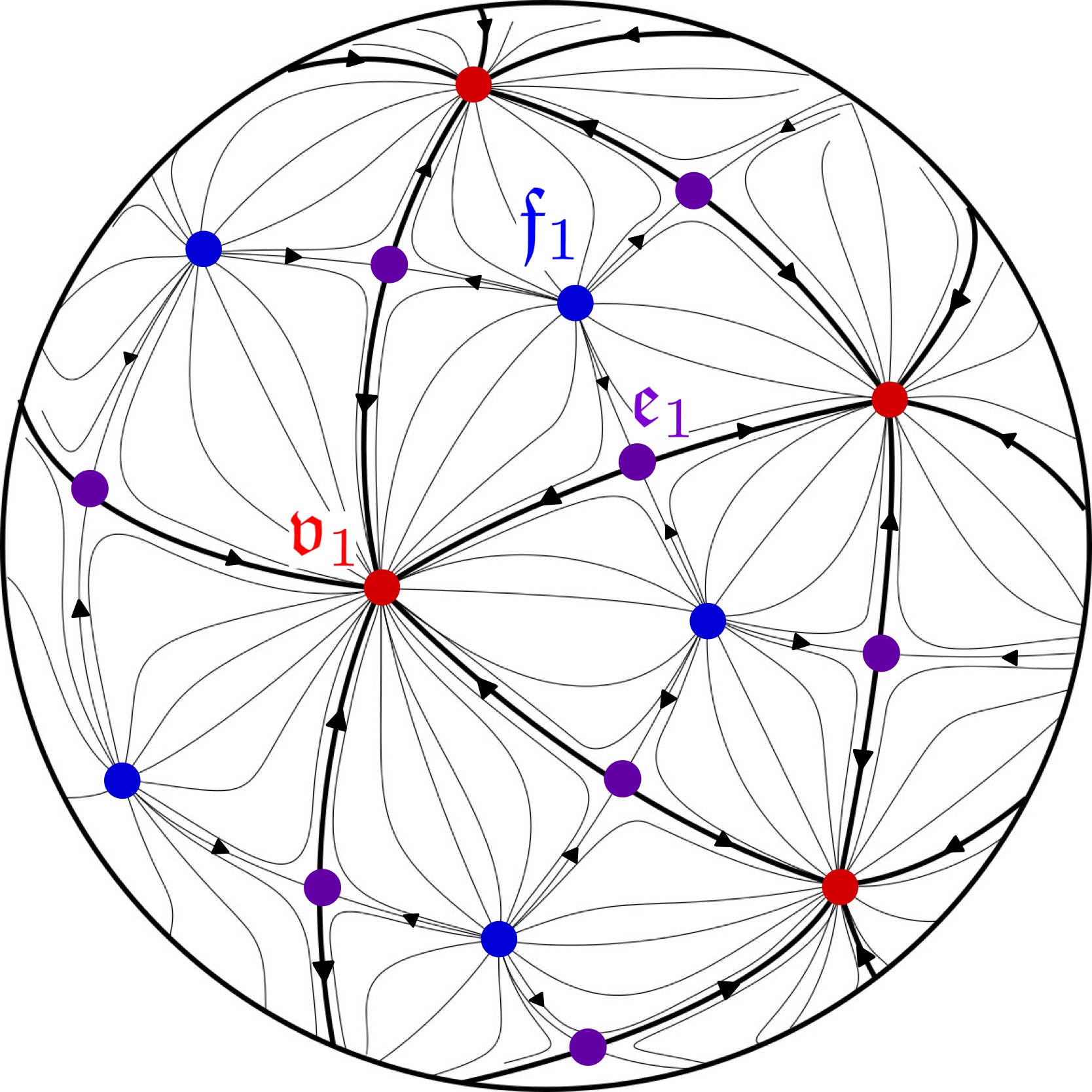}
        \caption{An icosahedral $\widetilde{f}$}
        \label{figure: icosahedral}
    \end{minipage}

\end{figure}

\begin{lemma}\label{lemma: smale}
If $f$ is a Morse function on a 2-dimensional manifold $S$ such that $f(p_1)=f(p_2)$ for all $p_1, p_2\in\mbox{\em Crit}(f)$ with Morse index 1, then $f$ is Smale, given any metric on $S$.
\end{lemma}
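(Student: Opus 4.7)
The plan is to verify the Morse--Smale condition $W^u(p)\pitchfork W^s(q)$ directly for every ordered pair $(p,q)$ of critical points of $f$, by case analysis on Morse indices. Since $\dim S=2$, indices lie in $\{0,1,2\}$ with $\dim W^u(p)=\mathrm{ind}_f(p)$ and $\dim W^s(q)=2-\mathrm{ind}_f(q)$, for the negative gradient flow of $f$ with respect to the chosen Riemannian metric.

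My first step would be the general observation that whenever $W^u(p)$ or $W^s(q)$ is a $2$-dimensional open subset of $S$, transversality is automatic, because the tangent space of such a submanifold at any point already fills $T_xS$. This dispatches every pair with $\mathrm{ind}_f(p)=2$ or $\mathrm{ind}_f(q)=0$, and works irrespective of the metric. Next, when $\mathrm{ind}_f(p)=0$ the unstable manifold is the singleton $\{p\}$; since $p$ is critical, $p\notin W^s(q)$ unless $q=p$, in which case transversality at a nondegenerate critical point follows from the Hessian splitting $T_pS=T_pW^u(p)\oplus T_pW^s(p)$. The symmetric argument handles $\mathrm{ind}_f(q)=2$.

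The single remaining case, and the only place the hypothesis is used, is $\mathrm{ind}_f(p)=\mathrm{ind}_f(q)=1$. If $p\neq q$ and $x\in W^u(p)\cap W^s(q)$, then $x$ sits on a nonconstant negative-gradient trajectory from $p$ to $q$, which forces $f(p)>f(x)>f(q)$ by the strict decrease of $f$ along such trajectories. This contradicts the hypothesis that $f(p)=f(q)$ for all index-$1$ critical points, so $W^u(p)\cap W^s(q)=\emptyset$ and transversality holds vacuously. The case $p=q$ again reduces to the Hessian splitting at a nondegenerate critical point.

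The main ``obstacle'' is merely recognizing the index-$1$-to-index-$1$ pair as the only genuinely nontrivial case: it is exactly the situation that can fail for a non-generic metric, which is why the Morse--Smale condition ordinarily requires genericity. Here the equal-value hypothesis eliminates any such flow lines independently of the metric, so the conclusion holds for every Riemannian metric on $S$.
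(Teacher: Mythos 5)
Your proof is correct and follows essentially the same route as the paper: the paper's proof simply asserts the fact your case analysis establishes (that with any metric the Smale condition can only fail via a gradient trajectory joining two distinct index-$1$ critical points) and then, exactly as you do, rules such trajectories out because $f$ strictly decreases along nonconstant flow lines while the index-$1$ critical values coincide. Your extra dimension-count cases are the standard justification of that assertion, so the two arguments differ only in how much is left implicit.
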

\begin{proof}
Given metric $g$ on $S$, $f$ fails to be Smale with respect to $g$ if and only if there are two distinct critical points of $f$ of Morse index 1 that are connected by a gradient flow line of $f$. Because all such critical points have the same $f$ value, no such flow line exists.
\end{proof}

\begin{remark}
By Lemma \ref{lemma: smale}, the Morse function $f$ provided in Lemma \ref{lemma: morse} is Smale for $\omega_{\FS}(\cdot, j\cdot)$.
\end{remark}

\subsection{Cylinders over orbifold Morse trajectories}\label{subsection: cylinders over orbifold Morse trajectories}
In Section \ref{section: computation of filtered contact homology} we will compute the action filtered cylindrical contact homology groups using the preceding set up.  In particular we will show that the grading of any generator of the filtered chain complex is even\footnote{Recall that the degree of a generator $\gamma$ is given by $|\gamma|=\mu_{\CZ}(\gamma)-1$.}, implying that the action filtered differential {vanishes}.

It is interesting to note however that not all moduli spaces of holomorphic cylinders are empty. In this section, we elucidate the correspondence between moduli spaces of certain $J$-holomorphic cylinders and the moduli spaces of {orbifold} Morse trajectories in the base; the latter is often nonempty. We establish an orbifold version of the correspondence between cylinders and flow lines, in particular constructing a holomorphic cylinder from an orbifold Morse trajectory.  We do not provide the full details as to why all holomorphic cylinders arise this way; this direction of the correspondence follows by way of the arguments as collected in \cite[\S 5]{N2} in the context of prequantization bundles, which may be invoked in the setting at hand as a result of \cite[\S 2, 4]{HN}, \cite[\S 10]{wendl-sft}, \cite{SZ}.  While nothing presented in this section is necessary to the proof of Theorem \ref{theorem: main}, the correspondence may be of value for computing other contact homology theories.

As discussed in Section \ref{subsection: cylindrical contact homology as an analogue of orbifold Morse homology}, the Seifert projection $\fp:S^3/G\to S^2/H$ highlights many of the interplays between orbifold Morse theory and cylindrical contact homology. In particular, the projection geometrically relates holomorphic cylinders in $\R\times S^3/G$ to the orbifold Morse trajectories in $S^2/H$. This necessitates a discussion about the complex structure on $\xi_{G}$ that we will use.

\begin{remark}\label{remark: the specific almost complex structure}(\emph{The canonical complex structure $J$ on $\xi_{G}$}) \\ Recall that the $G$-action on $S^3$ preserves the standard complex structure $J_{\C^2}$ on $\xi\subset TS^3$. Thus, $J_{\C^2}$ descends to a complex structure on $\xi_G$, which we will denote simply by $J$ in this section. Note that for any sufficiently small $\varepsilon >0$, and for any $f$ on $S^2$, $J_{\C^2}$ is $\lambda_{\varepsilon}$-compatible, thus $J$ is $\lambda_{G,\varepsilon}$-compatible as well.\footnote{This $J$ might not be one of the generic $J_N$ used to compute the filtered homology groups in the later Sections \ref{subsection: cyclic}, \ref{subsection: dihedral}, or \ref{subsection: polyhedral}.  A generic choice of $J_N$ is necessary to ensure transversality of the cylinders in symplectic cobordisms, which are used to define the chain maps later in Section \ref{section: direct limits of filtered homology}.}
\end{remark}

\begin{remark}\label{remark: cylinders over flow lines}(\emph{$J_{\C^2}$-holomorphic cylinders in $\R\times S^3$ over Morse flow lines in $S^2$}) \\ Fix critical points $p$ and $q$ of a Morse-Smale function $f$ on $(S^2,\omega_{\FS}(\cdot,j\cdot))$. Fix $\varepsilon>0$ sufficiently small. By \cite[Propositions 5.4, 5.5]{N2}, we have a bijective correspondence between $\mathcal{M}(p,q)$ and $\mathcal{M}^{J_{\C^2}}(\gamma_p^k,\gamma_q^k)/\R$, for any $k\in\N$, where $\gamma_p$ and $\gamma_q$ are the embedded Reeb orbits in $S^3$ projecting to $p$ and $q$ under $\fP$. Given a Morse trajectory $x\in\mathcal{M}(p,q)$, the components of the corresponding cylinder $u_x:\R\times S^1\to\R\times S^3$ are explicitly written down in \cite[\S 5]{N2} in terms of a parametrization  of $x$, the Hopf action on $S^3$, the Morse function $f$, and the horizontal lift of its gradient to $\xi$. The resulting $u_x\in\mathcal{M}^{J_{\C^2}}(\gamma_p,\gamma_q)/\R$ is $J_{\C^2}$-holomorphic.\footnote{We are abusing notation by conflating the parametrized cylindrical map $u_x$ with the equivalence class $[u_x]\in\mathcal{M}^{J_{\C^2}}(\gamma_p,\gamma_q)/\R$; we will continue to abuse notation in this way.}  Furthermore, the Fredholm index of $u_x$ agrees with that of $x$. The image of the composition  \[\R\times S^1\xrightarrow{u_x}\R\times S^3\xrightarrow{\pi_{S^3}}S^3\xrightarrow{\fP}S^2\] equals the image of $x$ in $S^2$. We call $u_x$ the \emph{cylinder over} $x$. \end{remark}

The following procedure uses Remark \ref{remark: cylinders over flow lines} and Diagram \ref{diagram: commuting square} to establish a similar correspondence between moduli spaces of {orbifold} flow lines of $S^2/H$ and moduli spaces of $J$-holomorphic cylinders in $\R\times S^3/G$, where $J$ is taken to be the $J_{\C^2}$-descended complex structure on $\xi_{G}$ from Remark \ref{remark: the specific almost complex structure}.
\begin{enumerate}
    \item Take $x\in\mathcal{M}(p,q)$, for orbifold Morse critical points $p, q\in S^2/H$ of $f_H$.
    \item Take a $\pi_H$-lift, $\widetilde{x}:\R\to S^2$ of $x$, from $\widetilde{p}$ to $\widetilde{q}$ in $S^2$, for some preimages $\widetilde{p}$ and $\widetilde{q}$ of $p$ and $q$. We have $\widetilde{x}\in\mathcal{M}(\widetilde{p},\widetilde{q})$.
    \item Let $u_{\widetilde{x}}$ be the $J_{\C^2}$-holomorphic cylinder in $\R\times S^3$ over $\widetilde{x}$ (see Remark \ref{remark: cylinders over flow lines}). We now have $u_{\widetilde{x}}\in\mathcal{M}^{J_{\C^2}}(\gamma_{\widetilde{p}},\gamma_{\widetilde{q}})/\R$.
    \item Let $u_x:\R\times S^1\to\R\times S^3/G$ denote the composition \[\R\times S^1\xrightarrow{u_{\widetilde{x}}}\R\times S^3\xrightarrow{\text{Id}\times\pi_G}\R\times S^3/G.\] Because $J$ is the $\pi_G$-descent of $J_{\C^2}$, we have that \[\text{Id}\times\pi_G:(\R\times S^3,J_{\C^2})\to(\R\times S^3/G,J)\] is a holomorphic map. This implies that $u_x$ is $J$-holomorphic; \begin{equation}\label{equation: induced cylinder}
        u_x\in\mathcal{M}^J(\pi_G\circ\gamma_{\widetilde{p}},\pi_G\circ\gamma_{\widetilde{q}})/\R.
    \end{equation}
\end{enumerate}
Note that $\pi_G\circ\gamma_{\widetilde{p}}$ and $\pi_G\circ\gamma_{\widetilde{q}}$ are contractible Reeb orbits of $\lambda_{G,\varepsilon}$ projecting to $p$ and $q$, respectively. Thus, if $\gamma_p$ and $\gamma_q$ are the embedded (potentially non-contractible) Reeb orbits of $\lambda_{G,\varepsilon}$ in $S^3/G$ over $p$ and $q$, then we have that \[\pi_G\circ\gamma_{\widetilde{p}}=\gamma_p^{m_p},\,\,\,\text{and}\,\,\,\pi_G\circ\gamma_{\widetilde{q}}=\gamma_q^{m_q},\]
where $m_p, m_q\in\N$ are the orders of $[\gamma_p]$ and $[\gamma_q]$ in $\pi_1(S^3/G)$. In particular, we can simplify equation \eqref{equation: induced cylinder}: \[u_x\in\mathcal{M}^J(\gamma_p^{m_p},\gamma_q^{m_q})/\R.\] This allows us to establish an orbifold version of the correspondence in \cite[\S 5]{N2}:
\begin{align}
    \mathcal{M}(p,q)&\cong\mathcal{M}^J(\gamma_p^{m_p},\gamma_q^{m_q})/\R \label{equation: induced cylinder 2}\\
    x&\sim u_x\nonumber.
\end{align}

\begin{remark}
In order to conclude that all holomorphic cylinders arise as lifts of orbifold Morse trajectories, one must make a straightforward modification of the arguments as explained in  \cite[\S 5]{N2}, which adapts \cite[Thm.~10.30, 10.32]{wendl-sft}, which in turn is a modification of the original arguments by Salamon and Zehnder \cite{SZ}.  The proof of \cite[Thm.~5.5]{N2} holds in the present setting as a result of the compactness results established in \cite[Prop.~2.8]{HN} and automatic transversality results \cite[Prop.~4.2(b)]{HN}, \cite{W}.
\end{remark}

\subsection{Orbifold and contact interplays: an example}\label{subsection: visualizing holomorphic cylinders: an example}
Before giving the proof of the main theorem, we continue with our digression establishing 
connections between the contact data of $S^3/G$ and the orbifold Morse data of $S^2/H$, as previously alluded to in Section \ref{subsection: cylindrical contact homology as an analogue of orbifold Morse homology}.  As before, for each $p\in\text{Crit}(f)$, select an orientation of the embedded disc $W^{-}(p)$.  The action of the stabilizer (equivalently, {isotropy}) subgroup of $p$, \[H_p:=\{h\in H\,|\, h\cdot p=p\}\subset H,\] on $S^2$ restricts to an action on $W^{-}(p)$ by diffeomorphisms. We say that the critical point $p$ is \emph{orientable} if this action is by {orientation preserving} diffeomorphisms. Let $\text{Crit}^+(f)\subset\text{Crit}(f)$ denote the set of orientable critical points.

Note that the $H$-action on $S^2$ permutes $\text{Crit}(f)$, and the action restricts to a permutation of $\text{Crit}^+(f)$. Furthermore, the index of a critical point is preserved by the action. Let $\text{Crit}(f_H)$, $\text{Crit}^+(f_H)$, and $\text{Crit}^+_k(f_H)$ denote the quotients $\text{Crit}(f)/H$, $\text{Crit}^+(f)/H$, and $\text{Crit}^+_k(f)/H$, respectively. As in the smooth case, we define the $k^{\text{th}}$-orbifold Morse chain group, denoted $CM_k^{\text{orb}}$, to be the free abelian group generated by $\text{Crit}_k^+(f_H)$. The differential will be defined by a signed {and weighted} count of negative gradient trajectories in $S^2/H$. The homology of this chain complex is, as in the smooth case, isomorphic to the singular homology of $S^2/H$ (\cite[Theorem 2.9]{CH}).

First, we demonstrate why it is necessary to discard the non-orientable critical points.

\begin{remark} \label{remark: nonorientable1}
(Discarding non-orientable critical points to recover singular homology) \\ Every index 1 critical point of $f:S^2\to\R$ depicted in Figures \ref{figure: dihedral},  \ref{figure: tetrahedral}, \ref{figure: octahedral}, and \ref{figure: icosahedral} is non-orientable. This is because the unstable submanifolds associated to each of these critical points is an open interval, and the action of the stabilizer of each such critical point is a 180-degree rotation of $S^2$ about an axis through the critical point. Thus, this action reverses the orientation of the embedded open intervals. If we were to include these index 1 critical points in the chain complex, then $CM^{\text{orb}}_*$ would have rank three, with \[CM^{\text{orb}}_0\cong CM^{\text{orb}}_1\cong CM^{\text{orb}}_2\cong\Z. \]
Note that it is not possible to define a differential on this purported chain complex with homology isomorphic to $H_*(S^2/H;\Z)\cong H_*(S^2;\Z)$. 
Indeed, the correct chain complex, obtained by discarding the non-orientable index 1 critical points, has rank two: \[CM_0^{\text{orb}}\cong CM_2^{\text{orb}}\cong\Z,\,\,\, CM_1^{\text{orb}}=0,\]  and has vanishing differential, producing isomorphic homology $H_*(S^2/H;\Z)\cong H_*(S^2;\Z)$.
\end{remark}

Next, we explain why it is necessary to discard the non-orientable critical points to orient the gradient trajectories.

\begin{remark}\label{remark: nonorientable2}
(Discarding non-orientable critical points to orient the gradient trajectories) \\ Let $p$ and $q$ be orientable critical points in $S^2$ with Morse index difference equal to 1. Let $x:\R\to S^2/H$ be a negative gradient trajectory of $f_H$ from $[p]$ to $[q]$. Because $p$ and $q$ are orientable, the value of $\epsilon(\widetilde{x})\in\{\pm1\}$ is independent of any choice of lift of $x$ to a negative gradient trajectory $\widetilde{x}:\R\to M$ of $f$ in $S^2$. We define $\epsilon(x)$ to be this value. Conversely, if one of the points $p$ or $q$ is non-orientable, then there exist two lifts of $x$ with opposite signs, making the choice $\epsilon(x)$ dependent on choice of lift.
\end{remark}
Recall that $\partial^{\text{orb}}:CM_k^{\text{orb}}\to CM_{k-1}^{\text{orb}}$ is defined as follows. Let $p\in\text{Crit}_k^+(f_H)$ and $q\in\text{Crit}_{k-1}^+(f_H)$ be orientable critical points then \[\langle \partial^{\text{orb}} p, q\rangle:=\sum_{x\in\mathcal{M}(p,q)}\epsilon(x)\frac{|H_p|}{|H_x|}\in\Z.\]  

As previously mentioned, for any finite $H\subset\SO(3)$, the quotient $S^2/H$ is an orbifold 2-sphere. When $H$ is cyclic, the orbifold 2-sphere $S^2/H$ resembles a lemon shape, featuring two orbifold points, and is immediately homeomorphic to $S^2$. If not cyclic, $H$ is dihedral, or polyhedral. A fundamental domain for the $H$-action on $S^2$ in these two latter cases can be taken to be an isosceles, closed, geodesic triangle, denoted $\Delta_{H}\subset S^2$. These geodesic triangles $\Delta_{H}$ are identified by the shaded regions of $S^2$ in Figure \ref{figure: dihedral fundamental domain} for $H=\D_{2n}$ and Figure \ref{figure: tetrahedral fundamental domain} for $H=\T$.
\begin{figure}[!htb]
    \centering
    \begin{minipage}{.5\textwidth}
        \centering
        \includegraphics[width=0.8\textwidth]{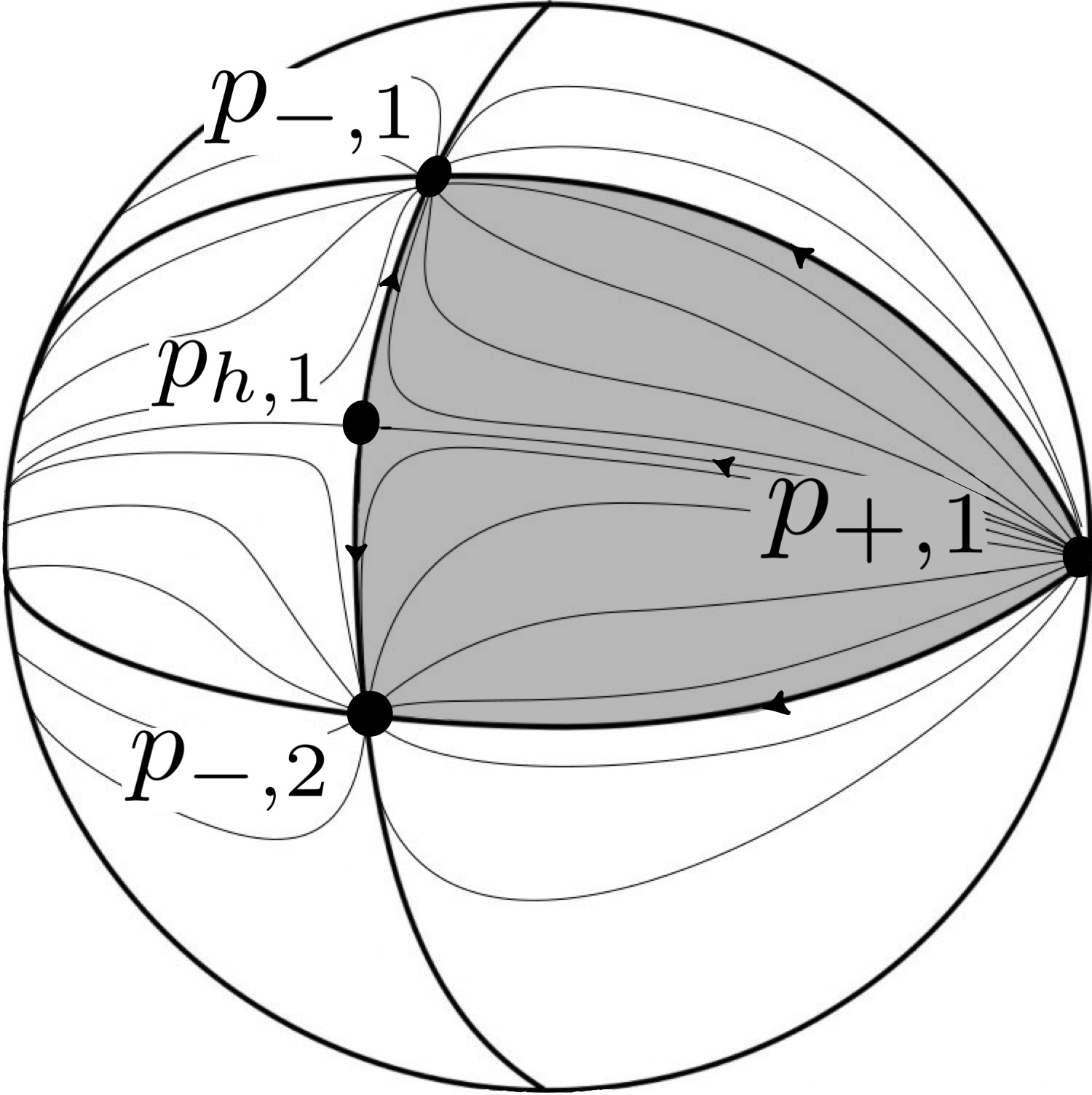}
        \caption{Fundamental domain $\Delta_{\D_{2n}}$}
        \label{figure: dihedral fundamental domain}
    \end{minipage}%
    \begin{minipage}{0.5\textwidth}
        \centering
        \includegraphics[width=0.8\textwidth]{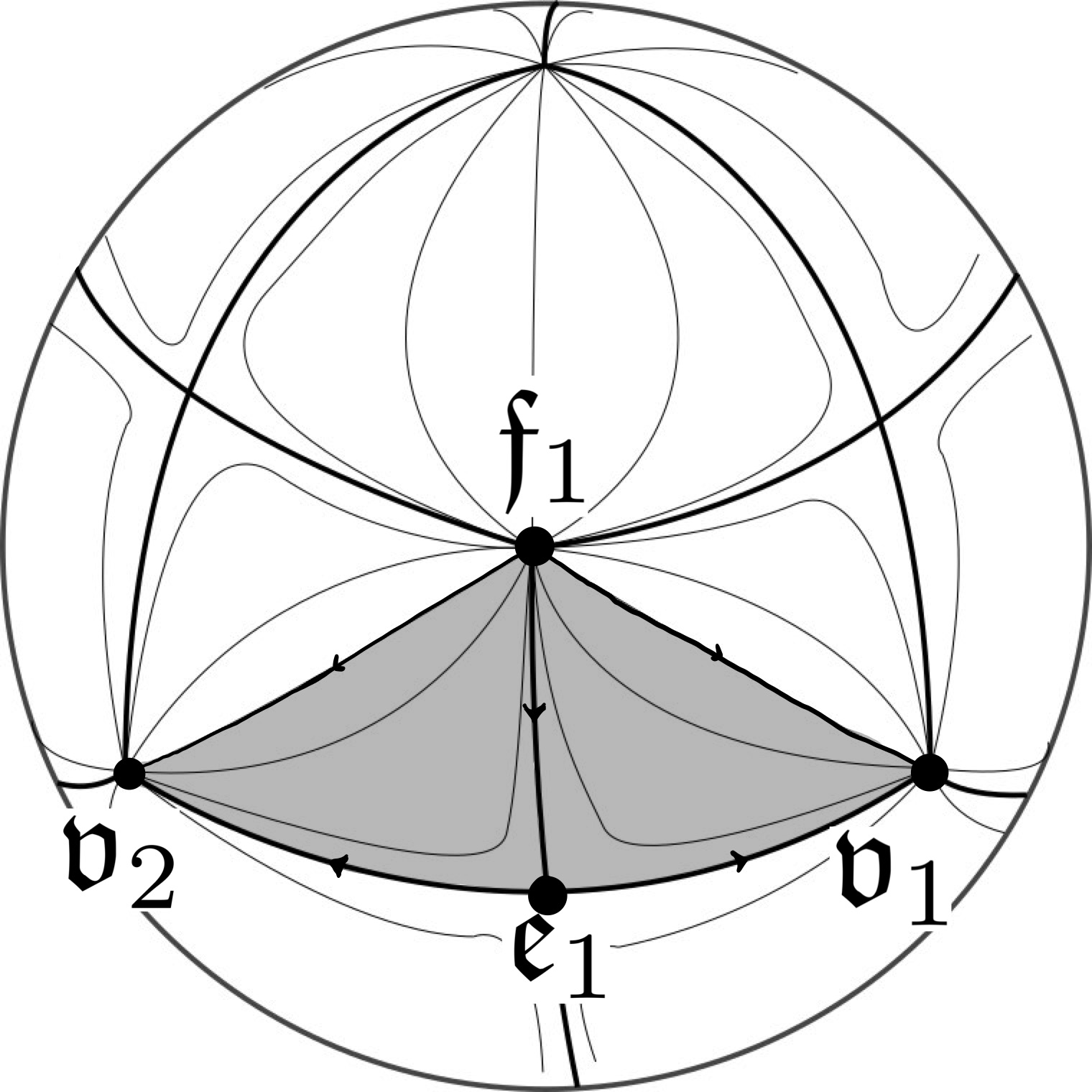}
        \caption{Fundamental domain $\Delta_{\T}$}
        \label{figure: tetrahedral fundamental domain}
    \end{minipage}
\end{figure}

\begin{itemize}
    \item In Figure \ref{figure: dihedral fundamental domain}, the three vertices of $\Delta_{\D_{2n}}$are $p_{+,1}$, $p_{-,1}$, and $p_{-,2}$. We have that $S^2$ is tessellated by $|\D_{2n}|=2n$ copies of $\Delta_{\D_{2n}}$ and the internal angles of $\Delta_{\D_{2n}}$ are $\pi/2$, $\pi/2$, and $2\pi/n$; $\Delta_{\D_{2n}}$ is isosceles.
    \item In Figure \ref{figure: tetrahedral fundamental domain},  the three vertices of $\Delta_{\T}$ are $\mathfrak{f}_1$, $\mathfrak{v}_1$, and $\mathfrak{v}_2$. We have that $S^2$ is tessellated by $|\T|=12$ copies of $\Delta_{\T}$, and the internal angles of $\Delta_{\T}$ are $\pi/3$, $\pi/3$, and $2\pi/3$; $\Delta_{\T}$ is isosceles.
\end{itemize}

The triangular fundamental domains $\Delta_{\Oc}$ and $\Delta_{\I}$ for the $\Oc$ and $\I$ actions on $S^2$ are constructed  analogously to $\Delta_{\T}$. Ultimately, in every (non-cyclic) case, we have a closed, isosceles, geodesic triangle serving as a fundamental domain for the $H$-action on $S^2$. Applying the $H$-identifications on the boundary of $\Delta_{H}$ produces $S^2/H$, a quotient that is homeomorphic to $S^2$ with three orbifold points. Specifically, under the surjective quotient map restricted to the closed $\Delta_H$, depicted in Figure \ref{figure: fundamental domain}, \[\pi_{H}|_{\Delta_H}:\Delta_{H}\subset S^2\to S^2/H.\]
In terms of the critical points we obtain:

\begin{itemize}
    \item (\textcolor{blue}{blue maximum}) one orbifold point of $S^2/H$ has a preimage consisting of a single vertex of $\Delta_H$, this is an index 2 critical point in $S^2$;
    \item (\textcolor{violet}{violet saddle}) one orbifold point of $S^2/H$ has a preimage consisting of a single midpoint of an edge of $\Delta_H$, this is an index 1 critical point in $S^2$;
    \item (\textcolor{red}{red minimum}) one orbifold point of $S^2/H$ has a preimage consisting of two vertices of $\Delta_H$; both are index 0 critical points in $S^2$. 
\end{itemize}
Figure \ref{figure: fundamental domain} depicts the attaching map for $\Delta_H$ along the boundary and these points.

\begin{figure}[h]
    \centering
    \includegraphics[width=0.8\textwidth]{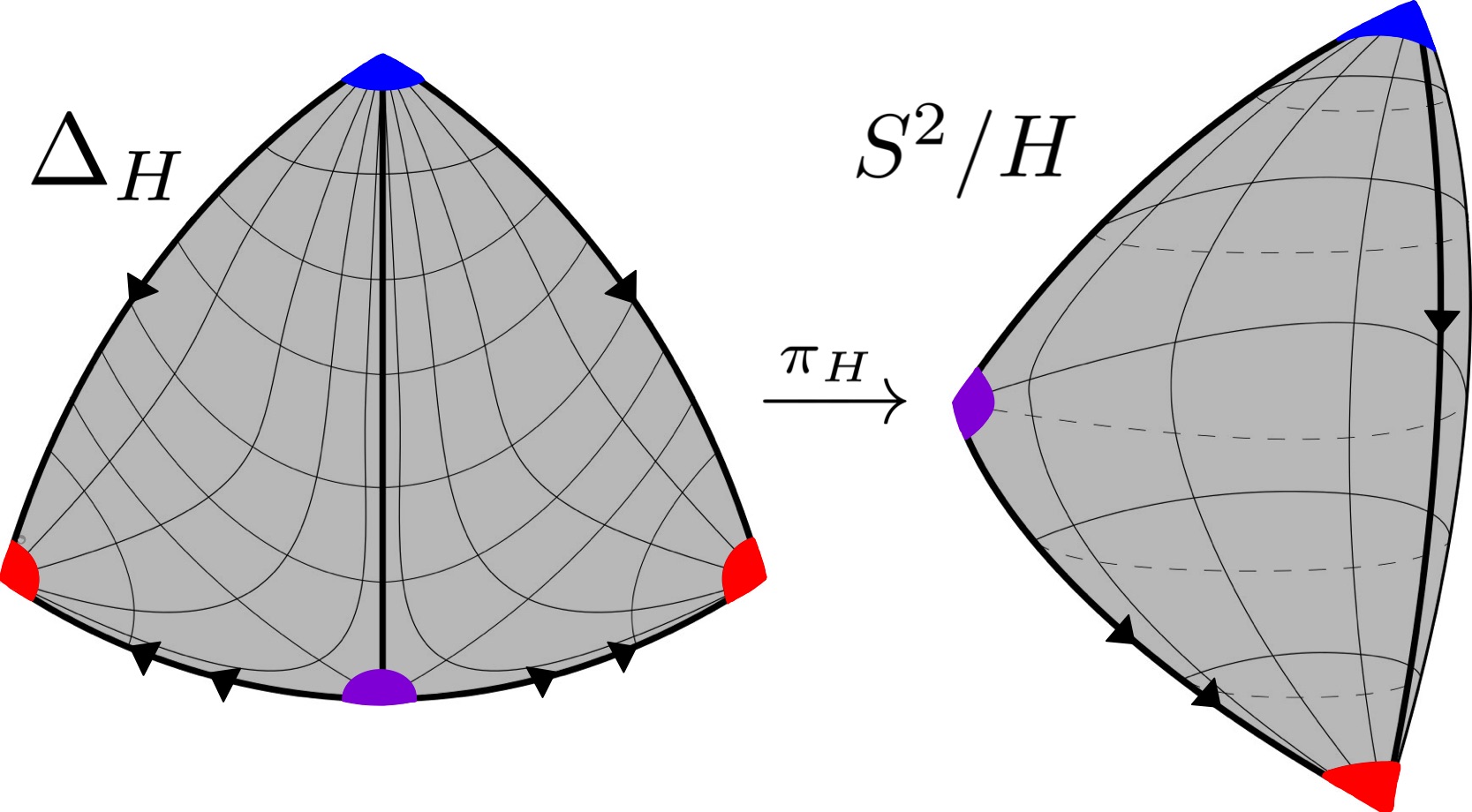}
    \caption{A triangular fundamental domain $\Delta_H$ produces $S^2/H$, a topological $S^2$ with three orbifold points when $H$ is non-cyclic. The directional markers on the boundary of $\Delta_H$ indicating the identifications under the $H$-action simultaneously realize the orbifold Morse trajectories.}
    \label{figure: fundamental domain}
\end{figure}

We now specialize to the case $H=\T$ and study the geometry of $S^3/\T^*$ and $S^2/\T$. This choice $H=\T$ makes the examples and diagrams concrete; note that a choice of $H=\D$, $\Oc$, or $\I$ produces similar geometric scenarios.  More generally it is expected that for prequantization orbibundles that the orbifold Morse flow lines are in correspondence with $S^1$-invariant holomorphic cylinders, but this has only been established in certain cases, cf. Haney-Mark \cite{HM} and Nelson-Weiler \cite{NW2}.

Using the notation to be introduced in Section \ref{subsection: polyhedral}, the three orbifold points of $S^2/\T$ are denoted $\mathfrak{v}$, $\mathfrak{e}$, and $\mathfrak{f}$, which are critical points of the orbifold Morse function $f_{\T}$ of index 0, 1, and 2, respectively. Furthermore, for small $\varepsilon>0$, we have three embedded nondegenerate Reeb orbits, $\mathcal{V}$, $\mathcal{E}$, and $\mathcal{F}$ in $S^3/\T^*$ of $\lambda_{\T^*,\varepsilon}$, projecting to the respective orbifold critical points under $\fp:S^3/\T^*\to S^2/\T$. Figure \ref{figure: calzone} illustrates this data. 

\begin{figure}[h]
    \centering
    \includegraphics[width=0.85\textwidth]{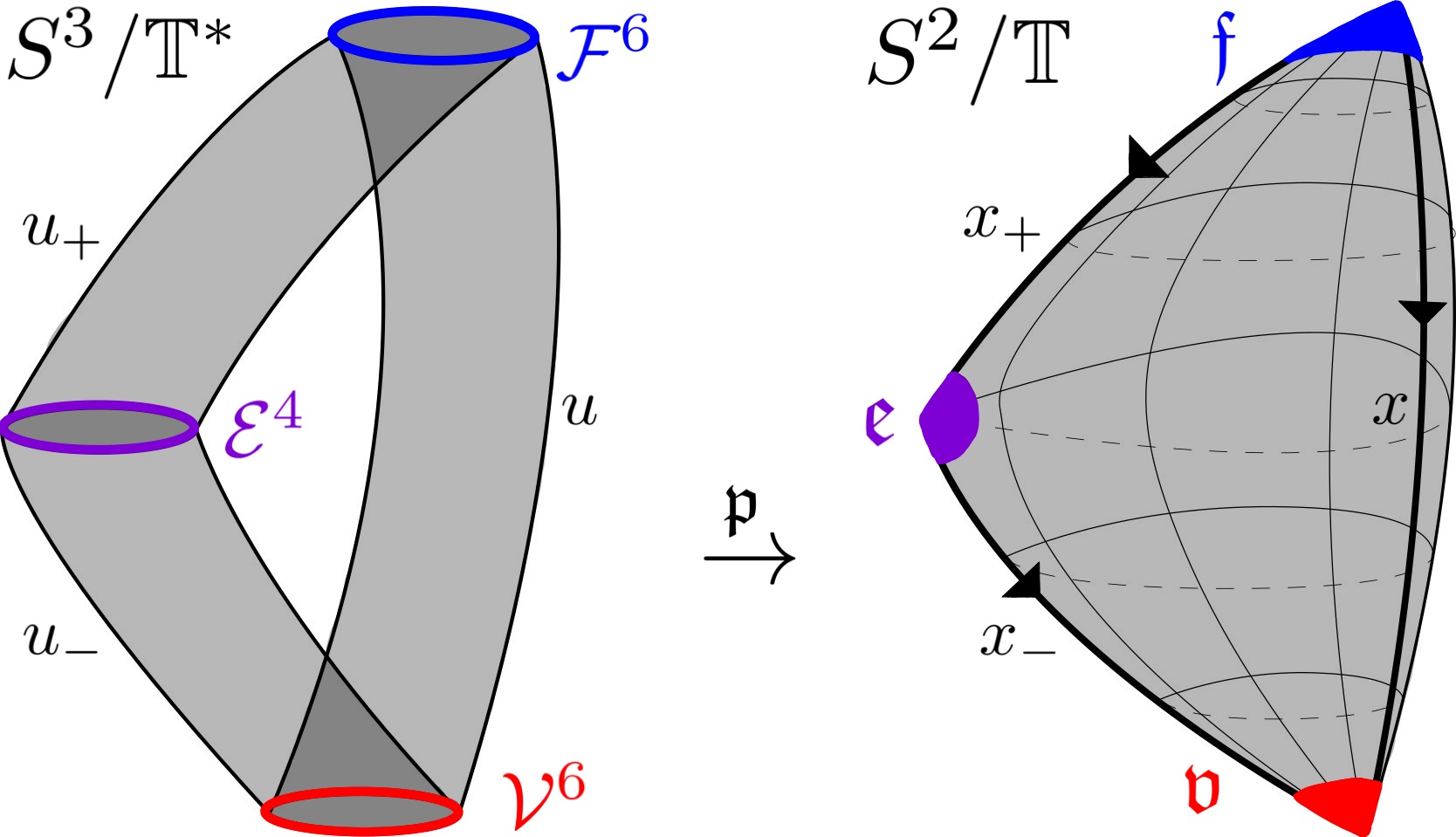}
    \caption{The Seifert projection $\fp$ takes Reeb orbits and cylinders of $S^3/\T^*$ to orbifold critical points and Morse trajectories of $S^2/\T$. The depicted cylinders in $S^3/\T^*$ should be understood as the images of infinite cylinders in the symplectization under the projection $S^3/\T^*\times\R\to S^3/\T^*$.}
    \label{figure: calzone}
\end{figure}

In Section \ref{subsection: cylindrical contact homology as an analogue of orbifold Morse homology} we explained how bad Reeb orbits in cylindrical contact homology are analogous to non-orientable critical points in orbifold Morse theory. We explicitly realize this analogy geometrically with $\fp:S^3/\T^*\to S^2/\T$. In Section \ref{subsection: polyhedral}, we will show that the even iterates $\mathcal{E}^{2k}$ are examples of bad Reeb orbits, and by Remark \ref{remark: nonorientable1}, $\mathfrak{e}$ is a non-orientable critical point of $f_{\T}$. The projection $\fp$ maps the bad Reeb orbits $\mathcal{E}^{2k}$ to the non-orientable critical point $\mathfrak{e}$.

 Next we consider the relationships between the moduli spaces of $J$-holomorphic cylinders and gradient flow lines. The orders of $\mathcal{V}$, $\mathcal{E}$, and $\mathcal{F}$ in $\pi_1(S^3/\T^*)$ are 6, 4, and 6, respectively (see Section \ref{subsubsection:  binary polyhedral}). Thus, by the correspondence \eqref{equation: induced cylinder 2} in Section \ref{subsection: cylinders over orbifold Morse trajectories}, we have the following identifications between moduli spaces of orbifold Morse flow lines of $S^2/\T$ and $J$-holomorphic cylinders in $\R\times S^3/\T^*$ (with respect to the complex structure $J$ described in Remark \ref{remark: the specific almost complex structure}):
\begin{align}
    \mathcal{M}(\mathfrak{f},\mathfrak{e}) &\cong\mathcal{M}^J(\mathcal{F}^6,\mathcal{E}^4)/\R \label{equation: correspondence +},\\
    \mathcal{M}(\mathfrak{e},\mathfrak{v}) &\cong\mathcal{M}^J(\mathcal{E}^4,\mathcal{V}^6)/\R \label{equation: correspondence -},\\
    \mathcal{M}(\mathfrak{f},\mathfrak{v}) &\cong\mathcal{M}^J(\mathcal{F}^6,\mathcal{V}^6)/\R \label{equation: correspondence 0}.
\end{align}
Correspondences \eqref{equation: correspondence +} and \eqref{equation: correspondence -} are between singleton sets. Indeed, let $x_+$ be the unique orbifold Morse flow line from $\mathfrak{f}$ to $\mathfrak{e}$ in $S^2/\T$, and let $x_-$ be the unique orbifold Morse flow line from $\mathfrak{e}$ to $\mathfrak{v}$, depicted in Figure \ref{figure: calzone}. Then we have corresponding cylinders $u_+$ and $u_-$ from $\mathcal{F}^6$ to $\mathcal{E}^4$, and from $\mathcal{E}^4$ to $\mathcal{V}^6$, respectively:

\begin{align}
    \{x_+\}=\mathcal{M}(\mathfrak{f},\mathfrak{e}) &\cong\mathcal{M}^J(\mathcal{F}^6,\mathcal{E}^4)/\R=\{u_+\},\label{equation: singleton1}\\
    \{x_-\}=\mathcal{M}(\mathfrak{e},\mathfrak{v}) &\cong\mathcal{M}^J(\mathcal{E}^4,\mathcal{V}^6)/\R=\{u_-\}.\label{equation: singleton2}
\end{align}
These cylinders $u_{\pm}$ are depicted in Figure \ref{figure: calzone}. Additionally, note that the indices of the corresponding objects agree:
\begin{align*}
    \text{ind}(x_+)&=\text{ind}_{f_{\T}}(\mathfrak{f})-\text{ind}_{f_{\T}}(\mathfrak{e})=2-1=1, \\
    \text{ind}(u_+)&=\mu_{\CZ}(\mathcal{F}^6)-\mu_{\CZ}(\mathcal{E}^4)=5-4=1,
\end{align*}
and 
\begin{align*}
    \text{ind}(x_-)&=\text{ind}_{f_{\T}}(\mathfrak{e})-\text{ind}_{f_{\T}}(\mathfrak{v})=1-0=1, \\
    \text{ind}(u_-)&=\mu_{\CZ}(\mathcal{E}^4)-\mu_{\CZ}(\mathcal{V}^6)=4-3=1.
\end{align*}

Next, we consider the third correspondence of moduli spaces in \eqref{equation: correspondence 0}. As in Figure \ref{figure: calzone}, we have that $\mathcal{M}(\mathfrak{f},\mathfrak{v})$ is diffeomorphic to a 1-dimensional open interval. For any $x\in\mathcal{M}(\mathfrak{f},\mathfrak{v})$, we have that \[\text{ind}(x)=\text{ind}_{f_{\T}}(\mathfrak{f})-\text{ind}_{f_{\T}}(\mathfrak{v})=2,\] algebraically verifying that the moduli space of orbifold flow lines must be 1-dimensional. On the other hand, take any cylinder $u\in\mathcal{M}^J(\mathcal{F}^6,\mathcal{V}^6)/\R$. Now, 
\[\text{ind}(u)=\mu_{\CZ}(\mathcal{F}^6)-\mu_{\CZ}(\mathcal{V}^6)=5-3=2,\]
 verifying that this moduli space of cylinders is $1$-dimensional, as expected by \eqref{equation: correspondence 0}. 

Note that both of the open 1-dimensional moduli spaces in \eqref{equation: correspondence 0} admit a compactification by broken objects. We can see explicitly from Figure \ref{figure: calzone} that both ends of the 1-dimensional moduli space $\mathcal{M}(\mathfrak{f},\mathfrak{v})$ converge to the {same} once-broken orbifold Morse trajectory, $(x_+,x_-)\in\mathcal{M}(\mathfrak{f},\mathfrak{e})\times\mathcal{M}(\mathfrak{e},\mathfrak{v})$. In particular, the compactification $\overline{\mathcal{M}(\mathfrak{f},\mathfrak{v})}$ is a topological $S^1$, obtained by adding the single point $(x_+,x_-)$ to an open interval, and we write \[\overline{\mathcal{M}(\mathfrak{f},\mathfrak{v})}=\mathcal{M}(\mathfrak{f},\mathfrak{v})\,\bigsqcup\, \bigg(\mathcal{M}(\mathfrak{f},\mathfrak{e})\times\mathcal{M}(\mathfrak{e},\mathfrak{v})\bigg). \]

An identical phenomenon occurs for the compactification of the cylinders. That is, both ends of the 1-dimensional interval $\mathcal{M}^J(\mathcal{F}^6,\mathcal{V}^6)/\R$ converge to the {same} once broken building of cylinders $(u_+,u_-)$. The compactification $\overline{\mathcal{M}^J(\mathcal{F}^6,\mathcal{V}^6)/\R}$ is a topological $S^1$, obtained by adding a single point $(u_+,u_-)$ to an open interval, and we write \[\overline{\mathcal{M}^J(\mathcal{F}^6,\mathcal{V}^6)/\R}=\mathcal{M}^J(\mathcal{F}^6,\mathcal{V}^6)/\R\,\bigsqcup\,\bigg(\mathcal{M}^J(\mathcal{F}^6,\mathcal{E}^4)/\R\times\mathcal{M}^J(\mathcal{E}^4,\mathcal{V}^6/\R)\bigg).\]

In Section \ref{subsection: cylindrical contact homology as an analogue of orbifold Morse homology}, we argued that the differentials of cylindrical contact homology and orbifold Morse homology are structurally identical due to the similarities in the compactifications of the 1-dimensional moduli spaces. This is due to the fact that in both theories, a once broken building can serve as a limit of {multiple} ends of a 1-dimensional moduli space. Our examples depict this phenomenon:
\begin{itemize}
    \item The broken building $(x_+,x_-)$ of orbifold Morse flow lines serves as the limit of {both} ends of the open interval $\mathcal{M}(\mathfrak{f},\mathfrak{v})$.
    \item The broken building $(u_+,u_-)$ of $J$-holomorphic cylinders serves as the limit of {both} ends of the open interval $\mathcal{M}^J(\mathcal{F}^6,\mathcal{V}^6)/\R$.
\end{itemize}

Another analogy is highlighted in this example. In both homology theories, it is possible for a sequence of flow lines or cylinders between orientable objects to break along an intermediate non-orientable object (see \cite[Example 2.10]{CH}). For example:
\begin{itemize}
    \item There is a  sequence $x_n\in\mathcal{M}(\mathfrak{f},\mathfrak{v})$ converging to the broken building $(x_+,x_-)$, which breaks at $\mathfrak{e}$. The critical points $\mathfrak{f}$ and $\mathfrak{v}$ are orientable, whereas $\mathfrak{e}$ is non-orientable.
    \item There is a  sequence $u_n\in\mathcal{M}^J(\mathcal{F}^6,\mathcal{V}^6)/\R$ converging to the broken building $(u_+,u_-)$, which breaks along the orbit $\mathcal{E}^4$. The Reeb orbits $\mathcal{F}^6$ and $\mathcal{V}^6$ are good, whereas $\mathcal{E}^4$ is bad.
\end{itemize}

As explained in Remark \ref{remark: nonorientable2}, we cannot assign a value $\epsilon(x_{\pm})\in\{\pm1\}$ nor a value $\epsilon(u_{\pm})\in\{\pm1\}$ to the objects $x_{\pm}$ and $u_{\pm}$, because they have a non-orientable limiting object. This difficulty complicates the proof that $\partial^2=0$, as we cannot write the signed count as a sum involving terms of the form $\epsilon(x_+)\epsilon(x_-)$ or $\epsilon(u_+)\epsilon(u_-)$. One can nevertheless show that a once broken building breaking along a non-orientable object is utilized by an {even} number of ends of the 1-dimensional moduli space, and that a cyclic group action on this set of even number of ends interchanges the orientations. Using this fact, one shows $\partial^2=0$ (see \cite[Remark 5.3]{CH} and \cite[\S 4.4]{HN}). We see this explicitly in our examples:
\begin{itemize}
    \item The once broken building $(x_+,x_-)$ is the limit of {two} ends of the 1-dimensional moduli space of orbifold trajectories; one positive and one negative end.
    \item The once broken building $(u_+,u_-)$ is the limit of {two} ends of the 1-dimensional moduli space of orbifold trajectories; one positive and one negative end.
\end{itemize}
\begin{remark}\label{remark: when d squared is not zero} (Including non-orientable objects complicates $\partial^2=0$) In Section \ref{subsection: cylindrical contact homology as an analogue of orbifold Morse homology} we saw that one discards bad Reeb orbits and non-orientable orbifold critical points as generators in cylindrical contact homology and orbifold Morse homology respectively, in order to achieve $\partial^2=0$. Using our understanding of moduli spaces from  Figure \ref{figure: calzone}, we show why $\partial^2=0$ could not reasonably hold if we were to include the non-orientable critical point $\mathfrak{e}$ and bad Reeb orbit $\mathcal{E}^4$ in the corresponding chain complexes. Suppose that we have some coherent way of assigning $\pm1$ to the trajectories $x_{\pm}$ and cylinders $u_{\pm}$. Now, due to equations \eqref{equation: singleton1} and \eqref{equation: singleton2}, we would have in the orbifold case
\begin{align*}
    \partial^{\text{orb}}\mathfrak{f}&=\dfrac{\epsilon(x_+)|\T_{\mathfrak{f}}|}{|\T_{x_+}|}\mathfrak{e}=3\epsilon(x_+)\mathfrak{e}\\
    \partial^{\text{orb}}\mathfrak{e}&=\dfrac{\epsilon(x_-)|\T_{\mathfrak{e}}|}{|\T_{x_-}|}\mathfrak{v}=2\epsilon(x_-)\mathfrak{v}\\
    \implies &\langle \left( \partial^{\text{orb}} \right )^2 \mathfrak{f},\mathfrak{v}\rangle=6\epsilon(x_+)\epsilon(x_-)\neq0,
\end{align*}
where we have used that $|\T_{\mathfrak{f}}|=3$, $|\T_{\mathfrak{e}}|=2$, and $|\T_{x_{\pm}}|=1$. Similarly, again due to equations \eqref{equation: singleton1} and \eqref{equation: singleton2}, we would have
\begin{align*}
    \partial\mathcal{F}^6&=\dfrac{\epsilon(u_+)d(\mathcal{F}^6)}{d(u_+)}\mathcal{E}^4=3\epsilon(u_+)\mathcal{E}^4\\
    \partial\mathcal{E}^4&=\dfrac{\epsilon(u_-)d(\mathcal{E}^4)}{d(u_-)}\mathcal{V}^6=2\epsilon(u_-)\mathcal{V}^6\\
    \implies &\langle  \partial^2\mathcal{F}^6,\mathcal{V}^6\rangle=6\epsilon(u_+)\epsilon(u_-)\neq0,
\end{align*}
where we have used that $d(\mathcal{F}^6)=6$, $d(\mathcal{E}^4)=4$, and $d(u_{\pm})=2$.
\end{remark}

Recall that the multiplicity of a $J$-holomorphic cylinder $u$ divides the multiplicity of the limiting Reeb orbits $\gamma_{\pm}$. The following remark uses the example of this section to demonstrate it need not be the case that $d(u)=\text{GCD}(d(\gamma_+),d(\gamma_-))$. 
\begin{remark}\label{remark: multiplicity of u is not always gcd} 
By \eqref{equation: correspondence 0}, $\mathcal{M}^J(\mathcal{F}^6,\mathcal{V}^6)/\R$ is nonempty, and must contain some $u$. We see that $m(\mathcal{F}^6)=m(\mathcal{V}^6)=6$, so that \[\text{GCD}(d(\mathcal{F}^6),d(\mathcal{V}^6))=6.\] Suppose for contradiction's sake that that $d(u)=6$, and consider the underlying somewhere injective $J$-holomorphic cylinder $v$. It must be the case that $u=v^6$ and that $v\in\mathcal{M}^J(\mathcal{F},\mathcal{V})/\R$. The existence of such a $v$ implies that $\mathcal{F}$ and $\mathcal{V}$ represent the same free homotopy class of loops in $S^3/\T^*$. However, we will determine in Section \ref{subsubsection: binary polyhedral} that these Reeb orbits represent distinct homotopy classes (see Table \ref{table: Tetrahedral homotopy classes of Reeb orbits}). Thus, $d(u)$ is not equal to the GCD.
\end{remark}

\section{Filtered cylindrical contact homology}\label{section: computation of filtered contact homology}
A finite subgroup of $\SU(2)$ is either cyclic, conjugate to the binary dihedral group $\D^*_{2n}$, or conjugate to a binary polyhedral group $\T^*$, $\Oc^*$, or $\I^*$. If  subgroups $G_1$ and $G_2$ satisfy $A^{-1}G_2A=G_1$, for $A\in\SU(2)$, then the map $S^3\to S^3$, $p\mapsto A\cdot p$, descends to a strict conactomorphism   $(S^3/G_1,\lambda_{G_1}) \to (S^3/G_2,\lambda_{G_2})$, preserving the Reeb dynamics. Thus, we compute the contact homology of $S^3/G$ for a particular choice of $G$. 
The action threshold used to compute the filtered homology depends on $G$; for $N\in\N$, $L_N\in\R$ is given by: 
\begin{enumerate}[(i)]
     \itemsep-.35em
    \item $2\pi N-\frac{\pi}{n}$ when $G$ is cyclic of order $n$;
    \item $2\pi N-\frac{\pi}{2n}$ when $G$ is conjugate to $\D_{2n}^*$;
    \item $2\pi N-\frac{\pi}{10}$ when $G$ is conjugate to $\T^*$, $\Oc^*$, or $\I^*$.
\end{enumerate}

\subsection{Cyclic subgroups} \label{subsection: cyclic}

From \cite[Theorem 1.5]{AHNS}, the positive $S^1$-equivariant symplectic homology of the link of the $A_n$ singularity, $L_{A_n}\subset\C^3$, with contact structure $\xi_0=TL_{A_n}\cap J_{\C^3}(TL_{A_n})$, satisfies:
\[SH_*^{+,S^1}(L_{A_n},\xi_0)=\begin{cases} \Q^{n} & *=1,\\ \Q^{n+1} & *\geq3\,\, \mbox{and odd} \\ 0 &\mbox{else.}\end{cases}\]
Furthermore, \cite{BO} prove that there are restricted classes of contact manifolds whose cylindrical contact homology (with a degree shift) is isomorphic to its positive $S^1$-equivariant symplectic homology, when both are defined over $\Q$ coefficients. Indeed, we note an isomorphism by inspection when we compare this symplectic homology with the cylindrical contact homology of $(L_{A_n},\xi_0)\cong(S^3/G,\xi_G)$ for $G\cong\Z_{n+1}$ from Theorem \ref{theorem: main}. Although this cylindrical contact homology is computed in \cite[Theorem 1.36]{N2}, we recompute these groups using a direct limit of filtered contact homology to present the general structure of the computations to come in the dihedral and polyhedral cases.

Let $G\cong\Z_{n}$ be a finite cyclic subgroup of $G$ of order $n$. If $|G|=n$ is even, with $n=2m$, then $P:G\to H:=P(G)$ has nontrivial two element kernel, and $H$ is cyclic of order $m$. Otherwise, $n$ is odd, $P:G\to H=P(G)$ has trivial kernel, and $H$ is cyclic of order $n$. 

By conjugating $G$ if necessary, we can assume that $H$ acts on $S^2$ by rotations around the vertical axis through $S^2$. The height function  $f:S^2\to[-1,1]$ is Morse, $H$-invariant, and provides precisely two fixed points; the north pole, $\mathfrak{n}\in S^2$ featuring $f(\mathfrak{n})=1$,  and the south pole, $\mathfrak{s}\in S^2$ where $f(\mathfrak{s})=-1$.  For small $\varepsilon$, we can expect to see iterates of two embedded Reeb orbits, denoted $\gamma_{\mathfrak{n}}$ and $\gamma_{\mathfrak{s}}$, of $\lambda_{G,\varepsilon}:=(1+\varepsilon\mathfrak{p}^*f_H)\lambda_{G}$ in $S^3/G$ as the only generators of the filtered chain groups. Both $\gamma_{\mathfrak{n}}$ and $\gamma_{\mathfrak{s}}$ are elliptic and parametrize the exceptional fibers in $S^3/G$ over the two orbifold points of $S^2/H$.

Select $N\in \N$. Lemma \ref{lemma: ActionThresholdLink} produces an $\varepsilon_N>0$ for which if $\varepsilon\in(0,\varepsilon_N]$, then all orbits in $\mathcal{P}^{L_N}(\lambda_{G,\varepsilon})$ are nondegenerate and are iterates of $\gamma_{\mathfrak{s}}$ or $\gamma_{\mathfrak{n}}$,  whose actions satisfy 
\begin{equation}\label{equation: action cyclic}
    \mathcal{A}(\gamma_{\mathfrak{s}}^k)=\frac{2\pi k(1-\varepsilon)}{n},\,\,\,\,\,\mathcal{A}(\gamma_{\mathfrak{n}}^k)=\frac{2\pi k(1+\varepsilon)}{n}.
\end{equation} Thus the $L_N$-filtered chain complex is $\Q$-generated by the Reeb orbits $\gamma_{\mathfrak{s}}^k$ and $\gamma_{\mathfrak{n}}^k$, for $1\leq k<nN$. With respect to the trivialization $\tau_G$, the rotation numbers of $\gamma_{\mathfrak{s}}$ and $\gamma_{\mathfrak{n}}$  satisfy
 \[\theta_{\mathfrak{s}}=\frac{2}{n}-\varepsilon\frac{1}{n(1-\varepsilon)},\,\,\,\,\,\,\theta_{\mathfrak{n}}=\frac{2}{n}+\varepsilon\frac{1}{n(1+\varepsilon)}.\] 
 (See \cite[\S 2.2]{N2} for a definition of rotation numbers.) If  $\varepsilon_N$ is sufficiently small then \begin{equation}\label{equation: CZ indices cyclic}
    \mu_{\CZ}(\gamma_{\mathfrak{s}}^k)=2\Bigl\lceil \frac{2k}{n} \Bigr\rceil-1,\,\,\,\,\,\mu_{\CZ}(\gamma_{\mathfrak{n}}^k)=2\Bigl\lfloor \frac{2k}{n} \Bigr\rfloor+1.
\end{equation}
For $i\in\Z$, let $c_i$ denote the number of $\gamma\in\mathcal{P}^{L_N}(\lambda_{G,\varepsilon_N})$  with $|\gamma|=\mu_{\CZ}(\gamma)-1=i$. Then:
\begin{itemize}
    \itemsep-.35em
    \item $c_0=n-1$,    
    \item $c_{2i}=n$ for $0<i<2N-1$,
    \item $c_{4N-2}=n-1$,
    \item $c_i=0$ for all other $i$ values.
\end{itemize} 
By \eqref{equation: CZ indices cyclic}, if $\mu_{\CZ}(\gamma^k_{\mathfrak{n}})=1$, then $k<n/2$, so the orbit $\gamma^k_{\mathfrak{n}}$ is not contractible. If $\mu_{\CZ}(\gamma^k_{\mathfrak{s}})=1$, then by \eqref{equation: CZ indices cyclic}, $k\leq n/2$ and  $\gamma^k_{\mathfrak{s}}$ is not contractible. Thus,  $\lambda_N:=\lambda_{G,\varepsilon_N}$ is $L_N$-dynamically convex and so by \cite[Thm.~1.3]{HN},\footnote{One hypothesis of \cite[Thm.~1.3]{HN} requires that all contractible Reeb orbits $\gamma$ satisfying $\mu_{\CZ}(\gamma)=3$ must be embedded. This fails in our case by considering the contractible $\gamma_{\mathfrak{s}}^n$, which is not embedded yet satisfies $\mu_{\CZ}(\gamma_{\mathfrak{s}}^n)=3$. In the arXiv v2 of this paper, we proved in \S 4.3 why we do not need this additional hypothesis.} a generic choice $J_N\in\mathcal{J}(\lambda_N)$ provides a well defined filtered chain complex, yielding the isomorphism 
\begin{align*}
    CH_*^{L_N}(S^3/G,\lambda_N, J_N)&\cong\bigoplus_{i=0}^{2N-1}\Q^{n-2}[2i]\oplus\bigoplus_{i=0}^{2N-2} H_*(S^2;\Q)[2i]\\
    &=\begin{cases} \Q^{n-1} & *=0, \, 4N-2 \\
\Q^{n} & *=2i, 0<i<2N-1 \\ 0 &\mbox{else.}\end{cases}
\end{align*}
 This follows from the good contributions to  $c_i$, which is 0 for odd $i$, implying $\partial^{L_N}=0$. This proves Theorem \ref{theorem: main} in the cyclic case, because $G$ is abelian and $|\text{Conj}(G)|=n$, after appealing to Theorem \ref{theorem: cobordisms induce inclusions}, which permits taking a direct limit over inclusions of these groups.

\subsection{Binary dihedral groups $\D^*_{2n}$}\label{subsection: dihedral}

The binary dihedral group $\D^*_{2n}\subset \SU(2)$ has order $4n$ and projects to the dihedral group $\D_{2n}\subset \SO(3)$, which has order $2n$, under the cover $P:\SU(2)\to \SO(3)$. With the quantity $n$ understood, these groups will respectively be denoted $\D^*$ and $\D$. The group $\D^*$ is generated by the two matrices\[A=\begin{pmatrix} \zeta_n & 0 \\ 0 & \overline{\zeta_n}\end{pmatrix} \hspace{1cm} B=\begin{pmatrix} 0 & -1 \\ 1 & 0\end{pmatrix},\]
where $\zeta_n:=\exp(i\pi/n)$ is a primitive $2n^{\text{th}}$ root of unity. These matrices satisfy the relations $B^2=A^n=-\text{Id}$ and $BAB^{-1}=A^{-1}$. The group elements may be enumerated as follows: \[\D^*=\{A^kB^l:0\leq k< 2n, \, 0\leq l\leq 1 \}.\] 
By applying \eqref{equation: P in coordinates}, the following matrices generate $\D\subset\SO(3)$:
\[a:=P(A)=\begin{pmatrix} 1 & 0 & 0\\ 0 & \cos{(2\pi/n)} & -\sin{(2\pi/n)} \\ 0 & \sin{(2\pi/n)} & \cos{(2\pi/n)} \end{pmatrix} \hspace{1cm} b:=P(B)=\begin{pmatrix}-1 & 0 & 0 \\ 0 & 1 & 0 \\ 0 & 0 & -1\end{pmatrix}.\]
 There are three types of fixed points in $S^2$ of the $\D$-action, categorized as follows:
 
\vspace{.25cm}

\textbf{Morse index 0}: $p_{-,k}:=(0,\cos{((\pi+2k\pi)/n)}, \sin{((\pi+2k\pi)/n)})\in \text{Fix}(\D)$, for $k\in\{1,\dots, n\}$. We have that $p_{-,n}$ is a fixed point of $ab$ and $p_{-,k}=a^k\cdot p_{-,n}$. Thus, $p_{-,k}$ is a fixed point of $a^k(ab)a^{-k}=a^{2k+1}b$. These $n$ points enumerate a $\D$-orbit in $S^2$, and so the isotropy subgroup of $\D$ associated to any of the $p_{-,k}$ is of order 2 and is generated by $a^{2k+1}b\in\D$. The point $p_-\in S^2/\D$ denotes the image of any $p_{-,k}$ under $\pi_{\D}$.
    
    \vspace{.25cm}

    \textbf{Morse index 1}: $p_{h,k}:=(0,\cos{(2k\pi/n)}, \sin{(2k\pi/n)})\in \text{Fix}(\D)$, for $k\in\{1,\dots, n\}$. We have that $p_{h,n}$ is a fixed point of of $b$ and  $p_{h,k}=a^k\cdot p_{h,n}$. Thus, $p_{h,k}$ is a fixed point of $a^k(b)a^{-k}=a^{2k}b$. These $n$ points enumerate a $\D$-orbit in $S^2$, and so the isotropy subgroup of $\D$ associated to any of the $p_{h,k}$ is of order 2 and is generated by  $a^{2k}b\in\D$. The point $p_h\in S^2/\D$ denotes the image of any $p_{h,k}$ under $\pi_{\D}$. 
    
        \vspace{.25cm}

    \textbf{Morse index 2}: $p_{+,1}=(1,0,0)$ and $p_{+,2}=(-1,0,0)$. These  are the fixed points of $a^k$, for $0<k<n$, and together enumerate a single two element $\D$-orbit. The isotropy subgroup associated to either of the points is cyclic of order $n$ in $\D$, generated by $a$.  The point $p_+\in S^2/\D$ denotes the image of any one of these two points under $\pi_{\D}$.
     \vspace{.25cm}

There exists a $\D$-invariant, Morse-Smale function $f$ on $(S^2,\omega_{\FS}(\cdot,j \cdot))$, with $\text{Crit}(f)=\text{Fix}(\D)$, which descends to an orbifold Morse function $f_{\D}:S^2/\D\to\R$, constructed in Section \ref{appendix: constructing morse functions}. Furthermore, there are stereographic coordinates at:
\begin{enumerate}[(i)]
    \itemsep-.35em
    \item the points $p_{-,k}$, in which $f$ takes the form $(x^2+y^2)/2-1$ near $(0,0)$;
    \item the points $p_{h,k}$, in which $f$  takes the form $(x^2-y^2)/2$  near $(0,0)$;
    \item the  points $p_{+,k}$, in which $f$ takes the form $1-(x^2+y^2)/2$  near $(0,0)$.
\end{enumerate}
The orbifold surface $S^2/\D$ is homeomorphic to $S^2$ and has three orbifold points. Lemma \ref{lemma: CZdihedral} identifies the Reeb orbits of $\lambda_{\D^*, \varepsilon}=(1+\varepsilon \fp^*f_{\D})\lambda_{\D^*}$ that appear in the filtered chain complex and computes their Conley Zehnder indices. 

\begin{lemma}\label{lemma: CZdihedral}
Fix $N\in\N$. Then there exists an $\varepsilon_N>0$ such that for all $\varepsilon\in(0,\varepsilon_N]$, every $\gamma\in\mathcal{P}^{L_N}(\lambda_{\D^*,\varepsilon})$ is nondegenerate and projects to an orbifold critical point of $f_{\D}$ under $\fp$, where $L_N=2\pi N-\pi/2n$. If $c_i$ denotes the number of $\gamma\in \mathcal{P}^{L_N}(\lambda_{\D^*,\varepsilon})$ with $|\gamma|=i$, then 
\begin{itemize}
 \itemsep-.35em
    \item $c_i=0$ if $i<0$ or $i> 4N-2$;
    \item $c_i=n+2$ for $i=0$ and $i=4N-2$, with all $n+2$ contributions by good Reeb orbits;
    \item $c_i=n+3$ for even $i$, $0<i<4N-2$, with all $n+3$ contributions by good Reeb orbits;
    \item $c_i=1$ for odd $i$, $0<i<4N-2$, and this contribution is by a bad Reeb orbit.
\end{itemize}
\end{lemma}

\begin{proof}

Apply Lemma \ref{lemma: ActionThresholdLink} to $L_N=2\pi N-\frac{\pi}{2n}$ to obtain $\varepsilon_N$. Now, if $\varepsilon\in(0,\varepsilon_N]$, we have that every $\gamma\in\mathcal{P}^{L_N}(\lambda_{\P^*,\varepsilon})$ is nondegenerate and projects to an orbifold critical point of $f_{\D}$. We now study the actions and indices of these  orbits.

\vspace{.25cm}
    \textbf{Orbits over $p_-$}: Let $e_-$ denote the embedded Reeb orbit of $\lambda_{\D^*,\varepsilon}$ which projects to $p_-\in S^2/\D$. By Lemmas \ref{lemma: covering multiplicity} and \ref{lemma: orbits}, $e_-^4$ lifts to an embedded Reeb orbit of $\lambda_{\varepsilon}$ in $S^3$ with action $2\pi(1-\varepsilon)$, projecting to some $p_{-,j}$. Thus, $\mathcal{A}(e_- )=\pi(1-\varepsilon)/2$, so $\mathcal{A}(e^k_-)=k\pi (1-\varepsilon)/2$. Hence, our chain complex will be generated by only the $k=1,2,\dots, 4N-1$ iterates. Using Remark \ref{remark: same local models} and Proposition \ref{proposition: flow}, we see that the linearized Reeb flow of $\lambda_{\D^*,\varepsilon}$ along $e^k_-$ with respect to trivialization $\tau_{\D^*}$ is given by the family of matrices $M_t=\mathcal{R}(2t-t\varepsilon)$ for $t\in[0,k\pi(1-\varepsilon)/2]$, where we have used that $f(p_{-,j})=-1$ and that we have stereographic coordinates $\psi$ at the point $p_{-,j}$ such that $H(f,\psi)=\text{Id}$. We see that $e_-^k$ is elliptic with rotation number $\theta_-^k=k/2-k\varepsilon/4(1-\varepsilon)$, thus
    \[\mu_{\CZ}(e_-^k)=2\Bigl\lceil \frac{k}{2}-\frac{k\varepsilon}{4(1-\varepsilon)} \Bigr\rceil-1=2\Bigl\lceil \frac{k}{2} \Bigr\rceil-1,\] 
    where the last step is valid by reducing $\varepsilon_{N}$ if necessary.
    
    \vspace{.25cm}
    
    \textbf{Orbits over $p_h$}: Let $h$ denote the embedded Reeb orbit of $\lambda_{\D^*,\varepsilon}$ which projects to $p_h\in S^2/\D$. By Lemmas \ref{lemma: covering multiplicity} and \ref{lemma: orbits}, $h^4$ lifts to an embedded Reeb orbit of $\lambda_{\varepsilon}$ in $S^3$ with action $2\pi$, projecting to some $p_{h,j}$. Thus, $\mathcal{A}(h)=\pi/2$, so $\mathcal{A}(h^k)=k\pi/2$. Hence, our chain complex will be generated only the  $k=1,2,\dots, 4N-1$ iterates.
    
    To see that $h$ is a hyperbolic Reeb orbit, we  consider its 4-fold cover $h^4$. By again using Remark \ref{remark: same local models} and Proposition \ref{proposition: flow}, one may compute the linearized Reeb flow, noting that the lifted Reeb orbit projects to $p_{h,j}$ where $f(p_{h,j})=0$, and that we have stereographic coordinates $\psi$ at $p_{h,j}$ such that $H(f,\psi)=\text{Diag}(1,-1)$. We evaluate the matrix at $t=2\pi$ to see that the linearized return map associated to $h^4$ is 
    \[\exp\begin{pmatrix}0 & 2\pi\varepsilon \\ 2\pi\varepsilon&0\end{pmatrix}=\begin{pmatrix}\cosh{(2\pi\varepsilon)} &\sinh{(2\pi\varepsilon)}\\\sinh{(2\pi\varepsilon)}&\cosh{(2\pi\varepsilon)}\end{pmatrix}.\] 
    The eigenvalues of this matrix are $\cosh{(2\pi\varepsilon)}\pm\sinh{(2\pi\varepsilon)}$. So long as $\varepsilon$ is small, these eigenvalues are real and positive, so that $h^4$ is positive hyperbolic, implying that $h$ is also hyperbolic. If $\mu_{\CZ}(h)=I$, then by Corollary \ref{corollary: CZ sphere}, $4I=\mu_{\CZ}(h^4)=4$. Hence $I=1$ and $h$ is negative hyperbolic. 
    \vspace{.25cm}
    
    \textbf{Orbits over $p_+$}: Let $e_+$ denote the embedded Reeb orbit of $\lambda_{\D^*,\varepsilon}$ which projects to $p_+\in S^2/\D$. By Lemmas \ref{lemma: covering multiplicity} and \ref{lemma: orbits}, the  $2n$-fold cover $e_-^{2n}$ lifts to some embedded Reeb orbit of $\lambda_{\varepsilon}$ in $S^3$ with action $2\pi(1+\varepsilon)$, projecting to some $p_{+,j}$. Thus, $\mathcal{A}(e_+)=\pi(1+\varepsilon)/n$, so $\mathcal{A}(e^k_+)=k\pi(1+\varepsilon)/n$ and so our chain complex will be generated by only the $k=1,2,\dots, 2nN-1$ iterates. Using Remark \ref{remark: same local models} and Proposition \ref{proposition: flow}, we see that the linearized Reeb flow of $\lambda_{\D^*,\varepsilon}$ along $e^k_+$ with respect to trivialization $\tau_{\D^*}$ is given by the family of matrices \[M_t=\mathcal{R}\bigg(\frac{2t}{1+\varepsilon}+\frac{t\varepsilon}{(1+\varepsilon)^2}\bigg)\,\,\, \text{for}\,\,\, t\in[0,k\pi(1+\varepsilon)/n],\] where we have used that $f(p_{+,j})=1$ and that we have stereographic coordinates $\psi$ at  $p_{+,j}$ such that $H(f,\psi)=-\text{Id}$. Thus, $e_+^k$ is elliptic with  \[\theta_+^k=\frac{k}{n}+\frac{\varepsilon k}{2n(1+\varepsilon)}, \,\,\mu_{\CZ}(e_+^k)=1+2\Bigl\lfloor \frac{k}{n}+\frac{\varepsilon k}{2n(1+\varepsilon)} \Bigr\rfloor=1+2\Bigl\lfloor \frac{k}{n} \Bigr\rfloor,\]
    where the last step is valid for sufficiently small $\varepsilon_N$.
\end{proof}

Lemma \ref{lemma: CZdihedral} produces the sequence $(\varepsilon_N)_{N=1}^{\infty}$, which we can assume  decreases monotonically to 0 in $\R$. Define the sequence of 1-forms $(\lambda_N)_{N=1}^{\infty}$ on $S^3/\D^*$ by $\lambda_N:=\lambda_{\D^*,\varepsilon_N}$.

\begin{summary*}(Dihedral data). We have
\begin{equation} \label{equation: CZ indices dihedral}
    \mu_{\CZ}(e_-^k)=2\Bigl\lceil \frac{k}{2} \Bigr\rceil-1,\,\,\,\mu_{\CZ}(h^k)=k,\,\,\,\mu_{\CZ}(e_+^k)=2\Bigl\lfloor \frac{k}{n} \Bigr\rfloor+1,
\end{equation}
\begin{equation}\label{equation: action dihedral}
        \mathcal{A}(e_-^k)=\frac{k\pi(1-\varepsilon)}{2},\,\,\,\mathcal{A}(h^k)=\frac{k\pi}{2},\,\,\,\mathcal{A}(e_+^k)=\frac{k\pi(1+\varepsilon)}{n}.
\end{equation}
\end{summary*}

\begin{table}[h!]
\centering
 \begin{tabular}{||c | c | c | c ||} 
 \hline
Grading & Index & Orbits & $c_i$ \\ [0.5ex] 
 \hline\hline
 0 & 1 & $e_-,e_-^2,h,e_+,e_+^2,\dots,e_+^{n-1}$ & $n+2$\\ 
 \hline
 1 & 2 & $h^2$ & 1\\
 \hline
 2 & 3 & $e_-^3,e_-^4, h^3, e_+^{n}, \dots, e_+^{2n-1}$ & $n+3$\\
  \hline
 \vdots & \vdots & \vdots & \vdots \\
  \hline
 $4N-4$ & $4N-3$ & $e_-^{4N-3},e_-^{4N-2},h^{4N-3}, e_+^{(2N-2)n},\dots e_+^{(2N-1)n-1}$ & $n+3$\\
  \hline
 $4N-3$ & $4N-2$ & $h^{4N-2}$ & 1\\
    \hline
 $4N-2$ & $4N-1$ & $e_-^{4N-1},h^{4N-1}, e_+^{(2N-1)n},\dots,e_+^{2Nn-1}$ & $n+2$\\ 
 \hline
\end{tabular}
\caption{Reeb orbits of $\mathcal{P}^{L_N}(\lambda_{\D_{2n}^*,\varepsilon_N})$ and their Conley Zehnder indices}
\label{table: Dihedral CZ indices}
\end{table}

None of the orbits in the first two rows of Table \ref{table: Dihedral CZ indices} are contractible, thus $\lambda_N=\lambda_{\D^*,\varepsilon_N}$ is $L_N$-dynamically convex and by \cite[Thm.~1.3]{HN},\footnote{One hypothesis of \cite[Thm.~1.3]{HN} requires that all contractible Reeb orbits $\gamma$ satisfying $\mu_{\CZ}(\gamma)=3$ must be embedded. This fails in our case by considering the contractible $e_-^4$, which is not embedded yet satisfies $\mu_{\CZ}(e_-^4)=3$.  In the arXiv v2 of this paper, we proved in \S 4.3 why we do not need this additional hypothesis.} a generic choice $J_N\in\mathcal{J}(\lambda_N)$ provides a well-defined filtered chain complex, yielding the  isomorphism of $\Z$-graded vector spaces 
\begin{align*}
    CH_*^{L_N}(S^3/\D^*,\lambda_N, J_N)&\cong\bigoplus_{i=0}^{2N-1}\Q^{n+1}[2i]\oplus\bigoplus_{i=0}^{2N-2} H_*(S^2;\Q)[2i]\\
    &=\begin{cases} \Q^{n+2} & *=0, \, 4N-2 \\
\Q^{n+3} & *=2i, 0<i<2N-1 \\ 0 &\mbox{else.}\end{cases}.
\end{align*}
 This follows from investigating the good contributions to  $c_i$, which is 0 for odd $i$, implying $\partial^{L_N}=0$. This proves Theorem \ref{theorem: main} in the dihedral case because $|\text{Conj}(\D_{2n}^*)|=n+3$, after appealing to Theorem \ref{theorem: cobordisms induce inclusions}, which permits taking a direct limit over inclusions of these  groups.

\subsection{Binary polyhedral groups $\T^*$, $\Oc^*$, and $\I^*$} \label{subsection: polyhedral}
In contrast to the dihedral case, we opt not work with explicit matrix generators of the polyhedral groups, because the computations of the fixed points are too involved. Instead, we will take a more geometric approach. Let $\P^*\subset\SU(2)$ be some binary polyhedral group so that it is congruent to either $\T^*$, $\Oc^*$ or $\I^*$, with $|\P^*|=24$, 48, or 120, respectively. Let $\P\subset\SO(3)$ denote the image of $\P^*$ under the group homomorphism $P$. This group $\P$ is conjugate to one of $\T$, $\Oc$, or $\I$ in $\SO(3)$, and its order satisfies $|\P^*|=2|\P|$. It is known that the $\P$ action on $S^2$ is given by the symmetries of a regular polyhedron inscribed in $S^2$. The fixed point set $\text{Fix}(\P)$ is partitioned into three $\P$-orbits. Let the number of vertices, edges, and faces of the polyhedron in question be $\mathscr{V}$, $\mathscr{E}$, and $\mathscr{F}$ respectively (see Table \ref{table:polyhedral xyz}).

\vspace{.25cm}

\textbf{Vertex type fixed points}: The set $\{\mathfrak{v}_1, \mathfrak{v}_2, \dots, \mathfrak{v}_\mathscr{V}\}\subset \text{Fix}(\P)$ constitutes a single $\P$-orbit, where each $\mathfrak{v}_i$ is an inscribed vertex of the polyhedron in $S^2$. Let $\mathscr{I}_{\mathscr{V}}\in\N$ denote $|\P|/\mathscr{V}$, so that the  isotropy subgroup associated to any of the $\mathfrak{v}_i$ is cyclic of order $\mathscr{I}_{\mathscr{V}}$. Let $\mathfrak{v}\in S^2/\P$ denote the image of any of the $\mathfrak{v}_i$ under the orbifold covering map $\pi_{\P}:S^2\to S^2/\P$.

\vspace{.25cm}

\textbf{Edge type fixed points}: The set $\{\mathfrak{e}_1, \mathfrak{e}_2, \dots, \mathfrak{e}_\mathscr{E}\}\subset \text{Fix}(\P)$  constitutes a single $\P$-orbit, where each $\mathfrak{e}_i$ is the image of a midpoint of one of the edges of the polyhedron under the radial projection $\R^3\setminus\{0\}\to S^2$. Let $\mathscr{I}_{\mathscr{E}}\in\N$ denote $|\P|/\mathscr{E}$, so that the isotropy subgroup associated to any of the $\mathfrak{e}_i$ is cyclic of order $\mathscr{I}_{\mathscr{E}}$. One can see that $\mathscr{I}_{\mathscr{E}}=2$ for any choice of $\P$. Let $\mathfrak{e}\in S^2/\P$ denote the image of any of the $\mathfrak{e}_i$ under the orbifold covering map $\pi_{\P}:S^2\to S^2/\P$.

\vspace{.25cm}

\textbf{Face type fixed points}: The set $\{\mathfrak{f}_1, \mathfrak{f}_2, \dots, \mathfrak{f}_{\mathscr{F}}\}\subset \text{Fix}(\P)$ constitutes a single $\P$-orbit, where each $\mathfrak{f}_i$ is the image of a barycenter of one of the faces of the polyhedron under the radial projection $\R^3\setminus\{0\}\to S^2$. Let $\mathscr{I}_{\mathscr{F}}$ denote $|\P|/\mathscr{F}$, so that the isotropy subgroup associated to any of the $\mathfrak{f}_i$ is cyclic of order $\mathscr{I}_{\mathscr{F}}$. One can see that $\mathscr{I}_{\mathscr{F}}=3$ for any choice of $\P$. Let $\mathfrak{f}\in S^2/\P$ denote the image of any of the $\mathfrak{f}_i$ under the orbifold covering map $\pi_{\P}:S^2\to S^2/\P$.

\begin{table}[h!]
\centering
 \begin{tabular}{|| c | c | c | c | c | c | c | c | c ||} 
 \hline
 Group & Group order & $\mathscr{V}$ & $\mathscr{E}$ & $\mathscr{F}$ & $\mathscr{I}_{\mathscr{V}}$ & $\mathscr{I}_{\mathscr{E}}$ & $\mathscr{I}_{\mathscr{F}}$  & $|\text{Conj}(\P^*)|$\\ [0.5ex] 
 \hline\hline
 $\T$ & 12 & 4 & 6 & 4  & 3 & 2 & 3 & 7\\ 
 \hline
 $\Oc$ & 24 & 6 & 12 & 8 & 4 & 2 & 3 & 8\\
 \hline
 $\I$ & 60 & 12 & 30 & 20 & 5 & 2 & 3 & 9\\
 \hline
\end{tabular}
 \caption{Polyhedral quantities. Note $|\text{Cong}(\P^*)|=\mathscr{I}_{\mathscr{V}}+\mathscr{I}_{\mathscr{E}}+\mathscr{I}_{\mathscr{F}}-1.$}
 \label{table:polyhedral xyz}
\end{table}

\begin{remark}
(Dependence on choice of $\P^*$). The coordinates of the fixed point set of $\P$ are determined by the initial selection of $\P^*\subset\SU(2)$. More precisely, if $A^{-1}\P_2^*A=\P_1^*$  for $A\in\SU(2)$, then the rigid motion of $\R^3$ given by $P(A)\in\SO(3)$ takes the fixed point set of $\P_1$ to that of $\P_2$.
\end{remark}

There exists a $\P$-invariant, Morse-Smale function $f$ on $(S^2,\omega_{\FS}(\cdot,j \cdot))$, with $\text{Crit}(f)=\text{Fix}(\P)$, which descends to an orbifold Morse function $f_{\P}:S^2/\P\to\R$, constructed in Section \ref{appendix: constructing morse functions}. Furthermore, there are stereographic coordinates at
\begin{enumerate}[(i)]
    \itemsep-.35em
    \item the points $\mathfrak{v}_i$, in which $f$ takes the form $(x^2+y^2)/2-1$ near $(0,0)$;
    \item the points $\mathfrak{e}_i$, in which $f$ takes the form $(x^2-y^2)/2$ near $(0,0)$;
    \item the points $\mathfrak{f}_i$, in which $f$ takes the form $1-(x^2+y^2)/2$ near $(0,0)$.
\end{enumerate}
The orbifold surface $S^2/\P$ is homeomorphic to $S^2$ and has three orbifold points. Lemma \ref{lemma: CZpolyhedral} identifies the Reeb orbits of $\lambda_{\P^*, \varepsilon}=(1+\varepsilon \fp^*f_{\P})\lambda_{\P^*}$ that appear in the filtered chain complex and computes their Conley Zehnder indices. Let $m\in \N$ denote the integer $\mathscr{I}_{\mathscr{V}}+\mathscr{I}_{\mathscr{E}}+\mathscr{I}_{\mathscr{F}}-1$ (equivalently, $m=|\text{Conj}(\P^*)|$, see Table \ref{table:polyhedral xyz}).

\begin{lemma} \label{lemma: CZpolyhedral}
Fix $N\in\N$. Then there exists an $\varepsilon_N>0$ such that, for all $\varepsilon\in(0,\varepsilon_N]$, every $\gamma\in\mathcal{P}^{L_N}(\lambda_{\P^*,\varepsilon})$ is nondegenerate and projects to an orbifold critical point of $f_{\P}$ under $\fp$, where $L_N:=2\pi N-\pi/10$. If $c_i$ denotes the number of $\gamma\in \mathcal{P}^{L_N}(\lambda_{\P^*,\varepsilon})$ with $|\gamma|=i$, then

\begin{enumerate}
    \itemsep-.35em
    \item $c_i=0$ if $i<0$ or $i> 4N-2$,
    \item $c_i=m-1$ for $i=0$ and $i=4N-2$, with all contributions by good Reeb orbits;
    \item $c_i=m$ for even $i$, $1<i<4N-1$, with all contributions by good Reeb orbits;
     \item $c_i=1$ for odd $i$, $0<i<4N-2$, and this contribution is by a bad Reeb orbit.
\end{enumerate}
\end{lemma}

\begin{proof}
Apply Lemma \ref{lemma: ActionThresholdLink} to $L_N=2\pi N-\frac{\pi}{10}$ to obtain $\varepsilon_N$. If $\varepsilon\in(0,\varepsilon_N]$, then every $\gamma\in\mathcal{P}^{L_N}(\lambda_{\P^*,\varepsilon})$ is nondegenerate and projects to an orbifold critical point of $f_{\P}$. We  investigate the actions and Conley Zehnder indices of these three types of orbits. Our reasoning will largely follow that used in the proof of Lemma \ref{lemma: CZdihedral}, and so some details will be omitted.

\vspace{.25cm}

    \textbf{Orbits over $\mathfrak{v}$}: Let $\mathcal{V}$ denote the embedded Reeb orbit of $\lambda_{\P^*,\varepsilon}$ in $S^3/\P^*$ which projects to $\mathfrak{v}\in S^2/\P$. One computes that $\mathcal{A}(\mathcal{V}^k)=k\pi(1-\varepsilon)/\mathscr{I}_{\mathscr{V}}$, and so the iterates $\mathcal{V}^k$ are included for all $k<2N\mathscr{I}_{\mathscr{V}}$. The orbit $\mathcal{V}^k$ is elliptic with:
    \[\theta_{\mathcal{V}}^k=\frac{k}{\mathscr{I}_{\mathscr{V}}}-\frac{\varepsilon k}{2\mathscr{I}_{\mathscr{V}}(1-\varepsilon)}, \,\,\,\,\,\mu_{\CZ}(\mathcal{V}^k)=2\Bigl\lceil \frac{k}{\mathscr{I}_{\mathscr{V}}} \Bigr\rceil-1.\]
    
    \vspace{.25cm}

    \textbf{Orbits over $\mathfrak{e}$}: Let $\mathcal{E}$ denote the embedded Reeb orbit of $\lambda_{\P^*,\varepsilon}$ in $S^3/\P^*$ which projects to $\mathfrak{e}\in S^2/\P$. By a similar study of the orbit $h$ of Lemma \ref{lemma: CZdihedral}, one sees that $\mathcal{A}(\mathcal{E}^k)=k\pi/2$, so the iterates $\mathcal{E}^k$ are included for all $k<4N$. Like the dihedral Reeb orbit $h$, $\mathcal{E}$ is negative hyperbolic with $\mu_{\CZ}(\mathcal{E})=1$, thus $\mu_{\CZ}(\mathcal{E}^k)=k$. The even iterates of $\mathcal{E}$ are bad Reeb orbits.
    
    \vspace{.25cm}
    
    \textbf{Orbits over $\mathfrak{f}$}: Let $\mathcal{F}$ denote the embedded Reeb orbit of $\lambda_{\P^*,\varepsilon}$ in $S^3/\P^*$ which projects to $\mathfrak{f}\in S^2/\P$. One computes that $\mathcal{A}(\mathcal{F}^k)=k\pi(1+\varepsilon)/Z$, and so the iterates $\mathcal{F}^k$ are included for all $k<6N$. The orbit $\mathcal{F}^k$ is elliptic with:
    \[\theta_{\mathcal{F}}^k=\frac{k}{3}+\frac{\varepsilon k}{6(1+\varepsilon)}, \,\,\,\,\,\mu_{\CZ}(\mathcal{F}^k)=2\Bigl\lfloor \frac{k}{3} \Bigr\rfloor+1=2\Bigl\lfloor \frac{k}{3} \Bigr\rfloor+1.\]
\end{proof}

Lemma \ref{lemma: CZpolyhedral} produces the sequence $(\varepsilon_N)_{N=1}^{\infty}$. Define the sequence of 1-forms $(\lambda_N)_{N=1}^{\infty}$ on $S^3/\P^*$  by $\lambda_N:=\lambda_{\P^*,\varepsilon_N}$.

\begin{summary*}
(Polyhedral data). We have
\begin{equation} \label{equation: CZ indices polyhedral}
    \mu_{\CZ}(\mathcal{V}^k)=2\Bigl\lceil \frac{k}{\mathscr{I}_{\mathscr{V}}} \Bigr\rceil-1,\,\,\,\mu_{\CZ}(\mathcal{E}^k)=k,\,\,\,\mu_{\CZ}(\mathcal{F}^k)=2\Bigl\lfloor \frac{k}{3} \Bigr\rfloor+1,
\end{equation}

\begin{equation}\label{equation: action polyhedral}
        \mathcal{A}(\mathcal{V}^k)=\frac{k\pi(1-\varepsilon)}{\mathscr{I}_{\mathscr{V}}},\,\,\,\mathcal{A}(\mathcal{E}^k)=\frac{k\pi}{2},\,\,\,\mathcal{A}(\mathcal{F}^k)=\frac{k\pi(1+\varepsilon)}{3}.
\end{equation}
\end{summary*}

\begin{table}[h!]
\centering
 \begin{tabular}{||c | c | c | c ||} 
 \hline
Grading & Index & Orbits & $c_i$ \\ [0.5ex]
 \hline\hline
 0 & 1 & $\mathcal{V}, \dots, \mathcal{V}^{\mathscr{I}_{\mathscr{V}}}, \mathcal{E}, \mathcal{F}, \mathcal{F}^2$ & $m-1$\\ 
 \hline
 1 & 2 & $\mathcal{E}^2$ & 1\\
 \hline
 2 & 3 & $\mathcal{V}^{\mathscr{I}_{\mathscr{V}}+1}, \dots, \mathcal{V}^{2\mathscr{I}_{\mathscr{V}}}, \mathcal{E}^3, \mathcal{F}^3, \mathcal{F}^4, \mathcal{F}^5$ & $m$\\
  \hline
 \vdots & \vdots & \vdots & \vdots \\
  \hline
 $4N-4$ & $4N-3$ & $\mathcal{V}^{(2N-2)\mathscr{I}_{\mathscr{V}}+1}, \dots, \mathcal{V}^{(2N-1)\mathscr{I}_{\mathscr{V}}}, \mathcal{E}^{4N-3}, \mathcal{F}^{6N-6}, \mathcal{F}^{6N-5}, \mathcal{F}^{6N-4}$ & $m$\\
  \hline
 $4N-3$ & $4N-2$ & $\mathcal{E}^{4N-2}$ & 1\\
    \hline
 $4N-2$ & $4N-1$ & $\mathcal{V}^{(2N-1)\mathscr{I}_{\mathscr{V}}+1}, \dots, \mathcal{V}^{2N\mathscr{I}_{\mathscr{V}}-1}, \mathcal{E}^{4N-1}, \mathcal{F}^{6N-3}, \mathcal{F}^{6N-2}, \mathcal{F}^{6N-1}$ & $m-1$\\ 
 \hline
\end{tabular}
\caption{Reeb orbits of $\mathcal{P}^{L_N}(\lambda_{\P^*,\varepsilon_N})$ and their Conley Zehnder indices}
\label{table: polyhedral CZ indices}
\end{table}

None of the orbits in the first two rows of Table \ref{table: polyhedral CZ indices} are contractible, so $\lambda_N=\lambda_{\P^*,\varepsilon_N}$ is $L_N$-dynamically convex and so by \cite[Theorem 1.3]{HN},\footnote{One hypothesis of \cite[Th. 1.3]{HN} requires that all contractible Reeb orbits $\gamma$ satisfying $\mu_{\CZ}(\gamma)=3$ must be embedded. This fails in our case by considering the contractible $\mathcal{V}^{2\mathscr{I}_{\mathscr{V}}}$, which is not embedded yet satisfies $\mu_{\CZ}(\mathcal{V}^{2\mathscr{I}_{\mathscr{V}}})=3$.  In the arXiv v2 of this paper, we proved in \S 4.3 why we do not need this additional hypothesis.} a generic choice $J_N\in\mathcal{J}(\lambda_N)$ provides a well defined filtered chain complex, yielding the isomorphism of $\Z$-graded vector spaces

\begin{align*}
    CH_*^{L_N}(S^3/\P^*,\lambda_N, J_N)&\cong\bigoplus_{i=0}^{2N-1}\Q^{m-2}[2i]\oplus\bigoplus_{i=0}^{2N-2} H_*(S^2;\Q)[2i]\\
    &=\begin{cases} \Q^{m-1} & *=0, \, 4N-2 \\
\Q^{m} & *=2i, 0<i<2N-1 \\ 0 &\mbox{else.}\end{cases}
\end{align*}
 This follows from investigating the good contributions to  $c_i$, which is 0 for odd $i$, implying $\partial^{L_N}=0$. This proves Theorem \ref{theorem: main} in the polyhedral case because $|\text{Conj}(\P^*)|=m$, after appealing to Theorem \ref{theorem: cobordisms induce inclusions}, which permits taking a direct limit over inclusions of these  groups.

\section{Direct limits of filtered  cylindrical contact homology}\label{section: direct limits of filtered homology}
In the previous section we computed the action filtered cylindrical contact homology of the links of the simple singularities. For any finite, nontrivial subgroup $G\subset\SU(2)$, we have a sequence $(L_N,\lambda_N,J_N)_{N=1}^{\infty}$, where $L_N\to\infty$ monotonically in $\R$, $\lambda_N$ is an $L_N$-dynamically convex contact form on $S^3/G$ with kernel $\xi_G$, and $J_N\in\mathcal{J}(\lambda_N)$ is generically chosen so that 
\begin{equation}\label{equation: identification}
    CH_*^{L_N}(S^3/G,\lambda_N, J_N)\cong\bigoplus_{i=0}^{2N-1}\Q^{m-2}[2i]\oplus\bigoplus_{i=0}^{2N-2} H_*(S^2;\Q)[2i],
\end{equation}
where $m=|\text{Cong}(G)|$. For $N\leq M$, there is a natural inclusion of the vector spaces on the right hand side of \eqref{equation: identification}. 

This section establishes Theorem \ref{theorem: cobordisms induce inclusions}, yielding that exact symplectic cobordisms obtained from decreasing the Morse-Bott perturbation induce well-defined maps on filtered homology, which can be identified with these inclusions, which  completes the proof of Theorem \ref{theorem: main} as 
\[
\varinjlim_N CH_*^{L_N}(S^3/G,\lambda_N, J_N)\cong\bigoplus_{i\geq0}\Q^{m-2}[2i]\oplus\bigoplus_{i\geq0} H_*(S^2;\Q)[2i].
\]
We first explain the cobordisms and the maps they induce.

\begin{definition}
An \emph{exact symplectic cobordism} is from $(Y_+,\lambda_+)$ to $(Y_-,\lambda_-)$ is a pair $(\overline{X},\lambda)$ where $\overline{X}$ is a compact symplectic manifold with boundary $\partial \overline{X}= Y_+ - Y_-$ and $d\lambda$ is a symplectic form on $\overline{X}$ with $\lambda|_{Y_\pm} = \lambda_\pm$.  Given an exact symplectic cobordism $(\overline{X},\lambda)$, we form its \emph{completion}
\[
{X} = ((-\infty,0] \times Y_-) \sqcup_{Y_-} \overline{X}\sqcup_{Y_+} ( [0,\infty) \times Y_+)
\]
using the gluing under the following identifications.  A neighborhood of $Y_+$ in $(\overline{X},\lambda)$ can be canonically identified with $(-\epsilon, 0]_s \times Y_+$ for some $\epsilon>0$ so that $\lambda$ is identified with $e^s\lambda_+$.  Moreover, this identification is defined so that $\partial_s$ corresponds to the unique vector field $V$ such that $\iota_Vd\lambda = \lambda$.  Similarly, a neighborhood of $Y_-$ can be canonically identified with $[0,\epsilon) \times Y_-$ so that $\lambda$ is identified with $e^s\lambda_-$.

 An almost complex structure $J$ on the completion ${X}$ is said to be \emph{cobordism compatible} if $J$ is $d\lambda$-compatible on $X$ (meaning that $d\lambda(\cdot, J \cdot)$ is a Riemannian metric on $X$), and there are $\lambda_\pm$-compatible almost complex structures $J_\pm$ on $\R \times Y_\pm$ such that $J$ agrees with $J_+$ on $[0,\infty) \times Y_+$ and with $J_-$ on $(-\infty,0] \times Y_-$.
\end{definition}


We will make use of some further notation. 

\begin{notation}\label{notation: equivalence of reeb orbits}
For $\gamma_+\in\mathcal{P}^{L_N}(\lambda_N)$ and $\gamma_-\in\mathcal{P}^{L_M}(\lambda_M)$, we write $\gamma_+\sim\gamma_-$ whenever $m(\gamma_+)=m(\gamma_-)$ and both orbits project to the same orbifold point  under $\fp$. If either condition does not hold, we write $\gamma_+\nsim\gamma_-$.  Note that $\sim$ defines an equivalence relation on the disjoint union $\bigsqcup_{N\in\N}\mathcal{P}^{L_N}(\lambda_N)$. If $\gamma$ is contractible in $S^3/G$, then we write $[\gamma]=0$ in $[S^1,S^3/G]$.
\end{notation}

In the following, we consider cobordism maps $\Phi^M_N$ induced by decreasing the Morse-Bott perturbation.   We more precisely define the exact completed cobordism and space of cobordism compatible almost complex structures that we will use below.

\begin{definition}\label{def:Jnm}
We want to ensure that there are `obvious' index zero cylinders, which we henceforth refer to as \emph{cobordism trivial cylinders} connecting the nondegenerate Reeb orbits $\gamma_+\in\mathcal{P}^{L_N}(\lambda_N)$ and $\gamma_-\in\mathcal{P}^{L_M}(\lambda_M)$ satisfying $\gamma_+\sim\gamma_-$.
 To do so, we will restrict to the following contractible space of $J$ satisfying the following conditions.

Since $\ker \lambda_N = \ker \lambda_{M}$, we know $\lambda_N = e^{g_N} \lambda_{M}$ for some $g_N \in \C^\infty(Y, \R)$.  We may assume $g_N > 0$ everywhere.  Let $g\in \C^\infty(\R \times Y,\R)$ such that
\begin{itemize}
\itemsep-.35em
\item $g(s,y) = s$ for $s \in (-\infty, \varsigma)$ for some $\varsigma > 0$;
\item $g(s,y) = g_N(y) + s -1$ for $s \in (1- \varsigma, \infty)$ for some $\varsigma > 0$;
\item $\partial_s g >0$.
\end{itemize}
Since $d(e^{g(s,\cdot)}\lambda_{M})$ is symplectic, $([0,1] \times S^3/G, e^{g(s,\cdot)}\lambda_{M})$ is an exact symplectic cobordism from $(S^3/G,\lambda_{N})$ to $(S^3/G,\lambda_{M})$. 

Let $J$ be a cobordism compatible almost complex structure on $\R \times S^3/G$, which agrees with $J_N$, a generic $\lambda_N$-compatible almost complex structure on $[1,\infty)\times S^3/G$, and with $J_M$, a generic $\lambda_M$-compatible almost complex structure on $(-\infty,0]\times S^3/G$.  
We can choose $J$ so that the cobordism trivial cylinder connecting $\gamma_+$ to $\gamma_-$, where $\gamma_+ \sim \gamma_-$  is a $J$-holomorphic curve because $[1,\infty) \times {\gamma_+}$ is a $J_N$-holomorphic submanifold, $(-\infty,0] \times \gamma_-$ is a $J_M$-holomorphic submanifold, {and the compact portion connecting them, defined by the union of the Reeb orbits of $e^{g(s,\cdot)}\lambda_{M}$ in the same equivalence class, is a symplectic submanifold of $([0,1] \times S^3/G, e^{g(s,\cdot)}\lambda_{M})$}.  Note that the space of $J$ satisfying these conditions is contractible.
\end{definition}



Next we consider the space of Fredholm index zero $J$-holomorphic cylinders in $X$, denoted by $\mathcal{M}_0^J(\gamma_+,\gamma_-)$, where $J$ is subject to the conditions in Definition \ref{def:Jnm}, $\gamma_{+}\in\mathcal{P}_{\text{good}}(\lambda_N)$, and $\gamma_{-}\in\mathcal{P}_{\text{good}}(\lambda_{M})$.  We will show that $\mathcal{M}_0^J(\gamma_+,\gamma_-)$ is a compact 0-manifold, which is nonempty only when $\gamma_+ \sim \gamma_-$ and that $\# \mathcal{M}_0^J(\gamma_+,\gamma_-) =1$. From this, the map $\Phi_N^M$, defined by
\[
\Phi_N^M:CC_*(S^3/G,\lambda_N,J_N)\to CC_*(S^3/G,\lambda_M,J_M),\,\,\,\,\,\,\,\langle\Phi_N^M(\gamma_+),\gamma_-\rangle:=\sum_{u\in\mathcal{M}_0^J(\gamma_+,\gamma_-)}\epsilon(u)\frac{m(\gamma_+)}{m(u)},
\] 
is well defined. That $\Phi_N^M$ is a chain map follows from a careful analysis of the moduli spaces of index 1 cylinders that appear in these completed cobordisms. These chain maps induce \emph{continuation homomorphisms} on the cylindrical contact homology groups. Our main result in Section \ref{section: direct limits of filtered homology} is the following:

\begin{theorem}\label{theorem: cobordisms induce inclusions}
An exact completed symplectic cobordism $(X,\lambda, J)$ from $(S^3/G,\lambda_N,J_N)$ to $(S^3/G,\lambda_M,J_M)$ as in Definition \ref{def:Jnm}, for $N\leq M$, induces a well defined chain map between filtered chain complexes. The induced maps on homology $\Psi_N^M:CH_*^{L_N}(S^3/G,\lambda_N,J_N)\to CH_*^{L_M}(S^3/G,\lambda_M,J_M)$ may be identified with the standard inclusions \[\bigoplus_{i=0}^{2N-1}\Q^{m-2}[2i]\oplus\bigoplus_{i=0}^{2N-2} H_*(S^2;\Q)[2i]\hookrightarrow \bigoplus_{i=0}^{2M-1}\Q^{m-2}[2i]\oplus\bigoplus_{i=0}^{2M-2} H_*(S^2;\Q)[2i]\]
and form a directed system of graded $\Q$-vector spaces over $\N$.
\end{theorem}

There are two main steps in the proof of Theorem  \ref{theorem: cobordisms induce inclusions}.  The first is to establish compactness of the 0-dimensional moduli spaces. This is shown in Section \ref{subsection: holomorphic buildings in cobordisms} via the study of free homotopy classes of Reeb orbits in Section \ref{subsection: free homotopy classes represented by reeb orbits}.  The second is to establish that the so-called cobordism trivial cylinders are unique, namely that $\# \mathcal{M}_0^J(\gamma_+,\gamma_-) =1$. This is carried out in Section \ref{ss:unique} and relies on intersection theoretic arguments of Hutchings \cite{index,revisit} and Siefring \cite{s2}, as elucidated by Wendl \cite{wint}, in conjunction with establishing that the theoretical bounds on the winding of asymptotic eigenfunctions are achieved via a strengthening of results due to Hofer, Wysocki, and Zehnder \cite{hwz2}.  

The proof of Theorem \ref{theorem: cobordisms induce inclusions} is as follows. Automatic transversality will be used in Corollary \ref{corollary: moduli spaces are finitie} to prove that $\mathcal{M}_0^J(\gamma_+,\gamma_-)$ is a 0-dimensional manifold. By  Proposition \ref{proposition: buildings in cobordisms},  $\mathcal{M}_0^J(\gamma_+,\gamma_-)$ will be shown to be compact. From this, one concludes that $\mathcal{M}_0^J(\gamma_+,\gamma_-)$ is a finite set for $\gamma_+\in\mathcal{P}_{\text{good}}^{L_N}(\lambda_N)$ and $\gamma_-\in\mathcal{P}_{\text{good}}^{L_M}(\lambda_M)$.
Thus for $N\leq M$, we have the well-defined chain map: 
\[
\Phi_N^M:\big(CC_*^{L_N}(S^3/G,\lambda_N,J_N),\partial^{L_N}\big)\to\big(CC_*^{L_N}(S^3/G,\lambda_M,J_M),\partial^{L_N}\big).
\] 
The uniqueness result of Section \ref{ss:unique} allows us to conclude that the finite set of  cylinders in $X$,  equals that of  $\mathcal{M}^{J_N}(\gamma_+,\gamma_+)$ and that of $\mathcal{M}^{J_M}(\gamma_-,\gamma_-)$, the moduli spaces of cylinders in the symplectizations of $\lambda_N$ and $\lambda_M$, which is known to be 1, given by the  contribution of a single trivial cylinder. If $\gamma_+\nsim\gamma_-$, then Corollary \ref{corollary: moduli spaces are finitie} will imply that $\mathcal{M}_0^J(\gamma_+,\gamma_-)$ is empty, and so $\langle \Phi_N^M(\gamma_+),\gamma_-\rangle=0$. Ultimately we conclude that, given $\gamma_+\in\mathcal{P}_{\text{good}}^{L_N}(\lambda_N)$, our chain map $\Phi_N^M$ takes the form $\Phi_N^M(\gamma_+)=\gamma_-$, where $\gamma_-\in \mathcal{P}_{\text{good}}^{L_N}(\lambda_M)$ is the unique Reeb orbit satisfying $\gamma_+\sim\gamma_-$.

We let $\iota_N^M$ denote the chain map given by the inclusion of subcomplexes,
\[\iota_N^M:\big(CC_*^{L_N}(S^3/G,\lambda_M,J_M),\partial^{L_N}\big)\hookrightarrow\big(CC_*^{L_M}(S^3/G,\lambda_M,J_M),\partial^{L_M}\big).\]
The composition $\iota_N^M\circ\Phi_N^M$ a chain map. Let $\Psi_N^M$ denote the map on homology induced by this composition, that is, $\Psi_N^M=(\iota_N^M\circ\Phi_N^M)_*$. We see that \[\Psi_N^M:CH_*^{L_N}(S^3/G,\lambda_N,J_N)\to CH_*^{L_M}(S^3/G,\lambda_M,J_M)\]
satisfies $\Psi_N^M([\gamma_+])=[\gamma_-]$ whenever $\gamma_+\sim\gamma_-$, and thus $\Psi_N^M$ takes the form of the standard inclusions after making the identifications \eqref{equation: identification}.

\subsection{Compactness}\label{subsection: holomorphic buildings in cobordisms}
A \emph{holomorphic building} $B$ is a tuple $(u_1,u_2,\dots,u_n)$, where each $u_i$ is a potentially disconnected holomorphic curve in a completed (exact) symplectic cobordism 
equipped with a cobordism compatible almost complex structure. The curve $u_i$ is the $i^{\text{th}}$ \emph{level of} $B$. Each $u_i$ has a set of positive (resp. negative) ends which are positively (resp. negatively) asymptotic to a set of  Reeb orbits. For  $i\in\{1,\dots, n-1\}$, there is a bijection between the negative ends of $u_i$ and the positive ends of $u_{i+1}$, such that paired ends are asymptotic to the same Reeb orbit. The \emph{height} of $B$ is $n$.  The \emph{positive ends} of $B$ are given by the positive ends of $u_1$ and the \emph{negative ends} of $B$ are given by the negative ends of $u_n$.  The \emph{genus} of $B$ is the genus of the Riemann surface $S$ obtained by attaching ends of the domains of the $u_i$ to those of $u_{i+1}$ according to the bijections; $B$ is \emph{connected} if $S$ is connected. The \emph{index} of $B$ is $\text{ind}(B):=\sum_{i=1}^n\text{ind}(u_i)$.

All buildings that we consider will have at most one level in a nontrivial exact symplectic cobordism, with the rest in a symplectization.  Unless otherwise stated, we require that no level of $B$ be solely given in terms of a union of trivial cylinders in a symplectization, e.g. $B$ is {without trivial levels}.

\begin{remark}\label{remark: index of buildings}
The index of a connected  genus 0 building $B$ with one positive end at $\alpha$, and with $k$ negative ends at $\beta_1,...\beta_k$,  is given by: \begin{equation}\label{equation: index of buildings}\text{ind}(B)=k-1+\mu_{\CZ}(\alpha)-\sum_{i=1}^k\mu_{\CZ}(\beta_i).\end{equation}
This fact follows from an inductive argument applied to the height of $B$.  \end{remark}

The following proposition considers the relationships between the Conley Zehnder indices and actions of a pair of Reeb orbits representing the same free homotopy class in  $[S^1,S^3/G]$. 

\begin{proposition} \label{proposition: CZ and action}
Suppose $[\gamma_+]=[\gamma_-]\in[S^1,S^3/G]$ for $\gamma_+\in\mathcal{P}^{L_N}(\lambda_N)$ and $\gamma_-\in\mathcal{P}^{L_M}(\lambda_M)$.
\begin{enumerate}[\em (a)]
\itemsep-.35em
    \item If $\mu_{\CZ}(\gamma_+)=\mu_{\CZ}(\gamma_-)$ then $\gamma_+\sim\gamma_-$.
    \item If $\mu_{\CZ}(\gamma_+)<\mu_{\CZ}(\gamma_-)$ then $\mathcal{A}(\gamma_+)<\mathcal{A}(\gamma_-)$.
\end{enumerate}
\end{proposition}

The proof of Proposition \ref{proposition: CZ and action} is postponed to Section \ref{subsection: free homotopy classes represented by reeb orbits}, where we will show that it holds for cyclic, dihedral, and polyhedral groups $G$ (Lemmas \ref{lemma: cyclic cobordisms} and \ref{lemma: dihedral cobordisms}, and Proposition \ref{lemma: polyhedral cobordisms}). For any exact symplectic cobordism $(X,\lambda,J)$, Proposition \ref{proposition: CZ and action} (a) implies that $\mathcal{M}_0^J(\gamma_+,\gamma_-)$ is empty whenever $\gamma_+\nsim\gamma_-$, and (b) crucially implies that there do not exist cylinders of negative Fredholm index in $X$. Using Proposition \ref{proposition: CZ and action}, we now prove a compactness argument:

\begin{proposition}\label{proposition: buildings in cobordisms}
Fix $N< M$, $\gamma_+\in\mathcal{P}^{L_N}(\lambda_N)$, and $\gamma_-\in\mathcal{P}^{L_M}(\lambda_M)$. For $n_{\pm}\in\Z_{\geq0}$, consider a connected, genus zero building $B=(u_{n_+},\dots,  u_0, \dots, u_{-n_-})$, where $u_i$ is in the symplectization of $\lambda_N$ for $i>0$, $u_i$ is in the symplectization of $\lambda_M$ for $i<0$, and $u_0$ is in a generic, completed,  exact symplectic cobordism $(X,\lambda, J)$ from $(\lambda_N,J_N)$ to $(\lambda_M,J_M)$. If  $\mbox{\em ind}(B)=0$, with single positive puncture at $\gamma_+$ and single negative puncture at $\gamma_-$, then $n_+=n_-=0$ and $u_0\in\mathcal{M}_0^J(\gamma_+,\gamma_-)$.
\end{proposition}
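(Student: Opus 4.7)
The plan is to combine a structural analysis of $B$ with Proposition \ref{proposition: CZ and action} to show that every symplectization level would contribute strictly positive Fredholm index, thereby forcing $n_\pm=0$.

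First I would argue that each level of $B$ is a single cylinder. Since $B$ is connected of genus zero with one positive and one negative puncture overall, its dual graph (vertices $=$ components, internal edges $=$ matched pairs of ends between consecutive levels, external half-edges $=$ asymptotes) has first Betti number zero and exactly two external half-edges. A sum-of-degrees count then shows the dual graph is a path in which every vertex has total degree two, so each component, and hence each level, is a single cylinder. Write $u_i$ as a cylinder from $\alpha_i$ to $\alpha_{i-1}$, with $\alpha_{n_+}=\gamma_+$, $\alpha_{-n_--1}=\gamma_-$, $\alpha_0\in\mathcal{P}(\lambda_N)$, and $\alpha_{-1}\in\mathcal{P}(\lambda_M)$. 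Using the global trivialization $\tau_G$ of $\xi_G$, which pulls back to every such cylinder via the projection to $S^3/G$, the relative first Chern class $c_1(u_i^*\xi,\tau_G)$ vanishes, so $\text{ind}(u_i)=\mu_{\CZ}(\alpha_i)-\mu_{\CZ}(\alpha_{i-1})$, and the sum telescopes to $\text{ind}(B)=\mu_{\CZ}(\gamma_+)-\mu_{\CZ}(\gamma_-)=0$.

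Next I would apply Proposition \ref{proposition: CZ and action} level by level. For $i\neq 0$, the cylinder $u_i$ in the symplectization is non-trivial by the no-trivial-levels convention, so Stokes' theorem forces the strict action decrease $\mathcal{A}(\alpha_i)>\mathcal{A}(\alpha_{i-1})$; the contrapositive of part (b) then yields $\text{ind}(u_i)\geq 0$. I would upgrade this to $\text{ind}(u_i)\geq 1$ as follows: if $\text{ind}(u_i)=0$, part (a) would force $\alpha_i\sim\alpha_{i-1}$, but by Lemma \ref{lemma: orbits} the embedded Reeb orbit of $\lambda_{G,\varepsilon}$ over each orbifold critical point of $f_H$ is unique, so matching multiplicity and matching projection imply $\alpha_i=\alpha_{i-1}$ and hence equal actions, contradicting the strict decrease. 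For the cobordism level $u_0$, Proposition \ref{proposition: CZ and action}(b) applies directly to a cylinder with positive end in $\mathcal{P}^{L_N}(\lambda_N)$ and negative end in $\mathcal{P}^{L_M}(\lambda_M)$, giving $\text{ind}(u_0)\geq 0$. Summing,
\begin{equation*}
0=\text{ind}(B)=\text{ind}(u_0)+\sum_{i\neq 0}\text{ind}(u_i)\geq 0+(n_++n_-),
\end{equation*}
which forces $n_+=n_-=0$, so $B$ reduces to the single cylinder $u_0\in\mathcal{M}_0^J(\gamma_+,\gamma_-)$.

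The main obstacle I anticipate is the strict positivity step for symplectization levels: Proposition \ref{proposition: CZ and action}(b) on its own only yields non-negativity, and one must combine it with part (a) and the uniqueness of the embedded Reeb orbit over each orbifold critical point to upgrade to $\text{ind}(u_i)\geq 1$. A secondary point to verify is that every intermediate orbit $\alpha_i$ lies below the relevant action threshold so that Proposition \ref{proposition: CZ and action} is applicable at each interface of $B$; this follows from the monotone decrease of action along the chain starting from $\mathcal{A}(\gamma_+)<L_N\leq L_M$.
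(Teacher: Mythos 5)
Your argument breaks down at the very first structural step: the claim that connectedness, genus zero, and having exactly one positive and one negative end force every level of $B$ to be a single cylinder. Genus zero does make the dual graph a tree, but a tree with two external half-edges can still branch, because a component with extra negative ends can have those ends capped off by holomorphic \emph{planes} (leaves of the tree, with no external half-edge). Concretely, a pair of pants in the top symplectization with positive end $\gamma_+$ and two negative ends, one of them a contractible orbit that bounds an index~$\geq 2$ plane in a lower level, assembles into a connected genus-zero building with one positive and one negative end. Such configurations are genuinely possible here, since contractible Reeb orbits below the action threshold exist ($\gamma_{\mathfrak{s}}^n$, $e_-^4$, $\mathcal{V}^{2\mathscr{I}_{\mathscr{V}}}$, and their iterates), so your sum-of-degrees count does not show the dual graph is a path, and the telescoping identity $\mathrm{ind}(B)=\mu_{\CZ}(\gamma_+)-\mu_{\CZ}(\gamma_-)$ together with the level-by-level application of Proposition \ref{proposition: CZ and action} only addresses the unbranched case.

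Ruling out exactly these branched configurations is the substance of the paper's proof: $B$ is decomposed into a sub-building $B_+$ above the cobordism with negative ends $\alpha_0,\alpha_1,\dots,\alpha_k$ where $[\alpha_i]=0$ for $i>0$, the cobordism level $B_0$ with negative ends $\beta_0,\dots,\beta_l$, capping sub-buildings $B_i$, $C_i$ with no negative ends, and $B_-$ below; the index is grouped as $0=U+V+W$ with $U=\mu_{\CZ}(\gamma_+)-\mu_{\CZ}(\alpha_0)$, $V=\mu_{\CZ}(\alpha_0)-\mu_{\CZ}(\beta_0)$, $W=\mathrm{ind}(B_-)$, and each is shown nonnegative using Proposition \ref{proposition: CZ and action}(b), Stokes-type action decrease, and \cite[Proposition 2.8]{HN}; then $U=V=W=0$ plus Proposition \ref{proposition: CZ and action}(a) and an action count forces $k=l=0$ and empties $B_\pm$. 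Your proposal contains none of this machinery for the branched case, so as written it does not prove the proposition. (Your secondary worry, upgrading nonnegativity to strict positivity for nontrivial symplectization levels, is handled in the paper by \cite[Proposition 2.8]{HN} rather than by uniqueness of the embedded orbit over each orbifold point, but that is a minor difference compared with the missing treatment of capped branches.)
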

\begin{proof}
This building $B$ provides the following sub-buildings, some of which may be empty:

\medskip

\begin{minipage}[c]{0.40\textwidth}
\includegraphics[width=\textwidth]{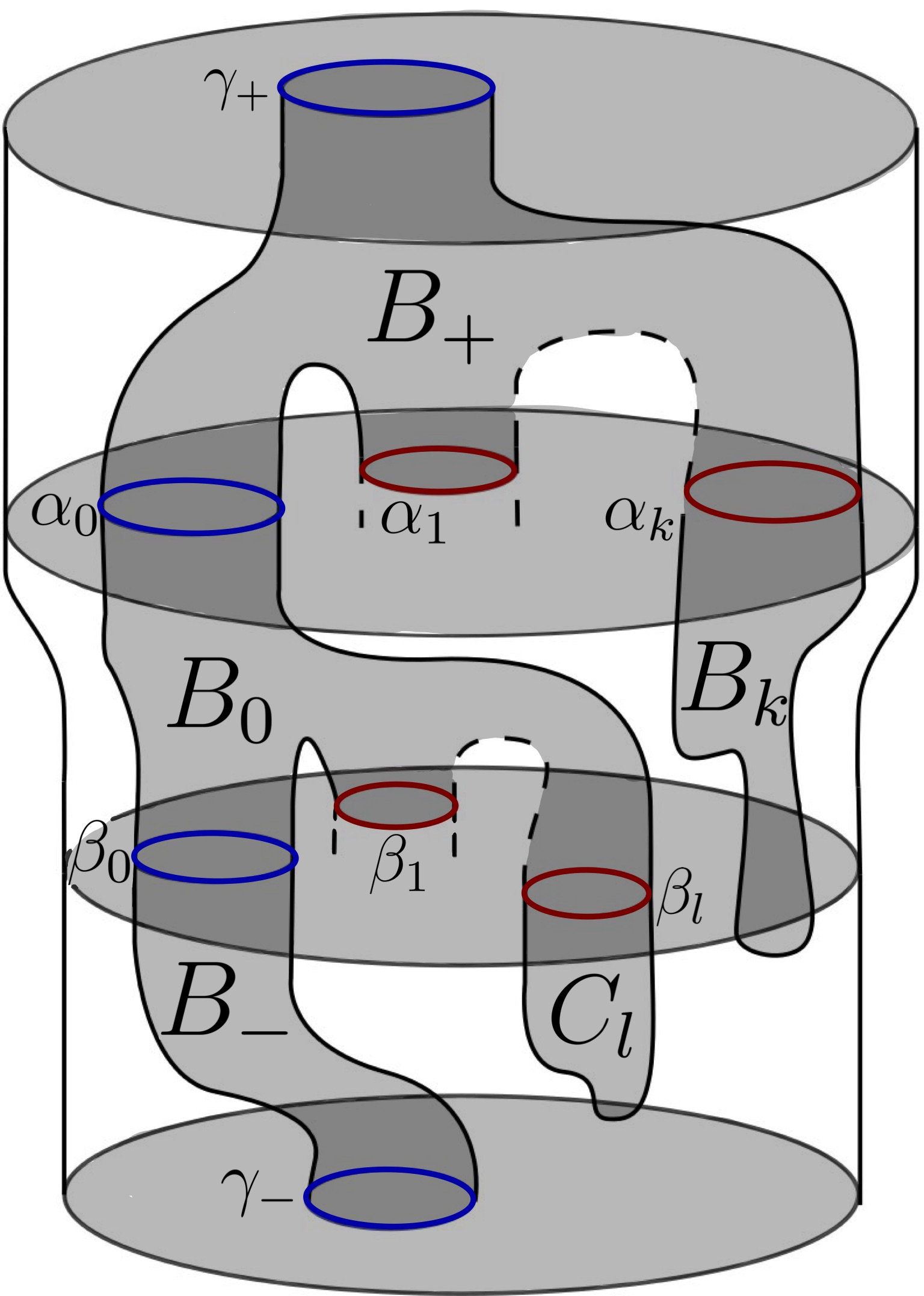}
\end{minipage}
\begin{minipage}[c]{0.5\textwidth}
\begin{itemize}[font=\small]
\itemsep-.35em
    \item $B_+$, a  building in the symplectization of $\lambda_N$, with no level consisting entirely of trivial cylinders, with one positive puncture at $\gamma_+$, $k+1$ negative punctures at  $\alpha_i\in\mathcal{P}^{L_N}(\lambda_N)$, for $i=0,\dots, k$, with $[\alpha_i]=0$ for $i>0$, and $[\gamma_+]=[\alpha_0]$.
    \item $B_0$, a height 1 building in the cobordism with one positive puncture at $\alpha_0$, $l+1$ negative punctures at $\beta_i\in\mathcal{P}^{L_N}(\lambda_M)$, for $i=0,\dots,l$, with $[\beta_i]=0$ for $i>0$, and $[\alpha_0]=[\beta_0]$.
    \item $B_i$, a  building with one positive end at $\alpha_i$, without negative ends, for $i=1,\dots, k$.
    \item $C_i$, with one positive end at $\beta_i$, without negative ends, for $i=1,2,\dots,l$.
    \item $B_-$, a  building in the symplectization of $\lambda_M$, with one positive puncture at $\beta_0$ and one negative puncture at $\gamma_-$.
\end{itemize}
\end{minipage}

\medskip

Contractible Reeb orbits are shown in red, while Reeb orbits representing the free homotopy class $[\gamma_{\pm}]$ are shown in blue. Each sub-building is connected and has genus zero. Although no level of $B$ consists entirely of trivial cylinders, some of these sub-buildings may have entirely trivial levels in a symplectization. Note that $0=\text{ind}(B)$ equals the sum of indices of the above buildings. Write $0=\text{ind}(B)=U+V+W$, where
\[U:=\text{ind}(B_+)+\sum_{i=1}^k\text{ind}(B_i),\,\,\,\,V:=\text{ind}(B_0)+\sum_{i=1}^l\text{ind}(C_i),\,\,\text{and}\,\, W:=\text{ind}(B_-).\]
We will first argue that $U$, $V$, and $W\geq0$. 

To see $U\geq0$, apply the index formula \eqref{equation: index of buildings} to each summand to compute $U=\mu_{\CZ}(\gamma_+)-\mu_{\CZ}(\alpha_0)$. If $U<0$, then  Proposition \ref{proposition: CZ and action} (b) implies $\mathcal{A}(\gamma_+)<\mathcal{A}(\alpha_0)$, which would violate the fact that action decreases along holomorphic buildings. We must have that $U\geq0$. 

To see $V\geq0$, again apply the index formula \eqref{equation: index of buildings} to find $V=\mu_{\CZ}(\alpha_0)-\mu_{\CZ}(\beta_0)$. Suppose $V<0$. Now Proposition \ref{proposition: CZ and action} (b) implies $\mathcal{A}(\alpha_0)<\mathcal{A}(\beta_0)$, contradicting the decrease of action. 

To see $W\geq 0$, consider that either $B_-$ consists entirely of trivial cylinders, or it doesn't. In the former case $W=0$. In the latter case, \cite[Prop.~2.8]{HN} implies that $W>0$. 

Because 0 is written as the sum of three non-negative integers, we conclude that $U=V=W=0$. We will combine this fact with Proposition \ref{proposition: CZ and action} to conclude that $B_{\pm}$ are empty buildings and that $B_0$ is a cylinder, concluding the proof. 

Note that $U=0$ implies $\mu_{\CZ}(\gamma_+)=\mu_{\CZ}(\alpha_0)$. Because $[\gamma_+]=[\alpha_0]$, Proposition \ref{proposition: CZ and action} (a) implies that $\gamma_+\sim\alpha_0$. This is enough to conclude $\gamma_+=\alpha_0\in\mathcal{P}(\lambda_N)$, and importantly, $\mathcal{A}(\gamma_+)=\mathcal{A}(\alpha_0)$. Noting that $\mathcal{A}(\gamma_+)\geq\sum_{i=0}^k\mathcal{A}(\alpha_i)$ (again, by decrease of action), we must have that $k=0$ and that this inequality is an equality. Thus, the buildings $B_i$ are empty for $i\neq0$, and the building $B_+$ has index 0 with only one negative end, $\alpha_0$. If $B_+$ has some nontrivial levels then \cite[Prop.~2.8]{HN} implies $0=\text{ind}(B_+)>0$. Thus, $B_+$ consists only of trivial levels. Because $B_+$ has no trivial levels, it is empty, and $n_+=0$.

Similarly, $V=0$ implies $\mu_{\CZ}(\alpha_0)=\mu_{\CZ}(\beta_0)$. Again, because $[\alpha_0]=[\beta_0]$, Proposition \ref{proposition: CZ and action} (a) implies that $\alpha_0\sim\beta_0$. Although we cannot write $\alpha_0=\beta_0$, we \emph{can} conclude that the difference $\mathcal{A}(\alpha_0)-\mathcal{A}(\beta_0)$ may be made arbitrarily small, by rescaling  $\{\varepsilon_K\}_{K=1}^{\infty}$ by some $c\in(0,1)$. Thus, the inequality $\mathcal{A}(\alpha_0)\geq\sum_{i=0}^l\mathcal{A}(\beta_i)$ forces $l=0$, implying that each $C_i$ is empty, and that $B_0$ has a single negative puncture at $\beta_0$, i.e. $B_0=(u_0)$ for $u_0\in\mathcal{M}_0^J(\alpha_0,\beta_0)$.

Finally, we consider our index 0 building $B_-$ in the symplectization of $\lambda_M$, with one positive end at $\beta_0$ and one negative end at $\gamma_-$. Again, \cite[Prop.~2.8]{HN} tells us that if $B_-$ has nontrivial levels, then $\text{ind}(B_-)>0$. Thus, all levels of $B_-$ must be trivial (this implies $\beta_0=\gamma_-$). However, because the $B_i$ and $C_j$ are empty, a trivial level of $B_-$ is a trivial level of $B$ itself, contradicting our hypothesis on $B$. Thus, $B_-$ is empty and $n_-=0$.
\end{proof}

\begin{corollary}\label{corollary: moduli spaces are finitie}
The moduli space $\mathcal{M}_0^J(\gamma_+,\gamma_-)$ is a compact, 0-dimensional manifold for generic $J$, where $\gamma_+\in\mathcal{P}^{L_N}(\lambda_N)$ and $\gamma_-\in\mathcal{P}^{L_M}(\lambda_M)$ are good, and $N<M$. Furthermore, $\mathcal{M}_0^J(\gamma_+,\gamma_-)$ is empty if $\gamma_+\nsim\gamma_-$.
\end{corollary}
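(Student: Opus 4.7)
The corollary contains two assertions: that $\mathcal{M}_0^J(\gamma_+,\gamma_-)$ is a compact $0$-dimensional manifold, and that it is empty when $\gamma_+\nsim\gamma_-$. My plan is to prove these in order, with Propositions \ref{proposition: buildings in cobordisms} and \ref{proposition: CZ and action} doing the main work and SFT compactness supplying the analytic input.

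For compactness, I would take any sequence $\{u_k\}\subset\mathcal{M}_0^J(\gamma_+,\gamma_-)$ and apply SFT compactness to extract a subsequential limit, a holomorphic building $B$ whose middle level lies in the completed cobordism $X$ and whose outer levels lie in the symplectizations of $\lambda_N$ and $\lambda_M$. Each $u_k$ being a cylinder forces $B$ to be connected and of genus zero, with a single positive puncture at $\gamma_+$, a single negative puncture at $\gamma_-$, and $\text{ind}(B)=0$. These are precisely the hypotheses of Proposition \ref{proposition: buildings in cobordisms}, whose conclusion collapses $B$ to a single cylinder $u_0\in\mathcal{M}_0^J(\gamma_+,\gamma_-)$ with no auxiliary symplectization levels. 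Thus the limit already lies in the moduli space, and sequential compactness follows. The smooth $0$-manifold structure is then a transversality statement: somewhere injective index-$0$ cylinders are regular for generic $J$ by the standard theory, while multiply covered ones will be handled by Wendl's automatic transversality criterion, as in the forthcoming Lemma \ref{lemma: generic j}.

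For the emptiness assertion, I would argue by contradiction. Suppose $u\in\mathcal{M}_0^J(\gamma_+,\gamma_-)$ exists with $\gamma_+\nsim\gamma_-$. The cylinder $u$ supplies a free homotopy from $\gamma_+$ to $\gamma_-$ through loops in $X$, and since the Liouville cobordism between two contact forms defining $\xi_G$ on $S^3/G$ is topologically trivial, $X$ deformation retracts onto $S^3/G$; composing with this retraction gives $[\gamma_+]=[\gamma_-]$ in $[S^1,S^3/G]$. Because $\xi_G$ admits the global trivialization $\tau_G$, we have $c_1(u^*\xi_G,\tau_G)=0$, so the Fredholm index formula collapses to
\[
\mu_{\CZ}(\gamma_+)-\mu_{\CZ}(\gamma_-)=\text{ind}(u)=0.
\]
Proposition \ref{proposition: CZ and action}(a) now forces $\gamma_+\sim\gamma_-$, contradicting our hypothesis, so no such $u$ exists.

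The main obstacle, as I see it, is the transversality input for multiply covered index-$0$ cylinders; this is the one nontrivial analytic ingredient and is the reason the paper invokes Wendl's criterion and defers to Lemma \ref{lemma: generic j}. Once that is in hand, the two structural inputs---compactness of limit buildings and the homotopy/index dichotomy of Proposition \ref{proposition: CZ and action}---do all the bookkeeping, and the corollary is an almost immediate synthesis.
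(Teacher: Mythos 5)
Your outline follows the paper's proof almost exactly: SFT compactness plus Proposition \ref{proposition: buildings in cobordisms} to show limits of sequences are again single cylinders in $\mathcal{M}_0^J(\gamma_+,\gamma_-)$, and Proposition \ref{proposition: CZ and action}(a) (via the homotopy provided by the cylinder and the vanishing of $c_1(u^*\xi,\tau_G)$) to get emptiness when $\gamma_+\nsim\gamma_-$; your spelled-out version of the emptiness step is fine. The one point to fix is the transversality step, which you call the main obstacle and defer to Lemma \ref{lemma: generic j}: that lemma plays no role here (it rules out bad index-2 buildings in symplectizations for the $\partial^2=0$ discussion), and there is no need to treat somewhere injective and multiply covered cylinders separately. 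The paper settles regularity of \emph{every} $u\in\mathcal{M}_0^J(\gamma_+,\gamma_-)$ in one stroke with Wendl's automatic transversality \cite[Theorem 1]{W}: the criterion $\mathrm{ind}(u)>2b-2+2g(u)+h_+(u)$ reads $0>-2$, since $b=g(u)=0$ and $h_+(u)=0$ because the good orbits in $\mathcal{P}^{L_N}(\lambda_N)$ and $\mathcal{P}^{L_M}(\lambda_M)$ are all elliptic or negative hyperbolic -- this is exactly where the hypothesis that $\gamma_\pm$ are good is used, a check your proposal leaves out.
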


\begin{proof}
To prove regularity of the cylinders $u\in\mathcal{M}_0^J(\gamma_+,\gamma_-)$, we invoke automatic transversality (\cite[Thm.~1]{W}), providing regularity of all such $u$, because the inequality \[0=\text{ind}(u)>2b-2+2g(u)+h_+(u)=-2\]
holds.  Here, $b=0$ is the number of branched points of $u$ over its underlying somewhere injective cylinder, $g(u)=0$ is the genus of the curve, and $h_+(u)$ is the number of ends of $u$ asymptotic to positive hyperbolic Reeb orbits. Because good Reeb orbits in $\mathcal{P}^{L_N}(\lambda_N)$ and $\mathcal{P}^{L_M}(\lambda_M)$ are either elliptic or negative hyperbolic, we have that $h_+(u)=0$. To prove compactness of the moduli spaces, note that a sequence in $\mathcal{M}_0^J(\gamma_+,\gamma_-)$ has a subsequence converging in the sense of \cite{BEHWZ} to a building with the properties detailed in Proposition \ref{proposition: buildings in cobordisms}. Proposition \ref{proposition: buildings in cobordisms} proves that such an object is single cylinder in $\mathcal{M}_0^J(\gamma_+,\gamma_-)$, proving compactness of $\mathcal{M}_0^J(\gamma_+,\gamma_-)$. Finally, by Proposition \ref{proposition: CZ and action} (a), the existence of $u\in\mathcal{M}_0^J(\gamma_+,\gamma_-)$ implies that $\gamma_+\sim\gamma_-$. Thus, $\gamma_+\nsim\gamma_-$ implies that $\mathcal{M}_0^J(\gamma_+,\gamma_-)$ is empty.
\end{proof}

\subsection{Uniqueness of cobordism trivial cylinders}\label{ss:unique}

The purpose of this section is to prove uniqueness of cobordism trivial cylinders.   The main technical obstruction to carrying out a proof of this is that in the $N = \infty$ Morse-Bott limit, these cylinders are no longer isolated but come in families, so one cannot naively invoke \cite[Thm. 10.32]{wendl-sft}.  However, since we are in dimension four, we can appeal to some intersection theory due to Hutchings and Siefring, as summarized by Wendl to establish the desired result, namely that $\#\mathcal{M}^J_0 (\gamma_+ , \gamma_- ) = 1$.  Our arguments rely on the special situation at hand, because the theoretical bounds on the winding of asymptotic eigenfunctions controlling the approach of the cylinders at infinity is achieved, by way of a strengthening of results originating in the work of Hofer, Wysocki, and Zehnder.  We first give a brief, mostly contained summary of the results utilized, and then combine them give the proof of our main result below.

\begin{proposition}\label{prop:unique-triv}
 Let $(X,\lambda, J)$ be an exact completed symplectic cobordism from $(S^3/G,\lambda_N,J_N)$ to $(S^3/G,\lambda_M,J_M)$ as in Definition \ref{def:Jnm}, for $N\leq M$. Then $\#\mathcal{M}^J_0 (\gamma_+ , \gamma_- ) = 1$.
 \end{proposition}

First, we recall some results about the asymptotics of holomorphic curves and the winding of the associated eigenfunctions. Next we give a mostly self contained review of intersection theory.  Finally, we put everything together to give a proof of uniqueness of cobordism trivial cylinders.

\subsubsection{Winding of asymptotic eigenfunctions}\label{sss:wind}

Let $\gamma$ be a nondegenerate embedded Reeb orbit. W define the {\em asymptotic operator\/} $L_\gamma: C^\infty(\gamma^*\xi) \longrightarrow C^\infty(\gamma^*\xi)$ from the space of smooth sections of $\xi|_{\gamma^d}$ to itself by
\[
L_\gamma=-J_{\gamma(t)}\nabla_t^R,
\]
where $\nabla^R$ denotes the connection on $\xi|_{\gamma^d}$ defined by the linearized Reeb flow along $\gamma$. The operator $L$ is symmetric and so its eigenvalues are real. Nondegeneracy of the Reeb orbit $\gamma$ implies that $\gamma^*\xi$ does not have any nonzero section which is parallel with respect to $\nabla^R$, and so zero is not an eigenvalue of $L$.

We first recall some facts pertaining to the spectral properties of the asymptotic operator and the winding numbers of its eigenfunctions, going back to \cite[\S 3]{hwz2}.  

\begin{lemma}
\label{lem:eigenfunctions}
\begin{enumerate}[\em(a)]
	\item Every (nonzero) eigenfunction $\varphi$ of $L_\gamma$ is a nonvanishing section of $\gamma^*\xi$, and hence has a well defined winding number (with respect to a chosen unitary trivialization of $\gamma^*\xi$), {which we denote by $\wind(\varphi)$.  Any two nontrivial eigenfunctions in the same eigenspace have the same winding number. }
	
\item Let $\varphi_1$ and $\varphi_2$ be linearly independent eigenfunctions of $L$ with the same winding number. Then any nontrivial linear combination of $\varphi_1$ and $\varphi_2$ is a nonvanishing section of $\gamma^*\xi$.

\item {If $\varphi_1$ and $\varphi_2$ are eigenfunctions of $L$ with eigenvalues $\lambda_1 \leq \lambda_2$ then $\wind(\varphi_1)  {\leq} \wind(\varphi_2)$. } 

\item {For every $w \in \Z$, $L_\gamma$ has exactly two eigenvalues (counting multiplicity) for which the corresponding eigenfunctions have winding number equal to $w$.}

\end{enumerate}
\end{lemma}

\begin{definition}
Given a nondegenerate closed Reeb orbit $\gamma$ and a trivialization $\tau: \gamma^*\xi \to S^1 \times \R^2$, we define
\[
\wind^\tau : \sigma(L_\gamma) \to \Z
\]
by $\wind^\tau(\lambda):=\wind^\tau(\varphi)$ where $\varphi:S^1\to\R^2$ is the expression via $\tau$ of any nontrivial  eigenfunction $\varphi \in \Gamma(\gamma^*\xi)$ with $L_\gamma \varphi = \lambda \varphi$.  By way of Lemma \ref{lem:eigenfunctions}, we can sensibly define the \emph{positive / negative extremal winding numbers} to be:
\begin{align*}
    \alpha_{+}^{\tau}(\gamma)&:=\min\{\wind_{\tau}(\eta) \ | \ \eta\in \sigma(L_\gamma)\cap (0,\infty)\},\\
    \alpha_{-}^{\tau}(\gamma)&:=\max\{\wind_{\tau}(\eta) \ | \ \eta\in \sigma(L_\gamma)\cap (-\infty, 0)\}.
\end{align*}
The parity of a Reeb orbit is given by:
\[
p(\gamma):= \alpha_{+}^{\tau}(\gamma) - \alpha_{-}^{\tau}(\gamma),
\]
which is equal to 0 when $\gamma$ is positive hyperbolic or an even iterate of a negative hyperbolic orbit and 1 when $\gamma$ is elliptic or an odd iterate of a negative hyperbolic orbit, cf. \cite[\S 3]{wendl-sft}.
\end{definition}

From \cite[\S 3]{hwz2}, we also have the following definition for the Conley-Zehnder index in terms of the winding invariants.  For any nondegenerate Reeb orbit $\gamma$ and any trivialization $\tau$ of $\gamma^*\xi$ we have
\begin{equation}\label{e:windcz}
\mu_{\CZ}^\tau(\gamma) = 2 \alpha_{-}^{\tau}(\gamma) + p(\gamma) =  2\alpha_{+}^{\tau}(\gamma) - p(\gamma)
\end{equation}

Next we review how the asymptotic behavior of the end of an asymptotically cylindrical $J$-holomorphic curve at a Reeb orbit is determined by an asymptotic eigenfunction of the corresponding asymptotic operator.  
First, to warm up and set up some notation, we recall what it means for a $J$-holomorphic half cylinders to be asymptotically cylindrical to a Reeb orbit $\gamma$.  Denote the positive and negative half-cylinders by
\[
Z_+:= [0,\infty) \times S^1, \ \ \ Z_-:=(-\infty,0] \times S^1.
\]
A $J$-holomorphic half-cylinder $u: Z_\pm \to \R \times M$ is said to be (positively / negatively) \emph{asymptotic} to a nondegenerate $T$-periodic Reeb orbit $\gamma$ if, after a positive reparametrization near infinity,
\begin{equation}\label{halfas}
u(s,t) = \exp_{(Ts,\gamma(t))}\eta(s,t) \ \ \ \mbox{ for $|s|$ large},
\end{equation}
where the exponential map is assumed to be translation invariant and $\eta(s,t)$ is a vector field along the trivial cylinder\footnote{The trivial cylinder is the $J$-holomorphic ``orbit cylinder" $\R \times \gamma$ where $J$ is $\R$-invariant, which is expressed as $u: \R \times S^1 \to \R \times Y: (s,t) \mapsto (Ts, \gamma(Tt))$, where $T$ is the period of the orbit $\gamma$.} with $\eta(s,\cdot) \to 0$ in $C^\infty(S^1)$ as $s \to \pm \infty$.  Thus as $|s|$ approaches infinity, $u(s,t)$ becomes $C^\infty$ close to the trivial cylinder, which is an immersion with normal bundle equivalent to $\gamma^*\xi.$

After a further reparametrization of $Z_\pm$, we can arrange for \eqref{halfas} to hold for a unique section $\eta_u(s,t) \in \xi_{\gamma(t)}$; the section $\eta_u(s,t)$ is called the \emph{asymptotic representative} of $u$.  The uniqueness of $\eta$ depends on the choice of parametrization of the Reeb orbit $\gamma: S^1 \to Y$; different choices change $\eta_u$ by a shift in the $t$-coordinate.

\begin{proposition}{\em \cite{hwz1}, \cite[Prop.~2.4]{ht2}}\label{prop:orbitas}
Suppose $u: Z_\pm \to \R \times Y$ is a $J$-holomorphic half cylinder which is positively/negatively  asymptotic to the nondegenerate Reeb orbit $\gamma : \R/\Z \to M$ and let $\eta(s,t) \in \xi_{\gamma(t)}$ denote its asymptotic representative.  Then if $\eta$ is not identically zero, there exists a unique nontrivial eigenfunction $\varphi$ of $L_\gamma$ with 
\[
L_\gamma \varphi = \lambda \varphi, \ \ \  \pm \lambda <0 
\]
and a section $r(s,t) \in \xi_{\gamma(t)}$ satisfying $r(s, \cdot) \to 0$ uniformly as $s \to \infty$ such that for sufficiently large $|s|$,
\begin{equation}\label{eq:usefulin4}
\eta(s,t) = e^{\lambda s}\left(\varphi + r(s,t) \right). 
\end{equation}

\end{proposition}

In the context of Proposition \ref{prop:orbitas}, we say that $u$ approaches the Reeb orbit $\gamma$ along the asymptotic eigenfunction $\varphi_\lambda$ with decay rate $|\lambda|$.  This proposition establishes the asymptotic approach between the half cylinder given by the end of $u$ and the trivial cylinder $\R \times \gamma$.  A similar result holds for any two curves approaching the same orbit, and from this one can establish a lower bound on the resulting ``relative" decay rate and a generalization of this asymptotic formula  to ``higher orders" was established in \cite[\S 2]{s1}.

\begin{remark}\label{rem:finiteint}
An important consequence of Proposition \ref{prop:orbitas}, and its relative decay rate improvement for any two curves asymptotic to the same Reeb orbit, is that any two asymptotically cylindrical $J$-holomorphic having nonidentical images must have at most finitely many intersections, cf. \cite[Cor.~3.13]{wint}
  \end{remark}

In particular, the asymptotic representative encodes the isotopy class of the braid $\zeta$, which is given in terms of the intersection of the end of the $J$-holomorphic curve asymptotic to $\gamma^d$ and a tubular neighborhood $N$ of $\gamma$ for $|s|\gg0$.  Let $N \simeq (\R/\Z) \times D^2 $ be a tubular neighborhood of $\gamma$ which has been identified with a disk bundle in the normal bundle to $\gamma$ as well as in $\gamma^*\xi$ so that $\gamma$ corresponds to $(\R/\Z) \times \{ 0\}$ and the derivative of the identification along $\gamma$ sends $\gamma^*\xi$ to $\{ 0\} \oplus \C$ in agreement with the unitary trivialization $\tau$.  Let  $u$ be an asymptotically cylindrical $J$-holomorphic curve with a positive (or negative) end at $\gamma^d$, which is not part of a multiply covered component.  Then results of Siefring \cite[Cor.~2.5, 2.6]{s1} allow us to conclude that if $|s|$ is sufficiently large then the intersection of this end of $u$ with $\{ s\} \times N \subset \{ s \} \times Y$ is a braid $\zeta$, whose isotopy class is independent of $s$.

We conclude by providing the following bounds on the winding number in terms of the Conley-Zehnder index.  We deduce that the ends of the $J$-holomorphic curves in $\M_0^J(\gamma_+,\gamma_-)$ meet the extremal bounds, by way of a mild generalization of arguments previously appearing in \cite[\S 3]{hwz2}, \cite[Lem. 3.2]{HN}, \cite[\S 3]{ht2}.


\begin{lemma}{\em \cite[\S 3]{hwz2}, \cite[Lem. 3.2]{HN}}\label{lem:dc-wind}
Let $(X,\lambda, J)$ be an exact completed symplectic cobordism from $(S^3/G,\lambda_N,J_N)$ to $(S^3/G,\lambda_M,J_M)$ as in Definition \ref{def:Jnm}, for $N\leq M$.  Let $\gamma$ be an embedded Reeb orbit, let $u$ be a $J$-holomorphic curve in $X$ with a positive end at $\gamma^d$ and a single negative end.  Let $\zeta$ denote the intersection of an end with $\{ s \} \times Y$.  If $s$ is sufficiently large, then 
\begin{enumerate}[\em (a)]
\item $\zeta$ is the graph in $N$ of a nonvanishing section of $\xi|_{\gamma^d}$ and thus has a well-defined winding number $\wind_\tau(\zeta)$.
\item $\wind_\tau(\zeta) \leq \lfloor \mu_{\CZ}^\tau(\gamma^d)/2 \rfloor. $ 
\item If $\mu_{CZ}^\tau(\gamma^d)$ is odd and $\ind(u) =0 $, then equality holds in {(b)}.
\end{enumerate}
\end{lemma}

\begin{proof}
Choose an identification $N\simeq (\R/\Z)\times D^2$ compatible with the trivialization $\tau$. The asymptotic behavior of  pseudoholomorphic curves as described in conjunction with Lemma \ref{lem:eigenfunctions} above implies (a).

Hofer, Wysocki, and Zehnder demonstrated in \cite[\S3]{hwz2} that for each integer $n$, there are exactly two eigenvalues of $L_\gamma$ whose eigenfunctions have winding number $n$. (Here and below we count eigenvalues with multiplicity.) Moreover, larger winding numbers correspond to larger eigenvalues, and the largest possible winding number for a negative eigenvalue is $\floor{\mu_{\CZ}^\tau(\gamma^d)/2}$, thus (b) follows.

To establish (c), note that the same argument in \cite[\S3]{hwz2} also shows that the smallest possible winding number of an eigenfunction of $L_\gamma$ with positive  eigenvalue is $\ceil{\mu_{\CZ}^\tau(\gamma^d)/2}$. Since $\mu_{\CZ}^\tau(\gamma^d)$ is assumed odd, we have a strict inequality $\floor{\mu_{\CZ}^\tau(\gamma^d)/2} < \ceil{\mu_{\CZ}^\tau(\gamma^d)/2}$. Consequently, the two (possibly equal) eigenvalues of $L_\gamma$ whose eigenfunctions have winding number $\floor{\mu_{\CZ}^\tau(\gamma^d)/2}$ are both negative. Thus, if equality does not hold in (b), then the associated eigenvalue $\lambda$  is not one of the two largest negative eigenvalues of $L_\gamma$ (counted with multiplicity as usual). 

Wendl observed that one can use exponentially weighted Sobolev spaces to set up the moduli space of irreducible holomorphic curves in which the eigenvalue $\lambda$ is not one of the two largest negative eigenvalues, see  \cite[Rmk.\ 3.3]{ht2}, but beware of their sign convention for the asymptotic operator.

Automatic transversality holds under the assumptions in (c), e.g. \cite[Thm.~1]{W}, because 
\[
0 = \ind(u) > -2 +h_+(u),
\]
since $h_+(u)=0$, the number of ends of $u$ asymptotic to positive hyperbolic Reeb orbits.
Thus any curves in this moduli space are cut out transversely, but the dimension of the moduli space is $2$ less than usual. Consequently there are no $J$-holomorphic curves $u$ in this moduli space with $\text{ind}(u)= 0$.

\end{proof}

Symmetrically to Lemma~\ref{lem:dc-wind}, we also have the following:

\begin{lemma}
\label{lem:neg-dc}
Let $\gamma$ be an embedded Reeb orbit, let $u$ be a $J$-holomorphic curve in $X$ with a negative end at $\gamma^d$ which is not part of a trivial cylinder in $\R \times Y$ with $J$ chosen to be $\R$-invariant, and a single  positive end let $\zeta$ denote the intersection of this end with $\{s\}\times Y$. If $s<<0$, then the following hold:
\begin{enumerate}[\em (a)]
\item $\zeta$ is the graph of a nonvanishing section of $\xi|_{\gamma^d}$, and thus has a well-defined winding number $\wind_\tau(\zeta)$.
\item $\wind_\tau(\zeta) \ge \ceil{\mu_{CZ}^\tau(\gamma^d)/2}$.
\item If $\mu_{CZ}^\tau(\gamma^d)$ is odd and $\ind(u)= 0$, then equality holds in (b).
\end{enumerate}
\end{lemma}

\subsubsection{Recap of intersection theory}\label{sss:int}

We now briefly review some intersection theory due to Hutchings \cite{index,revisit} and Siefring \cite{s1,s2}, which has been nicely summarized by Wendl \cite{wint}. The Siefring intersection pairing gives a relation between homotopy invariant quantities that have geometric meanings independent of any choice of trivializations and surface representatives of the holomorphic curves.  This improves upon Hutchings' work, which defined a relative intersection pairing and provided a relative adjunction formula for somewhere injective asymptotically cylindrical curves.

\begin{theorem}[{\cite{s2}, \cite[Thm. 4.1]{wint}}]\label{thm:*}
For any two asymptotically cylindrical maps in a 4-dimensional completed symplectic cobordism with $u$ and $u'$ with nondegenerate asymptotic Reeb orbits, there exists the so-called Siefring intersection pairing 
\[
u*v \in \mathbb{Z}
\]
which satisfies the following properties.
\begin{enumerate}[\em(a)]
\item The pairing $u*v$ depends only on the homotopy classes of $u$ and $v$ as asymptotically cylindrical  maps.
\item If $u$ and $v$ are asymptotically cylindrical $J$-holomorphic curves that have nonidentical images then
\begin{equation}
u * v := u \cdot v + \iota_\infty(u,v),
\end{equation}
where $u \cdot v$ is the usual algebraic count of intersections which is nonnegative by intersection positivity and $ \iota_\infty(u,v)$ is a nonnegative integer realizing ``hidden intersections at infinity."
\end{enumerate}
\end{theorem}

\begin{remark}
There is the following \emph{Folk Theorem}, which we do not make use of, but helps elucidate the Siefring intersection pairing, whose statement is as follows. In Theorem \ref{thm:*}(b), there exists a perturbation $J_\epsilon$ which is $C^\infty$ close to $J$ and a pair of $J_\epsilon$-holomorphic asymptotically cylindrical curves $u_\epsilon$ and $v_\epsilon$ having the same domains as $u$ and $v$ and close to $u$ and $v$ in their respective modui spaces, such that
\[
u_\epsilon \cdot v_\epsilon = u*v.
\]
Thus we should interpret $u*v$ as the count of intersections between generic curves homotopic to $u$ and $v$.  
\end{remark}

An important corollary of Theorem \ref{thm:*} that we will make use of is as follows.

\begin{corollary}\label{cor:0disjoint}
If $u$ and $v$ are $J$-holomorphic curves satisfying $u* v = 0$ then any two $J$-holomorphic curves that have nonidentical images and are homotopic to $u$ and $v$ respectively are disjoint.
\end{corollary}

The final foundational result that we need concerns nonnegativity of asymptotic intersections.

\begin{theorem}[{\cite[Thm. 4.13]{wint}}]\label{thm:iota0}
If $u$ and $v$ are asymptotically cylindrical $J$-holomorphic maps with nonidentical images, then $\iota_\infty(u,v) \geq 0$ with equality if and only if for all pairs of ends of $u$ and $v$ respectively asymptotic to covers of the same Reeb orbit, all of the resulting relative asymptotic eigenfunctions have extremal winding.
\end{theorem}

To actually compute $u*v$, we need a few more notions, including the relative intersection number, which is due to Hutchings \cite{index, revisit}.   The \emph{relative intersection number} is denoted in the work of Siefring and Wendl by $ u \bullet_\tau v$.   The relative intersection pairing previously appeared in Hutchings' work as one of the a key ingredients to the definition of embedded contact homology, where it is denoted by $Q_\tau( [u],[v])$ and was established to depend only on the homotopy class of the trivialization $\tau$ and relative homology class of the curves.  These intersection numbers coincide:
\[
 u \bullet_\tau v = Q_\tau( [u],[v]).
 \]
    (We go into further details in regards to how to more precisely define and compute the relative intersection number for $J$-holomorphic cylinders \`a la Hutchings in the proof of Lemma \ref{lem:negint}.)

In the interim, a colloquial description is that the relative intersection number is an algebraic count of intersections between $u$ and a generic perturbation of $v$ so that they have at most finitely many intersections and the perturbation of $v$ is shifted by an arbitrarily small positive distance in directions determined by the chosen trivialization $\tau$ at infinity.  (One could symmetrically perturb $u$ and leave $v$ alone.)  

If $u$ and $v$ have finitely many intersections then the algebraic intersection number $u \cdot v$ is well-defined.  In this situation, 
\begin{equation}\label{e:rel-int}
 u \bullet_\tau v = u \cdot v + \iota_\infty^\tau(u,v),
\end{equation}
where $\iota_\infty^\tau(u,v)$ is a count of additional intersections near infinity that appear when $v$ is perturbed, because the perturbation of $v$ can be assumed to be nontrivial only in some neighborhood of infinity where $u$ and $v$ are disjoint.  (Again, the content of \eqref{e:rel-int} previously appeared in the work of Hutchings with different notation, which we elucidate in the proof of Lemma \ref{lem:negint}, including the definition of $\iota_\infty^\tau(u,v)$.)

Modulo having deferred more precise definitions of  $u \bullet_\tau v$ and $\iota_\infty^\tau(u,v)$ to later, we are nearly ready to define the trivialization independent asymptotic intersection count, $\iota_\infty(u,v).$  The last ingredient we need are the terms $\Omega_\pm^\tau(\gamma^k,\gamma^m)$, which are the theoretical bounds on the possible asymptotic winding of ends of $u$ around the ends of $v$, as discussed in Section \ref{sss:wind}, and defined below. 

\begin{definition} 
Assume $u$ and $v$ are asymptotically cylindrical $J$-holomorphic curves, so that $u(s,t)$ and $v(s,t)$ approach their respective covers of $\gamma$ (both at either a negative end or both at a positive end) along asymptotic eigenfunctions $\varphi_u$ and $\varphi_v$ with decay rates $|\lambda_u|$ and $|\lambda_v|$, respectively.  We define the theoretical extremal bounds at a positive / negative puncture by
\[
\Omega_\pm^\tau(\gamma^k,\gamma^m) := \operatorname{min} \left\{ \mp k \alpha_\mp^\tau(\gamma^m),\mp m\alpha_\mp^\tau(\gamma^k) \right\}
\] 
We extend the definition of $\Omega_\pm^\tau$ by setting
\[
\Omega_\pm^\tau(\gamma^k_1,\gamma^m_2):=0 \quad \mbox{whenever } \gamma_1 \neq \gamma_2.
\]
\end{definition}
As explained in \cite[\S 4.2]{wint}, these theoretical extremal bounds provides a universal lower bound on $\iota_\infty^\tau(u,v)$, namely
\[
\iota_\infty^\tau(u,v) \geq \sum_{(z,z')\in\Gamma_u^\pm\times\Gamma_v^\pm}\Omega_\pm^\tau(\gamma^{k_z}_z,\gamma^{k_{z'}}_{z'}),
\]
where $\Gamma_u^\pm$ denote the positive \ negative punctures of $u$ and $\gamma^{k_z}_z$ denotes the associated Reeb orbit to which the puncture $z$ asymptotes, and likewise for $v$.  

We can now define the Siefring intersection pairing in terms of these more concrete and computable terms as follows.

\begin{definition}\label{def:intstuff}
For asymptotically cylindrical maps $u$ and $v$ with finitely many intersections, we define
\[
\iota_\infty(u,v) := \iota_\infty^\tau(u,v) - \sum_{(z,z')\in\Gamma_u^\pm\times\Gamma_v^\pm}\Omega_\pm^\tau(\gamma^{k_z}_z,\gamma^{k_{z'}}_{z'}).
\]
and similarly for any  asymptotically cylindrical maps $u$ and $v$ (with)not necessarily with finitely many intersections), we can define
\[
u*v :=  u \bullet_\tau v  - \sum_{(z,z')\in\Gamma_u^\pm\times\Gamma_v^\pm}\Omega_\pm^\tau(\gamma^{k_z}_z,\gamma^{k_{z'}}_{z'})
\]
\end{definition}
Crucially, the results in Section \ref{sss:wind} will allow us to conclude that in the case of interest, the winding numbers are extremal in the sense that they achieve the corresponding theoretical bound.

We have one last computational ingredient that we will rely on, cf. \cite[Ex. 4.19(b)]{wint}, detailing some of the aforementioned notions along the way as we proceed with the proof.
\begin{lemma}\label{lem:negint}
Let $(X,\lambda, J)$ be an exact completed symplectic cobordism from $(S^3/G,\lambda_N,J_N)$ to $(S^3/G,\lambda_M,J_M)$ as in Definition \ref{def:Jnm}, for $N\leq M$. Let $u_{[\gamma]}$ be the cobordism trivial cylinder between orbits in the same equivalence class, which are further assumed to be either both elliptic or both odd iterates of a negative hyperbolic Reeb orbit.  Then the Siefring self-intersection pairing is given by the multiplicity of the Reeb orbit equivalence class:
\begin{equation}
u_{[\gamma]}*u_{[\gamma]} = -m([\gamma]).
\end{equation}
\end{lemma}

\begin{proof}
By Definition \ref{def:intstuff}
\begin{equation}\label{e:*triv}
u_{[\gamma]}*u_{[\gamma]} = u_{[\gamma]}\bullet_\tau u_{[\gamma]} +  { \Omega_+^\tau - \Omega_-^\tau}.
\end{equation}
Here  $ u_{[\gamma]}\bullet_\tau u_{[\gamma]}$ is the relative intersection number, which appears in the ECH literature as $Q_\tau( u_{[\gamma]}, u_{[\gamma]}),$ which we detail further and compute to be 0.
We compute $\Omega_\pm^\tau$ momentarily in \eqref{e:omega}.

Using the definition of the relative intersection pairing by Hutchings \cite[\S 2.7]{revisit}, it follows that the relative intersection number of any cobordism trivial cylinder is 0, e.g. that
\[
{Q_\tau(u_{[\gamma]}) = u_{[\gamma]}\bullet_\tau u_{[\gamma]}=0}
\]
  To elucidate this point, recall that the \emph{relative intersection number} $Q_\tau(u)$  of any $J$-holomorphic curve $u$ is a well-defined algebraic count of interior intersections of two compact oriented surfaces whose interiors are transverse and do not intersect at the boundary $S, S' \in [-1,1] \times Y$ representing $[u] \in H_2(Y,\beta_+,\beta_-)$, where $[\beta_+]=[\beta_-] \in H_1(Y)$, whose definition in the case when $u$ is a cylinder we more precisely explain as follows.  
  
  If $S$ is one of these so called admissible representatives of a $J$-holomorphic cylinder $u$ (with one positive end at $\beta_+^{m_+}$ and one negative end at $\beta_-^{n_-}$), then for $\epsilon$ sufficiently small, $S \cap (\{1-\epsilon\} \times Y)$ consists of a braid $\zeta_+$ with $m_+$ strands in a tubular neighborhood of $\beta_+$, which is well-defined up to isotopy.  Likewise $S \cap (\{-1+\epsilon\} \times Y)$ consists of a braid $\zeta_-$ with $m_-$ strands in a tubular neighborhood of $\beta_-$, which is well-defined up to isotopy. (Here $\beta_\pm$ are embedded Reeb orbits.)

The linking number  of two disjoint braids $\zeta_1$ and $\zeta_2$ around an embedded Reeb orbit $\beta$ is denoted by $\ell_\tau(\zeta_1,\zeta_2) \in \mathbb{Z}$ and is defined to be the linking number of the oriented links $\phi_\tau(\zeta_1)$ and  $\phi_\tau(\zeta_2)$ in $\R^3$, where $\phi_\tau$ is an embedding of the tubular neighborhood $N$ of $\beta$ which has been identified via the trivialization $\tau$ with $S^1\times D^2$, so that the projection of $\zeta_i$ to to $S^1$ is a submersion, which has further been identified with a solid torus in $\R^3$, cf. \cite[\S 2.6]{revisit} for more details.  The linking number is by definition, one half of the signed count of crossings of a strand of $\phi_\tau(\zeta_1)$ with a strand of $\phi_\tau(\zeta_2)$ in the projection to $\R^2\times\{0\}$.  The sign convention is that counterclockwise twists count positively, which differs from knot theory literature \cite[\S 3.1]{index}.

In the case of the aforementioned cylinder $u$, we define the linking number
\[
\ell_\tau(S,S'):=\ell_\tau(\zeta_+,\zeta_+') - \ell_\tau(\zeta_-,\zeta_-');
\]
for more general $J$-holomorphic curves, one obtains a collection of disjoint braids, whose linking numbers one sums over.  The linking number $\ell_\tau$ as defined above is Siefring's relative asymptotic intersection number $\iota_\infty^\tau$ with an opposite sign convention for the two punctures.

The relative intersection pairing is defined by
 \[
Q_\tau([u],[u]):=\#(\dot{S} \cap \dot{S}') - \ell_\tau(S,S'),
 \]
and one further denotes $Q_\tau([u]):=Q_\tau([u],[u]).$ With the proceeding understood, we can now see by the definition detailed above that
\[
{Q_\tau(u_{[\gamma]}) = u_{[\gamma]}\bullet_\tau u_{[\gamma]}=0}
\]
as claimed.

The remaining terms in \eqref{e:*triv}, $\Omega_\pm$ are the theoretical bounds on the possible relative asymptotic winding of ends of $u$ around itself at the appropriate puncture.  By definition, for $u_{[\gamma]}$ we have that
\begin{equation}\label{e:omega}
\Omega_\pm^\tau(\gamma_{z_\pm}^m,\gamma_{z_\pm}^m):=\mp m \alpha_\mp^\tau(\gamma^m).
\end{equation}
Here $\Omega_+^\tau$ corresponds to the positive asymptotic orbit of $u_{[\gamma]}$ at the puncture $z_+$ and $\Omega_-^\tau$ corresponds to the negative asymptotic orbit of $u_{[\gamma]}$ at the puncture $z_-$.

In the below, let 
\[ 
m = m([\gamma]).
\]
Thus \eqref{e:*triv} can be computed to be
\[
\begin{split}
u_{[\gamma]}*u_{[\gamma]} & =  u_{[\gamma]}\bullet_\tau u_{[\gamma]} -  { \Omega_+^\tau + \Omega_-^\tau}\\
& = 0 + m \ \alpha_-^\tau([\gamma]) - m \ \alpha_+^\tau([\gamma]) \\
& = m.
\end{split}
\]
Here the last line follows from a basic manipulation of the Conley-Zehnder index formula \eqref{e:windcz} in terms of the positive and negative extremal winding numbers and parity of the orbit, yielding
\[
0 = 2\alpha_-(\gamma) - 2\alpha_+(\gamma) + 2p(\gamma).
\]
Here $p(\gamma)$ is the parity of the Reeb orbit $\gamma$, which is by definition is 1 whenever $\gamma$ is elliptic or an odd multiple of a negative hyperbolic orbit.
\end{proof}

\subsubsection{Proof of Proposition \ref{prop:unique-triv}}

We now combine Sections \ref{sss:wind} and \ref{sss:int} to establish the desired uniqueness of cobordism trivial cylinders as follows. 


\begin{proof}[Proof of Proposition \ref{prop:unique-triv}]
  In the below, denote $T$ to be the cobordism trivial cylinder between $\gamma_+$ and $\gamma_-$, where $\gamma_+ \sim \gamma_-$, $\gamma_+\in\mathcal{P}^{L_N}_{\text{good}}(\lambda_N)$, and $\gamma_-\in\mathcal{P}^{L_M}_{\text{good}}(\lambda_M)$.  Let $v$ be a curve with the same asymptotics, which is not a cobordism trivial cylinder.  In particular, $T$ and $v$ have nonidentical images.

Recall that by Theorem \ref{thm:iota0} {the count of ``hidden intersections at infinity"} satisfies
\[
\iota_\infty(T,v) \geq 0
\]
with equality if and only if for all pairs of ends of $T$ and $v$ respectively asymptotic to covers of the same Reeb orbit, all of the resulting relative asymptotic eigenfunctions have extremal winding.  We established that the resulting relative asymptotic eigenfunctions have extremal winding in \S \ref{sss:wind}, cf. Lemmas  \ref{lem:dc-wind} and \ref{lem:neg-dc}.

Thus the Siefring intersection `*' number coincides with the algebraic intersection `$\cdot$' number:
\[
T*v = T \cdot v,
\]
which must be non-negative by the positivity of intersections (cf. \cite[App. B]{wint}), with equality if and only if $T$ and $v$ are disjoint by Corollary \ref{cor:0disjoint}.  However, the Siefring intersection number $T*v$ depends only on the homotopy classes of $T$ and $v$ as asymptotically cylindrical maps by Theorem \ref{thm:*}(a). We computed that all asymptotically cylindrical maps with the same asymptotics and relative homology class have negative Siefring intersection number in Lemma \ref{lem:negint}, a contradiction.  Thus $T$ and $v$ must have identical images, hence $\#\mathcal{M}^J_0 (\gamma_+ , \gamma_- ) = 1$, as desired.
\end{proof}

\subsection{Homotopy classes of Reeb orbits and proof of Proposition \ref{proposition: CZ and action}}\label{subsection: free homotopy classes represented by reeb orbits}

Proposition \ref{proposition: CZ and action} was key in arguing  compactness of $\mathcal{M}_0^J(\gamma_+,\gamma_-)$. To prove Proposition \ref{proposition: CZ and action}, we will make use of a bijection $[S^1,S^3/G]\cong\text{Conj}(G)$ to identify the free  homotopy classes represented by orbits in $S^3/G$ in terms of $G$. A loop in $S^3/G$ is a map $\gamma:[0,T]\to S^3/G$, satisfying  $\gamma(0)=\gamma(T)$. Selecting a lift $\widetilde{\gamma}:[0,T]\to S^3$ of $\gamma$ to $S^3$ determines a unique $g\in G$, for which $g\cdot\widetilde{\gamma}(0)=\widetilde{\gamma}(T)$. The map $[\gamma]\in[S^1,S^3/G]\mapsto[g]\in\text{Conj}(G)$ is well defined,  bijective, and respects iterations; that is, if $[\gamma]\cong[g]$, then  $[\gamma^m]\cong[g^m]$ for $m\in\N$.

\subsubsection{Cyclic subgroups}
We may assume that $G=\langle g\rangle\cong\Z_n$, where $g$ is the diagonal matrix $g=\text{Diag}(\epsilon, \overline{\epsilon})$, and $\epsilon:=\text{exp}{(2\pi i/n)}\in\C$. We have  $\text{Conj}(G)=\{[g^m]:0\leq m<n\}$, each class is a singleton because $G$ is abelian. We select explicit lifts $\widetilde{\gamma}_{\mathfrak{s}}$ and $\widetilde{\gamma}_{\mathfrak{n}}$ of $\gamma_{\mathfrak{s}}$ and $\gamma_{\mathfrak{n}}$ to $S^3$ given by:
\[\widetilde{\gamma}_{\mathfrak{n}}(t)=(e^{it},0),\,\,\, \text{and}\,\,\,\widetilde{\gamma}_{\mathfrak{s}}(t)=(0,e^{it}),\,\, \text{for}\,\, t\in[0,2\pi/n].\]
Because $g\cdot\widetilde{\gamma}_{\mathfrak{n}}(0)=\widetilde{\gamma}_{\mathfrak{n}}(2\pi/n)$,  $[\gamma_{\mathfrak{n}}]\cong[g]$,  and thus $[\gamma_{\mathfrak{n}}^m]\cong[g^m]$ for $m\in\N$. Similarly, because $g^{n-1}\cdot\widetilde{\gamma}_{\mathfrak{s}}(0)=\widetilde{\gamma}_{\mathfrak{s}}(2\pi/n)$, $[\gamma_{\mathfrak{s}}]\cong[g^{n-1}]=[g^{-1}]$, and thus $[\gamma_{\mathfrak{s}}^m]\cong[g^{-m}]$ for  $m\in\N$.

\begin{center}
    \begin{tabular}{||c|c||}
    \hline
    Class & Represented orbits\\
    \hline\hline
        $[g^m]$, for $0\leq m<n$ & $\gamma_{\mathfrak{n}}^{m+kn},\,\,\gamma_{\mathfrak{s}}^{n-m+kn}$ \\
        \hline
    \end{tabular}
\end{center}

\begin{lemma}\label{lemma: cyclic cobordisms}
Suppose $[\gamma_+]=[\gamma_-]\in[S^1,S^3/\Z_n]$ for $\gamma_+\in\mathcal{P}^{L_N}(\lambda_N)$ and $\gamma_-\in\mathcal{P}^{L_M}(\lambda_M)$.
\begin{enumerate}[\em (a)]
\itemsep-.35em
    \item If $\mu_{\CZ}(\gamma_+)=\mu_{\CZ}(\gamma_-)$ then $\gamma_+\sim\gamma_-$.
    \item If $\mu_{\CZ}(\gamma_+)<\mu_{\CZ}(\gamma_-)$ then $\mathcal{A}(\gamma_+)<\mathcal{A}(\gamma_-)$.
\end{enumerate}
\end{lemma}

\begin{proof}
Write $[\gamma_{\pm}]\cong[g^m]$, for some $0\leq m<n$. To prove (a), assume $\mu_{\CZ}(\gamma_+)=\mu_{\CZ}(\gamma_-)$. Recall the Conley Zehnder index formulas \eqref{equation: CZ indices cyclic} from Section \ref{subsection: cyclic}: \begin{equation}\label{equation: CZ indices cyclic repeated}
    \mu_{\CZ}(\gamma_{\mathfrak{s}}^k)=2\Bigl\lceil \frac{2k}{n} \Bigr\rceil-1,\,\,\,\,\,\mu_{\CZ}(\gamma_{\mathfrak{n}}^k)=2\Bigl\lfloor \frac{2k}{n} \Bigr\rfloor+1.
\end{equation} We first argue that $\gamma_{\pm}$ both project to the same orbifold point. To see why,  note that an iterate of $\gamma_{\mathfrak{n}}$ representing the same homotopy class as an iterate of  $\gamma_{\mathfrak{s}}$ cannot share the same Conley Zehnder indices, as  the equality $\mu_{\CZ}(\gamma_{\mathfrak{n}}^{m+k_1n})=\mu_{\CZ}(\gamma_{\mathfrak{s}}^{n-m+k_2n})$ implies, by \eqref{equation: CZ indices cyclic repeated}, that $2 \lfloor \frac{2m}{n}\rfloor+1=2+2k_2-2k_1$. This equality cannot hold because the left hand side is odd while the right hand side is even. So, without loss of  generality, suppose both $\gamma_{\pm}$ are iterates of $\gamma_{\mathfrak{n}}$. Then we must have that $\gamma_{\pm}=\gamma_{\mathfrak{n}}^{m+k_{\pm}n}$. Again by \eqref{equation: CZ indices cyclic repeated}, the equality $\mu_{\CZ}(\gamma_+)=\mu_{\CZ}(\gamma_-)$ implies $k_+=k_-$ and thus $\gamma_+\sim\gamma_-$.

To prove (b), we suppose $\mu_{\CZ}(\gamma_+)<\mu_{\CZ}(\gamma_-)$. Recall the action formulas \eqref{equation: action cyclic} from Section \ref{subsection: cyclic}: \begin{equation}\label{equation: action cyclic repeat}
    \mathcal{A}(\gamma_{\mathfrak{s}}^k)=\frac{2\pi k(1-\varepsilon)}{n},\,\,\,\,\,\mathcal{A}(\gamma_{\mathfrak{n}}^k)=\frac{2\pi k(1+\varepsilon)}{n}.
\end{equation} If both $\gamma_{\pm}$ project to the same point, then  $m(\gamma_+)<m(\gamma_-)$, because the Conley Zehnder index is a non-decreasing function of the multiplicity, and thus, $\mathcal{A}(\gamma_+)<\mathcal{A}(\gamma_-)$. In the case that $\gamma_{\pm}$ project to different orbifold points there are two possibilities for the pair $(\gamma_+,\gamma_-)$:

\vspace{.25cm}

\emph{Case 1}:  $(\gamma_+,\gamma_-)=(\gamma_{\mathfrak{n}}^{m+nk_+},\gamma_{\mathfrak{s}}^{n-m+nk_-})$. By \eqref{equation: action cyclic repeat}, \[\mathcal{A}(\gamma_-)-\mathcal{A}(\gamma_+)=x+\delta,\,\,\,\, \text{where}\,\,\,\, x=2\pi\bigg(1+k_--k_+-\frac{2m}{n}\bigg)\] and $\delta$ can be made arbitrarily small, independent of $m$, $n$, and $k_{\pm}$. Thus, $0<x$ would imply $\mathcal{A}(\gamma_+)<\mathcal{A}(\gamma_-)$, after reducing $\varepsilon_N$ and $\varepsilon_M$ if necessary. By  \eqref{equation: CZ indices cyclic repeated}, the inequality $\mu_{\CZ}(\gamma_+)<\mu_{\CZ}(\gamma_-)$ yields \begin{equation}\label{equation: case 1 cyclic inequality}
    k_++\Bigl\lfloor\frac{2m}{n}\Bigr\rfloor\leq k_-.
\end{equation} 
If $2m<n$, then $\lfloor\frac{2m}{n}\rfloor=0$ and \eqref{equation: case 1 cyclic inequality} implies $k_+\leq k_-$. Now we see that \[x=2\pi\bigg(1+k_--k_+-\frac{2m}{n}\bigg)\geq2\pi\bigg(1-\frac{2m}{n}\bigg)>0,\] thus $\mathcal{A}(\gamma_+)<\mathcal{A}(\gamma_-)$. If $2m\geq n$, then $\lfloor\frac{2m}{n}\rfloor=1$ and \eqref{equation: case 1 cyclic inequality} implies $k_++1\leq k_-$. Now we see that \[x=2\pi\bigg(1+k_--k_+-\frac{2m}{n}\bigg)\geq2\pi\bigg(2-\frac{2m}{n}\bigg)>0,\] hence $\mathcal{A}(\gamma_+)<\mathcal{A}(\gamma_-)$.

\vspace{.25cm}

\emph{Case 2}: $(\gamma_+,\gamma_-)=(\gamma_{\mathfrak{s}}^{n-m+nk_+},\gamma_{\mathfrak{n}}^{m+nk_-})$. By  \eqref{equation: action cyclic repeat},  \[\mathcal{A}(\gamma_-)-\mathcal{A}(\gamma_+)=x+\delta,\,\,\,\, \text{where}\,\,\,\, x=2\pi\bigg(\frac{2m}{n}+k_--k_+-1\bigg)\] and $\delta$ is a small positive number. Thus, $0\leq x$ would imply $\mathcal{A}(\gamma_+)<\mathcal{A}(\gamma_-)$. Applying \eqref{equation: CZ indices cyclic repeated}, $\mu_{\CZ}(\gamma_+)<\mu_{\CZ}(\gamma_-)$ yields \begin{equation}\label{equation: case 2 cyclic inequality}
    k_+-\Bigl\lfloor\frac{2m}{n}\Bigr\rfloor<k_-.
\end{equation} 
If $2m<n$, then $\lfloor\frac{2m}{n}\rfloor=0$ and \eqref{equation: case 2 cyclic inequality} implies $k_++1\leq k_-$. Now we see that \[x=2\pi\bigg(\frac{2m}{n}+k_--k_+-1\bigg)\geq2\pi\bigg(\frac{2m}{n}\bigg)\geq0,\] thus $\mathcal{A}(\gamma_+)<\mathcal{A}(\gamma_-)$. If $2m\geq n$, then $\lfloor\frac{2m}{n}\rfloor=1$ and \eqref{equation: case 2 cyclic inequality} implies $k_+\leq k_-$. Now we see that \[x=2\pi\bigg(\frac{2m}{n}+k_--k_+-1\bigg)\geq2\pi\bigg(\frac{2m}{n}-1\bigg)\geq0,\] thus $\mathcal{A}(\gamma_+)<\mathcal{A}(\gamma_-)$.
\end{proof}

\subsubsection{Binary dihedral groups $\D_{2n}^*$}

The nonabelian group $\D_{2n}^*$ has $4n$ elements and has $n+3$  conjugacy classes. For  $0<m<n$,  the conjugacy class of $A^m$ is $[A^m]=\{A^m, A^{2n-m}\}$ (see Section \ref{subsection: dihedral} for notation). Because $-\text{Id}$ generates the center, we have $[-\text{Id}]=\{-\text{Id}\}$. We also have the conjugacy class $[\text{Id}]=\{\text{Id}\}$. The final two conjugacy classes are \[[B]=\{B, A^2B, \dots, A^{2n-2}B\}, \,\,\,\text{and}\,\,\,[AB]=\{AB, A^3B, \dots, A^{2n-1}B\}.\]
Note that $B^{-1}=B^3=-B=A^nB$ is conjugate to $B$ if and only if $n$ is even. Table \ref{table: lifts of dihedral reeb orbits} records lifts of our three embedded Reeb orbits, $e_-, h,$ and $e_+$, to paths $\widetilde{\gamma}$ in $S^3$, along with the group element $g\in\D^*_{2n}$ satisfying $g\cdot\widetilde{\gamma}(0)=\widetilde{\gamma}(T)$.

\begin{table}[h!]
\centering
\begingroup
\setlength{\tabcolsep}{10pt}
\renewcommand{\arraystretch}{1.4}
 \begin{tabular}{||c | c | c | c ||} 
 \hline
$\widetilde{\gamma}$ & $S^3$ expression & interval & $g\in\D^*_{2n}$ \\ [0.5ex] 
 \hline\hline
 $\widetilde{e_-}(t)$ & $t\mapsto \frac{e^{it}}{\sqrt{2}}\cdot (1,-i\epsilon)$ & $t\in[0,\frac{\pi}{2}]$ & $AB$ \\ 
 \hline
 $\widetilde{h}(t)$ & $t\mapsto \frac{e^{it}}{\sqrt{2}}\cdot(e^{i\pi/4},-e^{i\pi/4})$ &  $t\in[0,\frac{\pi}{2}]$ & $B$ \\
 \hline
 $\widetilde{e_+}(t)$ & $t\mapsto (e^{it},0)$ & $t\in[0,\frac{\pi}{n}]$ & $A$ \\
 \hline
\end{tabular}
\endgroup
 \caption{Lifts of Dihedral Reeb orbits to $S^3$}
 \label{table: lifts of dihedral reeb orbits}
\end{table}

The homotopy classes of $e_-$, $h$, and $e_+$ and their iterates are determined by this data and given in Table \ref{table: Dihedral homotopy classes of Reeb orbits}. We now prove Lemma \ref{lemma: dihedral cobordisms}, the dihedral case of Proposition \ref{proposition: CZ and action}.
\begin{table}[h!]
 \begin{tabular}{||c | c | c ||} 
 \hline
Free homotopy/conjugacy class & Represented orbits ($n$ even) &Represented orbits ($n$ odd) \\ [0.5ex] 
 \hline\hline
 $[\text{Id}]$ & $e_-^{4k},\,h^{4k},\,e_+^{2nk}$ & $e_-^{4k},\,h^{4k},\,e_+^{2nk}$ \\ 
 \hline
 $[-\text{Id}]$ &  $e_-^{2+4k},\,h^{2+4k},\,e_+^{n+2nk}$ & $e_-^{2+4k},\,h^{2+4k},\,e_+^{n+2nk}$ \\
 \hline
 $[A^m]$, for $0<m<n$ & $e_+^{m+2nk},\, e_+^{2n-m+2nk}$ & $e_+^{m+2nk},\, e_+^{2n-m+2nk}$ \\
 \hline
  $[B]$& $h^{1+4k},\,h^{3+4k}$ & $h^{1+4k},\,e_-^{3+4k}$ \\
 \hline
   $[AB]$ & $e_-^{1+4k},\,e_-^{3+4k}$ & $e_-^{1+4k},\,h^{3+4k}$ \\
 \hline
\end{tabular}
 \caption{Dihedral homotopy classes of Reeb orbits}
 \label{table: Dihedral homotopy classes of Reeb orbits}
\end{table}

\begin{lemma}\label{lemma: dihedral cobordisms}
Suppose $[\gamma_+]=[\gamma_-]\in[S^1,S^3/\D^*_{2n}]$ for $\gamma_+\in\mathcal{P}^{L_N}(\lambda_N)$ and $\gamma_-\in\mathcal{P}^{L_M}(\lambda_M)$.
\begin{enumerate}[\em (a)]
\itemsep-.35em
    \item If $\mu_{\CZ}(\gamma_+)=\mu_{\CZ}(\gamma_-)$ then $\gamma_+\sim\gamma_-$.
    \item If $\mu_{\CZ}(\gamma_+)<\mu_{\CZ}(\gamma_-)$ then $\mathcal{A}(\gamma_+)<\mathcal{A}(\gamma_-)$.
\end{enumerate}
\end{lemma}
\begin{proof}
We first prove (a). Recall the Conley Zehnder index formulas \eqref{equation: CZ indices dihedral} from Section \ref{subsection: dihedral}: \begin{equation} \label{equation: CZ indices dihedral repeat}
    \mu_{\CZ}(e_-^k)=2\Bigl\lceil \frac{k}{2} \Bigr\rceil-1,\,\,\,\mu_{\CZ}(h^k)=k,\,\,\,\mu_{\CZ}(e_+^k)=2\Bigl\lfloor \frac{k}{n} \Bigr\rfloor+1.
\end{equation} By Table \ref{table: Dihedral homotopy classes of Reeb orbits}, there are five possible values of the class $[\gamma_{\pm}]$:

        \vspace{.25cm}

        \emph{Case 1:} $[\gamma_{\pm}]\cong[\text{Id}]$. Then $\{\gamma_{\pm}\}\subset\{e_-^{4k_1}, h^{4k_2}, e_+^{2nk_3}\,|\,k_i\in\N\}$. The Conley Zehnder index modulo 4 of $e_-^{4k_1}$, $h^{4k_2}$, or $e_+^{2nk_3}$ is $-1$, 0, or $1$, respectively. Thus, $\gamma_{\pm}$ must project to the same orbifold point. Now, because (1) $\gamma_{\pm}$ are both type $e_-$, type $h$, or type $e_+$, and (2) share a homotopy class, $\mu_{\CZ}(\gamma_+)=\mu_{\CZ}(\gamma_-)$ and \eqref{equation: CZ indices dihedral repeat} imply $m(\gamma_+)=m(\gamma_-)$, thus $\gamma_+\sim\gamma_-$.
        
        \vspace{.25cm}
        
        \emph{Case 2:} $[\gamma_{\pm}]\cong[-\text{Id}]$. Then $\{\gamma_{\pm}\}\subset\{e_-^{2+4k_1}, h^{2+4k_2}, e_+^{n+2nk_3}\,|\,k_i\in\Z_{\geq0}\}$. The Conley Zehnder index modulo 4 of $e_-^{4k_1}$, $h^{4k_2}$, or $e_+^{2nk_3}$ is $1$, 2, or $3$, respectively. By the reasoning as in the previous case, we obtain $\gamma_+\sim\gamma_-$.
        
        \vspace{.25cm}
        
        \emph{Case 3:} $[\gamma_{\pm}]\cong[A^m]$ for some $0<m<n$. If $\gamma_+\nsim\gamma_-$, then by Table \ref{table: Dihedral homotopy classes of Reeb orbits} and \eqref{equation: CZ indices dihedral repeat}, we must have  $k_1, k_2\in\Z$ such that $\mu_{\CZ}(e_+^{m+2nk_1})=\mu_{\CZ}(e_+^{2n-m+2nk_2})$. This equation becomes $\lfloor\frac{m}{n}\rfloor+\lceil\frac{m}{n}\rceil=2(1+k_2-k_1)$. By the bounds on $m$, the ratio $\frac{m}{n}$ is not an integer, and so the quantity on the left hand side is odd, which is impossible. Thus, we must have $\gamma_+\sim\gamma_-$.
        
        \vspace{.25cm}
        
        \emph{Case 4:} $[\gamma_{\pm}]\cong[B]$. If $n$ is even, then both $\gamma_{\pm}$ are iterates of $h$, and by \eqref{equation: CZ indices dihedral repeat}, these multiplicities agree, so $\gamma_+\sim\gamma_-$. If $n$ is odd and $\gamma_{\pm}$ project to the same orbifold point, then because their homotopy classes and Conley Zehnder indices agree, Table \ref{table: Dihedral homotopy classes of Reeb orbits} and \eqref{equation: CZ indices dihedral repeat} imply their multiplicities must agree, so $\gamma_+\sim\gamma_-$.  If they project to different orbifold points then  $1+4k_+=\mu_{\CZ}(h^{1+4k_+})=\mu_{\CZ}(e_-^{3+4k_-})=3+4k_-$, which would imply $1=3$ mod 4, impossible.
        
        \vspace{.25cm}
        
        \emph{Case 5:} $[\gamma_{\pm}]\cong[AB]$. If $n$ is even, then both $\gamma_{\pm}$ are iterates of $e_-$, and by \eqref{equation: CZ indices dihedral repeat}, their multiplicities agree, thus $\gamma_+\sim\gamma_-$. If $n$ is odd and $\gamma_{\pm}$ project to the same point, then $\gamma_+\sim\gamma_-$ for the same reasons as in Case 4. Thus $\gamma_+\nsim\gamma_-$ implies  $3+4k_+=\mu_{\CZ}(h^{3+4k_+})=\mu_{\CZ}(e_-^{1+4k_-})=1+4k_-$, impossible mod 4.

        \vspace{.25cm}

To prove (b), let $\mu_{\CZ}(\gamma_+)<\mu_{\CZ}(\gamma_-)$. Recall the action formulas \eqref{equation: action dihedral} from Section \ref{subsection: dihedral}: \begin{equation}\label{equation: action dihedral repeat}
        \mathcal{A}(e_-^k)=\frac{k\pi(1-\varepsilon)}{2},\,\,\,\mathcal{A}(h^k)=\frac{k\pi}{2},\,\,\,\mathcal{A}(e_+^k)=\frac{k\pi(1+\varepsilon)}{n}.
\end{equation} First, note that if $\gamma_{\pm}$ project to the same orbifold critical point,  then $m(\gamma_+)<m(\gamma_-)$, because the Conley Zehnder index as a function of multiplicity of the iterate is non-decreasing. This implies  $\mathcal{A}(\gamma_+)<\mathcal{A}(\gamma_-)$, because the action strictly increases as a function of the iterate. We now prove (b) for pairs of orbits $\gamma_{\pm}$ projecting to different orbifold critical points.

        \vspace{.25cm}
        
        \emph{Case 1:} $[\gamma_{\pm}]\cong[\text{Id}]$. Then $\{\gamma_{\pm}\}\subset\{e_-^{4k_1}, h^{4k_2}, e_+^{2nk_3}\,|\,k_i\in\N\}$. There are six  combinations of the value of $(\gamma_+,\gamma_-)$, whose index and action are compared using \eqref{equation: CZ indices dihedral repeat} and \eqref{equation: action dihedral repeat}:
        \begin{enumerate}[(i)]
        \itemsep-.35em
            \item $\gamma_+=e_-^{4k_+}$ and $\gamma_-=h^{4k_-}$. The inequality $4k_+-1=\mu_{\CZ}(\gamma_+)<\mu_{\CZ}(\gamma_-)=4k_-$ implies $k_+\leq k_-$. This verifies that $\mathcal{A}(\gamma_+)=2\pi k_+(1-\varepsilon_N)<2\pi k_-=\mathcal{A}(\gamma_-)$.
            \item $\gamma_+=h^{4k_+}$ and $\gamma_-=e_-^{4k_-}$. The inequality $4k_+=\mu_{\CZ}(\gamma_+)<\mu_{\CZ}(\gamma_-)=4k_--1$ implies $k_+< k_-$. This verifies that $\mathcal{A}(\gamma_+)=2\pi k_+<2\pi k_-(1-\varepsilon_M)=\mathcal{A}(\gamma_-)$.
            \item $\gamma_+=e_-^{4k_+}$ and $\gamma_-=e_+^{2nk_-}$. The inequality $4k_+-1=\mu_{\CZ}(\gamma_+)<\mu_{\CZ}(\gamma_-)=4k_-+1$ implies $k_+\leq k_-$. This verifies that $\mathcal{A}(\gamma_+)=2\pi k_+(1-\varepsilon_N)<2\pi k_-(1+\varepsilon_M)=\mathcal{A}(\gamma_-)$.
            \item $\gamma_+=e_+^{2nk_+}$ and $\gamma_-=e_-^{4k_-}$. The inequality $4k_++1=\mu_{\CZ}(\gamma_+)<\mu_{\CZ}(\gamma_-)=4k_--1$ implies $k_+< k_-$. This verifies that $\mathcal{A}(\gamma_+)=2\pi k_+(1+\varepsilon_N)<2\pi k_-(1-\varepsilon_M)=\mathcal{A}(\gamma_-)$.
            \item $\gamma_+=h^{4k_+}$ and $\gamma_-=e_+^{2nk_-}$. The inequality $4k_+=\mu_{\CZ}(\gamma_+)<\mu_{\CZ}(\gamma_-)=4k_-+1$ implies $k_+\leq k_-$. This verifies that $\mathcal{A}(\gamma_+)=2\pi k_+<2\pi k_-(1+\varepsilon_M)=\mathcal{A}(\gamma_-)$.
            \item $\gamma_+=e_+^{2nk_+}$ and $\gamma_-=h^{4k_-}$. The inequality $4k_++1=\mu_{\CZ}(\gamma_+)<\mu_{\CZ}(\gamma_-)=4k_-$ implies $k_+< k_-$. This verifies that $\mathcal{A}(\gamma_+)=2\pi k_+(1+\varepsilon_N)<2\pi k_-=\mathcal{A}(\gamma_-)$.
        \end{enumerate}
                
        \vspace{.25cm}
        
        \emph{Case 2:} $[\gamma_{\pm}]\cong[-\text{Id}]$. Then $\{\gamma_{\pm}\}\subset\{e_-^{2+4k_1}, h^{2+4k_2}, e_+^{n+2nk_3}\,|\,k_i\in\Z_{\geq0}\}$.  The possible values of $(\gamma_+,\gamma_-)$ and the  arguments for $\mathcal{A}(\gamma_+)<\mathcal{A}(\gamma_-)$ are identical to the above case.
                        
        \vspace{.25cm}
        
        \emph{Case 3:} $[\gamma_{\pm}]\cong[A^m]$ for $0<m<n$ and both $\gamma_{\pm}$ are iterates of $e_+$, so (b) holds.

        \vspace{.25cm}
        
        \emph{Case 4:} $[\gamma_{\pm}]\cong[B]$. If $n$ is even, then both $\gamma_{\pm}$ are iterates of $h$, and so (b) holds. Otherwise, $n$ is odd and the pair $(\gamma_+,\gamma_-)$ is either $(e_-^{3+4k_+},h^{1+4k_-})$ or $(h^{1+4k_+}, e_-^{3+4k_-})$. In the former case $\mu_{\CZ}(\gamma_+)<\mu_{\CZ}(\gamma_-)$ and \eqref{equation: CZ indices dihedral repeat} imply $k_+< k_-$, so, by \eqref{equation: action dihedral repeat},\\ $\mathcal{A}(\gamma_+)=\frac{(3+4k_+)\pi}{2}(1-\varepsilon_N)<\frac{(1+4k_-)\pi}{2}=\mathcal{A}(\gamma_-)$. In the latter case, $\mu_{\CZ}(\gamma_+)<\mu_{\CZ}(\gamma_-)$ and \eqref{equation: CZ indices dihedral repeat} imply $k_+\leq k_-$, thus $\mathcal{A}(\gamma_+)=\frac{(1+4k_+)\pi}{2}<\frac{(3+4k_-)\pi}{2}(1-\varepsilon_M)=\mathcal{A}(\gamma_-)$ by \eqref{equation: action dihedral repeat}. 

        \vspace{.25cm}
        
        \emph{Case 5:} $[\gamma_{\pm}]\cong[AB]$. If $n$ is even, then both $\gamma_{\pm}$ are iterates of $e_-$, and so $\mathcal{A}(\gamma_+)<\mathcal{A}(\gamma_-)$ holds.  Otherwise, $n$ is odd and the pair $(\gamma_+,\gamma_-)$ is either $(e_-^{1+4k_+},h^{3+4k_-})$ or $(h^{3+4k_+}, e_-^{1+4k_-})$. If the former holds, then $\mu_{\CZ}(\gamma_+)<\mu_{\CZ}(\gamma_-)$ and \eqref{equation: CZ indices dihedral repeat} imply $k_+\leq k_-$, and so, by \eqref{equation: action dihedral repeat},\\ $\mathcal{A}(\gamma_+)=\frac{(1+4k_+)\pi}{2}(1-\varepsilon_N)<\frac{(3+4k_-)\pi}{2}=\mathcal{A}(\gamma_-)$. If the latter holds, then $\mu_{\CZ}(\gamma_+)<\mu_{\CZ}(\gamma_-)$ and \eqref{equation: CZ indices dihedral repeat} imply $k_+< k_-$, and so $\mathcal{A}(\gamma_+)=\frac{(3+4k_+)\pi}{2}<\frac{(1+4k_-)\pi}{2}(1-\varepsilon_M)=\mathcal{A}(\gamma_-)$ by \eqref{equation: action dihedral repeat}. 
\end{proof}

\subsubsection{Binary polyhedral groups $\T^*$, $\Oc^*$, and $\I^*$}\label{subsubsection: binary polyhedral}
We will describe the homotopy classes of the Reeb orbits in $S^3/\P^*$ using a more geometric point of view than in the dihedral case. If a loop $\gamma$ in $S^3/\P^*$ and  $c\in\text{Conj}(G)$ satisfy $[\gamma]\cong c$, then the order of $\gamma$ in $\pi_1(S^3/\P^*)$ equals the \emph{group order} of $c$, defined to be the order of any $g\in\P^*$ representing $c$. If $\gamma$ has order $k$ in the fundamental group and $c\in\text{Conj}(\P^*)$ is the \emph{only} class with group order $k$, then we can immediately conclude that $[\gamma]\cong c$. Determining the free homotopy class of $\gamma$ via the group order is more difficult when there are multiple conjugacy classes of $\P^*$ with the same group order. 

Tables \ref{table: tetrahedral conjugacy}, \ref{table: octahedral conjugacy}, and \ref{table: icosahedral conjugacy} provide notation for the distinct conjugacy classes of $\T^*$, $\Oc^*$, and $\I^*$, along with their group orders. Our notation indicates when there exist multiple conjugacy classes featuring the same group order - the notation $P_{i,A}$ and $P_{i,B}$ provides labels for the two distinct conjugacy classes of $\P^*$ (for $P\in\{T,O,I\}$) of group order $i$. For $P\in\{T,O,I\}$, $P_{\text{Id}}$ and $P_{-\text{Id}}$ denote the singleton conjugacy classes $\{\text{Id}\}$ and $\{-\text{Id}\}$, respectively, and $P_i$ denotes the unique conjugacy class of group order $i$.

\begin{table}[h!]
\centering
    \begin{tabular}{||c ||c | c | c |c | c | c | c ||}
    \hline
       Conjugacy class  & $T_{\text{Id}}$ & $T_{-\text{Id}}$ & $T_4$ & $T_{6,A}$ & $T_{6,B}$ & $T_{3,A}$ & $T_{3,B}$ \\
    \hline
    Group order  & 1 & 2 & 4 & 6 & 6 & 3 & 3 \\
    \hline
    \end{tabular}
    \caption{The 7 conjugacy classes of $\T^*$}
    \label{table: tetrahedral conjugacy}
\end{table}
\begin{table}[h!]
\centering
    \begin{tabular}{||c ||c | c | c | c |c | c | c | c ||}
    \hline
       Conjugacy class  & $O_{\text{Id}}$ & $O_{-\text{Id}}$ & $O_{8,A}$ & $O_{8,B}$ & $O_{4,A}$ & $O_{4,B}$ & $O_{6}$ & $O_3$ \\
    \hline
    Group order  & 1 & 2 & 8 & 8 & 4 & 4 & 6 & 3 \\
    \hline
    \end{tabular}
        \caption{The 8 conjugacy classes of $\Oc^*$}
    \label{table: octahedral conjugacy}
\end{table}
\begin{table}[h!]
\centering
    \begin{tabular}{||c ||c | c | c | c |c | c | c | c | c ||}
    \hline
       Conjugacy class  & $I_{\text{Id}}$ & $I_{-\text{Id}}$ & $I_{10,A}$ & $I_{10,B}$ & $I_{5,A}$ & $I_{5,B}$ & $I_{4}$ & $I_6$  & $I_3$\\
    \hline
    Group order  & 1 & 2 & 10 & 10 & 5 & 5 & 4 & 6 & 3 \\
    \hline
    \end{tabular}
        \caption{The 9 conjugacy classes of $\I^*$}
    \label{table: icosahedral conjugacy}
\end{table}

Lemmas \ref{remark: identify}, \ref{remark: distinguish 1}, and \ref{remark: distinguish 2} allow us to record the homotopy classes represented by any iterate of $\mathcal{V}$, $\mathcal{E}$, or $\mathcal{F}$ in Tables \ref{table: Tetrahedral homotopy classes of Reeb orbits}, \ref{table: octahedral homotopy classes of Reeb orbits}, and \ref{table: icosahedral homotopy classes of Reeb orbits}. We explain how the table is set up in the tetrahedral case: it must be true that $[\mathcal{V}]\neq[\mathcal{F}]$, otherwise taking the 2-fold iterate would violate Lemma \ref{remark: distinguish 1}(i) so without loss of generality write $[\mathcal{V}]\cong T_{6,A}$ and $[\mathcal{F}]\cong T_{6,B}$, and similarly $[\mathcal{V}^2]\cong T_{3,A}$ and $[\mathcal{F}^2]\cong T_{3,B}$. By Lemma \ref{remark: identify}(i), we must have that $[\mathcal{F}^5]\cong T_{6,A}$,  $[\mathcal{F}^4]\cong T_{3,A}$. Taking the 4-fold iterate of $[\mathcal{V}]=[\mathcal{F}^5]$ provides $[\mathcal{V}^4]=[\mathcal{F}^2]\cong T_{3,B}$, and the 5-fold iterate provides $[\mathcal{V}^5]=[\mathcal{F}^1]\cong T_{6,B}$, and we have resolved all ambiguity regarding the tetrahedral classes of group orders 6 and 3. Analogous arguments apply in the octahedral and icosahedral cases.

\begin{table}
\centering
\makebox[0pt][c]{\parbox{1.2\textwidth}{%
    \begin{minipage}[b]{0.32\hsize}\centering
        \begin{tabular}{||c | c ||} 
 \hline
Class & Represented orbits  \\ [0.5ex] 
 \hline\hline
 $T_{\text{Id}}$ & $\mathcal{V}^{6k},\,\mathcal{E}^{4k},\,\mathcal{F}^{6k}$ \\ 
 \hline
 $T_{-\text{Id}}$ &  $\mathcal{V}^{3+6k},\,\mathcal{E}^{2+4k},\,\mathcal{F}^{3+6k}$\\
 \hline
 $T_{4}$ &  $\mathcal{E}^{1+4k},\,\mathcal{E}^{3+4k}$\\
 \hline
  $T_{6,A}$& $\mathcal{V}^{1+6k},\,\mathcal{F}^{5+6k}$\\
 \hline
   $T_{6,B}$& $\mathcal{F}^{1+6k},\,\mathcal{V}^{5+6k}$\\
 \hline
   $T_{3,A}$& $\mathcal{V}^{2+6k},\,\mathcal{F}^{4+6k}$\\
 \hline
    $T_{3,B}$& $\mathcal{F}^{2+6k},\,\mathcal{V}^{4+6k}$\\
 \hline
\end{tabular}
 \caption{$S^3/\T^*$ Reeb Orbits}
 \label{table: Tetrahedral homotopy classes of Reeb orbits}
    \end{minipage}
    \hfill
    \begin{minipage}[b]{0.32\hsize}\centering
         \begin{tabular}{||c | c ||} 
 \hline
Class & Represented orbits  \\ [0.5ex] 
 \hline\hline
 $O_{\text{Id}}$ & $\mathcal{V}^{8k},\,\mathcal{E}^{4k},\,\mathcal{F}^{6k}$ \\ 
 \hline
 $O_{-\text{Id}}$ &  $\mathcal{V}^{4+8k},\,\mathcal{E}^{2+4k},\,\mathcal{F}^{3+6k}$\\
 \hline
 $O_{8,A}$ &  $\mathcal{V}^{1+8k},\,\mathcal{V}^{7+8k}$\\
 \hline
  $O_{8,B}$& $\mathcal{V}^{3+8k},\,\mathcal{V}^{5+8k}$\\
 \hline
  $O_{4,A}$ &  $\mathcal{V}^{2+8k},\,\mathcal{V}^{6+8k}$\\
 \hline
  $O_{4,B}$& $\mathcal{E}^{1+4k},\,\mathcal{E}^{3+4k}$\\
 \hline
   $O_{6}$& $\mathcal{F}^{1+6k},\,\mathcal{F}^{5+6k}$\\
 \hline
   $O_{3}$& $\mathcal{F}^{2+6k},\,\mathcal{F}^{4+6k}$\\
 \hline
\end{tabular}
\caption{$S^3/\Oc^*$ Reeb Orbits}
\label{table: octahedral homotopy classes of Reeb orbits}
    \end{minipage}
    \hfill
    \begin{minipage}[b]{0.32\hsize}\centering
        \begin{tabular}{||c | c ||} 
 \hline
Class & Represented orbits  \\ [0.5ex] 
 \hline\hline
 $I_{\text{Id}}$ & $\mathcal{V}^{10k},\,\mathcal{E}^{4k},\,\mathcal{F}^{6k}$ \\ 
 \hline
 $I_{-\text{Id}}$ &  $\mathcal{V}^{5+10k},\,\mathcal{E}^{2+4k},\,\mathcal{F}^{3+6k}$\\
 \hline
 $I_{10,A}$ &  $\mathcal{V}^{1+10k},\,\mathcal{V}^{9+10k}$\\
 \hline
  $I_{10,B}$& $\mathcal{V}^{3+10k},\,\mathcal{V}^{7+10k}$\\
 \hline
  $I_{5,A}$ &  $\mathcal{V}^{2+10k},\,\mathcal{V}^{8+10k}$\\
 \hline
  $I_{5,B}$& $\mathcal{V}^{4+10k},\,\mathcal{V}^{6+10k}$\\
 \hline
   $I_{4}$& $\mathcal{E}^{1+4k},\,\mathcal{E}^{3+4k}$\\
 \hline
   $I_{6}$& $\mathcal{F}^{1+6k},\,\mathcal{F}^{5+6k}$\\
 \hline
    $I_{3}$& $\mathcal{F}^{2+6k},\,\mathcal{F}^{4+6k}$\\
 \hline
\end{tabular}
\caption{$S^3/\I^*$ Reeb Orbits}
\label{table: icosahedral homotopy classes of Reeb orbits}
    \end{minipage}%
}}
\end{table}


\vfill
\eject

\begin{lemma} \label{remark: identify}
We have the following identifications between free homotopy classes represented by orbits: 
 \begin{enumerate}[\em (i)]
 \itemsep-.35em
     \item $[\mathcal{V}]=[\mathcal{F}^5]$, for $\P^*=\T^*$,
     \item $[\mathcal{V}]=[\mathcal{V}^7]$, for $\P^*=\Oc^*$,
     \item $[\mathcal{V}]=[\mathcal{V}^9]$, for $\P^*=\I^*$.
 \end{enumerate}
 \end{lemma}
 
 \begin{proof}
 Suppose $p_1$, $p_2\in \text{Fix}(\P)\subset S^2$ are antipodal, i.e. $p_1=-p_2$. Select $z_i\in \fP^{-1}(p_i)\subset S^3$. The Hopf fibration $\fP$ has the property that $\fP(v_1)=-\fP(v_2)$ in $S^2$ if and only if $v_1$ and $v_2$ are orthogonal vectors in $\C^2$, so $z_1$ and $z_2$ must be orthogonal. Now, $p_i$ is either of vertex, edge, or face type, so let $\gamma_i$ denote the orbit $\mathcal{V}$, $\mathcal{E}$, or $\mathcal{F}$, depending on this type of $p_i$. Because $p_1$ and $p_2$ are antipodal, the order of $\gamma_1$ equals that of $\gamma_2$ in $\pi_1(S^3/\P^*)$, call this order $d$, and let $T:=\frac{2\pi}{d}$. Now, consider that the map \[\Gamma_1:[0,T]\to S^3,\,\,\, t\mapsto e^{it}\cdot z_1\] is a lift of $\gamma_1$ to $S^3$. Thus,  $z_1$ is an eigenvector with eigenvalue $e^{iT}$ for some $g\in\P^*$, and $[\gamma_1]\cong[g]$ because $g\cdot\Gamma_1(0)=\Gamma_1(T)$. Because $g$ is special unitary, we must also have that $z_2$ is an eigenvector of $g$ with eigenvalue $e^{i(2\pi-T)}=e^{i(d-1)T}$. Now the map \[\Gamma_2^{d-1}:[0,(d-1)T]\to S^3,\,\,\,t\mapsto e^{it}\cdot z_2\] is a lift of $\gamma_2^{d-1}$ to $S^3$. This provides $g\cdot\Gamma_2^{d-1}(0)=\Gamma_2^{d-1}((d-1)T)$ which implies $[\gamma_1]=[\gamma_2^{d-1}]$, as both are identified with $[g]$, and thereby establishing the above identifications.
 \end{proof}

\begin{lemma}\label{remark: distinguish 1}
The following identifications fail to hold between free homotopy classes represented by orbits: 
 
 \begin{enumerate}[\em (i)]
 \itemsep-.35em
     \item $[\mathcal{V}^2]\neq[\mathcal{F}^2]$, for $\P^*=\T^*$,
     \item $[\mathcal{E}]\neq[\mathcal{V}^2]$, for $\P^*=\Oc^*$
 \end{enumerate}
\end{lemma}

 \begin{proof}

 Suppose, for $i=1,2$, $\gamma_i$ is one of $\mathcal{V}$, $\mathcal{E}$, or $\mathcal{F}$, and suppose $\gamma_1\neq\gamma_2$. Let $d_i$ be the order of $\gamma_i$ in $\pi_1(S^3/\P^*)$, and select $k_i\in\N$ for $i=1,2$. If $\frac{2\pi k_1}{d_1}\equiv \frac{2\pi k_2}{d_2}$ modulo $2\pi\Z$ and if $\pi\nmid\frac{2\pi k_1}{d_1}$, then $[\gamma_1^{k_1}]\neq[\gamma_2^{k_2}]$. To prove this, consider that we have $g_i\in\P^*$ with eigenvector $z_i$ in $S^3$, with eigenvalue $\lambda:=e^{2\pi k_1 /d_1}=e^{2\pi k_2 i/d_2}$ so that $[\gamma_i^{k_i}]\cong[g_i]$. Note that  $\gamma_1\neq\gamma_2$ implies $\fP(z_1)$ is not in the same $\P$-orbit as $\fP(z_2)$ in $S^2$, i.e. $\pi_{\P}(\fP(z_1))\neq\pi_{\P}(\fP(z_2))$. Now, $[\gamma_1^{k_1}]=[\gamma_2^{k_2}]$ would imply $g_1=x^{-1}g_2x$, for some $x\in\P^*$, ensuring that $x\cdot z_1$ is a $\lambda$ eigenvector of $g_2$. Because $\lambda\neq\pm1$, we know that the $\lambda$-eigenspace of $g_2$ is complex 1-dimensional, so we must have that $x\cdot z_1$ and $z_2$ are co-linear. That is, $x\cdot z_1=\alpha z_2$ for some $\alpha\in S^1$, which implies that \[\pi_{\P}(\fP(z_1))=\pi_{\P}(\fP(x\cdot z_1))=\pi_{\P}(\fP(\alpha\cdot z_2))=\pi_{\P}(\fP(z_2)),\] a contradiction. This yields the desired inequivalences. 

 \end{proof}

\begin{lemma}\label{remark: distinguish 2}
The following identifications fail to hold between free homotopy classes represented by iterates of the same orbits: 

  \begin{enumerate}[\em (i)]
 \itemsep-.35em
     \item $[\mathcal{V}^1]\neq[\mathcal{V}^3]$, for $\P^*=\Oc^*$,
     \item $[\mathcal{V}^2]\neq[\mathcal{V}^4]$, for $\P^*=\I^*$.
 \end{enumerate}
\end{lemma}

 \begin{proof}
 
 For $i=1,2$, select $k_i\in\N$ and let $\gamma_i$ denote one of $\mathcal{V}$, $\mathcal{E}$, or $\mathcal{F}$. Let $d_i$ denote the order of $\gamma_i$ in $\pi_1(S^3/\P^*)$. Suppose that $\frac{2\pi k_i}{d_i}$ is not a multiple of $2\pi$, and $\frac{2\pi k_1}{d_1}\ncong\frac{2\pi k_2}{d_2}$ mod $2\pi\Z$. If $\frac{2\pi k_1}{d_1}+\frac{2\pi k_2}{d_2}$ is not a multiple of $2\pi$, then $[\gamma_1^{k_1}]\neq[\gamma_2^{k_2}]$. To prove this, consider that we have $g_i\in\P^*$ with $[\gamma_i^{k_i}]\cong[g_i]$. This tells us that $e^{2\pi k_j i/d_j}$ is an eigenvalue of $g_j$. If it were the case that $[\gamma_1^{k_1}]=[\gamma_2^{k_2}]$ in $[S^1,S^3/\P^*]$, then we would have $[g_1]=[g_2]$ in $\text{Conj}(G)$. Because conjugate elements share eigenvalues, we would have \[\text{Spec}(g_1)=\text{Spec}(g_2)=\{e^{2\pi k_1 i/d_1},e^{2\pi k_2 i/d_2}\}.\]  However, the product of these eigenvalues is not 1, contradicting that $g_i\in\SU(2)$. 
 \end{proof}

We are ready to prove Proposition \ref{lemma: polyhedral cobordisms}, which is the polyhedral case of Proposition \ref{proposition: CZ and action}. Lemma \ref{remark: same underlying embedded orbits} will streamline some casework.

\begin{lemma}\label{remark: same underlying embedded orbits}
If $\gamma_+\in\mathcal{P}^{L_N}(\lambda_N)$ and $\gamma_-\in\mathcal{P}^{L_M}(\lambda_M)$ project to the same orbifold critical point and are in the same free homotopy class, then 
\begin{enumerate}[(i)]
\itemsep-.35em
    \item $\mu_{\CZ}(\gamma_+)=\mu_{\CZ}(\gamma_-)$ implies $m(\gamma_+)=m(\gamma_-)$, i.e., $\gamma_{+}\sim\gamma_-$;
    \item  $\mu_{\CZ}(\gamma_+)<\mu_{\CZ}(\gamma_-)$ implies $m(\gamma_+)<m(\gamma_-)$, and so  $\mathcal{A}(\gamma_+)<\mathcal{A}(\gamma_-)$.
\end{enumerate}
\end{lemma}
 \begin{proof}
For $\P^*=\T^*$, $\Oc^*$, or $\I^*$, fix $c\in\text{Conj}(\P^*)$ and let $\gamma$ denote one of $\mathcal{V}$, $\mathcal{E}$, or $\mathcal{F}$. Define $S_{\gamma, c}:=\{\gamma^k\,|\,[\gamma^k]\cong c\}$; note that this set may potentially be empty. Observe that the map $S_{\gamma, c}\to\Z$, $\gamma^k\mapsto\mu_{\CZ}(\gamma^k)$, is injective. Thus the result holds.
 \end{proof}

\begin{proposition}\label{lemma: polyhedral cobordisms}
Suppose $[\gamma_+]=[\gamma_-]\in[S^1,S^3/\P^*]$ for $\gamma_+\in\mathcal{P}^{L_N}(\lambda_N)$ and $\gamma_-\in\mathcal{P}^{L_M}(\lambda_M)$.
\begin{enumerate}[\em (a)]
\itemsep-.35em
    \item If $\mu_{\CZ}(\gamma_+)=\mu_{\CZ}(\gamma_-)$, then $\gamma_+\sim\gamma_-$.
    \item If $\mu_{\CZ}(\gamma_+)<\mu_{\CZ}(\gamma_-)$, then $\mathcal{A}(\gamma_+)<\mathcal{A}(\gamma_-)$.
\end{enumerate}
\end{proposition}

\begin{proof}

We first prove (a). Recall the Conley Zehnder index formulas \eqref{equation: CZ indices polyhedral} from Section \ref{subsection: polyhedral}: \begin{equation} \label{equation: CZ indices polyhedral repeat}
    \mu_{\CZ}(\mathcal{V}^k)=2\Bigl\lceil \frac{k}{\mathscr{I}_{\mathscr{V}}} \Bigr\rceil-1,\,\,\,\mu_{\CZ}(\mathcal{E}^k)=k,\,\,\,\mu_{\CZ}(\mathcal{F}^k)=2\Bigl\lfloor \frac{k}{3} \Bigr\rfloor+1,
\end{equation}Consider the following possible values of the homotopy class $[\gamma_{\pm}]$:
        
        \vspace{.25cm}
        
        \emph{Case 1:} $[\gamma_{\pm}]\cong T_{\text{Id}}$, $O_{\text{Id}}$, or $I_{\text{Id}}$, so that $\{\gamma_{\pm}\}\subset\{\mathcal{V}^{2\mathscr{I}_{\mathscr{V}}k_1}, \mathcal{E}^{4k_2}, \mathcal{F}^{6k_3}\,|\,k_i\in\N\}$. By reasoning identical to the analogous case of $\D_{2n}^*$ (Lemma \ref{lemma: dihedral cobordisms}(a), Case 1), $\gamma_+\sim\gamma_-$.         
        \vspace{.25cm}
        
        \emph{Case 2:} $[\gamma_{\pm}]\cong T_{-\text{Id}}$, $O_{-\text{Id}}$, or $I_{-\text{Id}}$, so that $\{\gamma_{\pm}\}\subset\{\mathcal{V}^{\mathscr{I}_{\mathscr{V}}+2\mathscr{I}_{\mathscr{V}}k_1}, \mathcal{E}^{2+4k_2}, \mathcal{F}^{3+6k_3}\,|\,k_i\in\Z_{\geq0}\}$. Again, as in the the analogous case of $\D_{2n}^*$ (Lemma \ref{lemma: dihedral cobordisms}(a), Case 2), $\gamma_+\sim\gamma_-$.        
        \vspace{.25cm}

        \emph{Case 3:} $[\gamma_{\pm}]\cong T_{6,A}, T_{6,B}, T_{3,A}$, or $T_{3,B}$. If both $\gamma_{\pm}$ are iterates of $\mathcal{V}$, then by Lemma \ref{remark: same underlying embedded orbits}(i), they must share the same multiplicity, i.e. $\gamma_+\sim\gamma_-$. If both $\gamma_{\pm}$ are iterates of $\mathcal{F}$ then again by Lemma \ref{remark: same underlying embedded orbits}(i), they must share the same multiplicity, i.e. $\gamma_+\sim\gamma_-$. So we must argue, using \eqref{equation: CZ indices polyhedral repeat}, that in each of these four free homotopy classes that it is impossible that $\gamma_{\pm}$ project to different orbifold points whenever $\mu_{\CZ}(\gamma_+)=\mu_{\CZ}(\gamma_-)$.
        \begin{itemize}
        \itemsep-.35em
            \item If $[\gamma_{\pm}]\cong T_{6,A}$ and $\gamma_{\pm}$ project to different orbifold points then, up to relabeling,\\ $\gamma_+=\mathcal{V}^{1+6k_+}$ and $\gamma_-=\mathcal{F}^{5+6k_-}$. Now, $\mu_{\CZ}(\gamma_+)=4k_++1\neq 4k_-+3=\mu_{\CZ}(\gamma_-)$.
            \item If $[\gamma_{\pm}]\cong T_{6,B}$ and $\gamma_{\pm}$ project to different orbifold points, write $\gamma_+=\mathcal{V}^{5+6k_+}$\\ and $\gamma_-=\mathcal{F}^{1+6k_-}$. Now, $\mu_{\CZ}(\gamma_+)=4k_++3\neq 4k_-+1=\mu_{\CZ}(\gamma_-)$.
            \item If $[\gamma_{\pm}]\cong T_{3,A}$ and $\gamma_{\pm}$ project to different orbifold points, write $\gamma_+=\mathcal{V}^{2+6k_+}$\\ and $\gamma_-=\mathcal{F}^{4+6k_-}$. Now, $\mu_{\CZ}(\gamma_+)=4k_++1\neq 4k_-+3=\mu_{\CZ}(\gamma_-)$.
            \item If $[\gamma_{\pm}]\cong T_{3,B}$ and $\gamma_{\pm}$ project to different orbifold points, write $\gamma_+=\mathcal{V}^{4+6k_+}$\\ and $\gamma_-=\mathcal{F}^{2+6k_-}$. Now, $\mu_{\CZ}(\gamma_+)=4k_++3\neq 4k_-+1=\mu_{\CZ}(\gamma_-)$.
        \end{itemize}
                
        \vspace{.25cm}
        
        \emph{Case 4:} $[\gamma_{\pm}]$ is a homotopy class not covered in Cases 1 - 3. Because every such homotopy class is represented by Reeb orbits either of type $\mathcal{V}$, of type $\mathcal{E}$, or of type $\mathcal{F}$, then we see that $\gamma_{\pm}$ project to the same orbifold point of $S^2/\P$. By Lemma \ref{remark: same underlying embedded orbits}(i), we have that $\gamma_{+}\sim\gamma_-$.
        
        \vspace{.25cm}
        
        We now prove (b). Recall the action formulas \eqref{equation: action polyhedral} from Section \ref{subsection: polyhedral}: \begin{equation}\label{equation: action polyhedral repeat}
        \mathcal{A}(\mathcal{V}^k)=\frac{k\pi(1-\varepsilon)}{\mathscr{I}_{\mathscr{V}}},\,\,\,\mathcal{A}(\mathcal{E}^k)=\frac{k\pi}{2},\,\,\,\mathcal{A}(\mathcal{F}^k)=\frac{k\pi(1+\varepsilon)}{3}.
\end{equation} Consider the following possible values of the homotopy class $[\gamma_{\pm}]$:
        \vspace{.25cm}
        
        \emph{Case 1:} $[\gamma_{\pm}]\cong T_{\text{Id}}$, $O_{\text{Id}}$, or $I_{\text{Id}}$, so that $\{\gamma_{\pm}\}\subset\{\mathcal{V}^{2\mathscr{I}_{\mathscr{V}}k_1}, \mathcal{E}^{4k_2}, \mathcal{F}^{6k_3}\,|\,k_i\in\N\}$. By reasoning identical to the analogous case of $\D_{2n}^*$ (Lemma \ref{lemma: dihedral cobordisms}(b), Case 1), $\mathcal{A}(\gamma_+)<\mathcal{A}(\gamma_-)$.         
        \vspace{.25cm}
        
        \emph{Case 2:} $[\gamma_{\pm}]\cong T_{-\text{Id}}$, $O_{-\text{Id}}$, or $I_{-\text{Id}}$, so that $\{\gamma_{\pm}\}\subset\{\mathcal{V}^{\mathscr{I}_{\mathscr{V}}+2\mathscr{I}_{\mathscr{V}}k_1}, \mathcal{E}^{2+4k_2}, \mathcal{F}^{3+6k_3}\,|\,k_i\in\Z_{\geq0}\}$. Again, as in the the analogous case of $\D_{2n}^*$ (Lemma \ref{lemma: dihedral cobordisms}(b), Case 2), $\mathcal{A}(\gamma_+)<\mathcal{A}(\gamma_-)$.   
        
        \vspace{.25cm}

        \emph{Case 3:} $[\gamma_{\pm}]\cong T_{6,A}, T_{6,B}, T_{3,A}$, or $T_{3,B}$. If both $\gamma_{\pm}$ are of type $\mathcal{V}$, then by Lemma \ref{remark: same underlying embedded orbits}(ii), $\mathcal{A}(\gamma_+)<\mathcal{A}(\gamma_-)$. If both $\gamma_{\pm}$ are of type $\mathcal{F}$, then again by Lemma \ref{remark: same underlying embedded orbits}(ii), $\mathcal{A}(\gamma_+)<\mathcal{A}(\gamma_-)$. So we must argue, using \eqref{equation: CZ indices polyhedral repeat} and \eqref{equation: action polyhedral repeat}, that for each of these four free homotopy classes that if $\gamma_+$ and $\gamma_-$ are not of the same type, and if $\mu_{\CZ}(\gamma_+)<\mu_{\CZ}(\gamma_-)$, then $\mathcal{A}(\gamma_+)<\mathcal{A}(\gamma_-)$.

        \emph{3.A} If $[\gamma_{\pm}]\cong T_{6,A}$, then the pair $(\gamma_+,\gamma_-)$ is either $(\mathcal{V}^{1+6k_+}, \mathcal{F}^{5+6k_-})$ or $(\mathcal{F}^{5+6k_+}, \mathcal{V}^{1+6k_-})$. If the former holds, then $\mu_{\CZ}(\gamma_+)<\mu_{\CZ}(\gamma_-)$ implies $k_+\leq k_-$, and so \[\mathcal{A}(\gamma_+)=\frac{(1+6k_+)\pi}{3}(1-\varepsilon_N)<\frac{(5+6k_-)\pi}{3}(1+\varepsilon_M)=\mathcal{A}(\gamma_-).\] If the latter holds, then $\mu_{\CZ}(\gamma_+)<\mu_{\CZ}(\gamma_-)$ implies $k_+<k_-$, and again the action satisfies \[\mathcal{A}(\gamma_+)=\frac{(5+6k_+)\pi}{3}(1+\varepsilon_N)<\frac{(1+6k_-)\pi}{3}(1-\varepsilon_M)=\mathcal{A}(\gamma_-).\] 
        
         \emph{3.B} If $[\gamma_{\pm}]\cong T_{6,B}$, then the pair $(\gamma_+,\gamma_-)$ is either $(\mathcal{V}^{5+6k_+}, \mathcal{F}^{1+6k_-})$ or $(\mathcal{F}^{1+6k_+}, \mathcal{V}^{5+6k_-})$. If the former holds, then  $\mu_{\CZ}(\gamma_+)<\mu_{\CZ}(\gamma_-)$ implies $k_+<k_-$, and so \[\mathcal{A}(\gamma_+)=\frac{(5+6k_+)\pi}{3}(1-\varepsilon_N)<\frac{(1+6k_-)\pi}{3}(1+\varepsilon_M)=\mathcal{A}(\gamma_-).\] If the latter holds, then $\mu_{\CZ}(\gamma_+)<\mu_{\CZ}(\gamma_-)$ implies $k_+\leq k_-$, and so \[\mathcal{A}(\gamma_+)=\frac{(1+6k_+)\pi}{3}(1+\varepsilon_N)<\frac{(5+6k_-)\pi}{3}(1-\varepsilon_M)=\mathcal{A}(\gamma_-).\] 
        
          \emph{3.C}  If $[\gamma_{\pm}]\cong T_{3,A}$, then the pair $(\gamma_+,\gamma_-)$ is either $(\mathcal{V}^{2+6k_+}, \mathcal{F}^{4+6k_-})$ or $(\mathcal{F}^{4+6k_+}, \mathcal{V}^{2+6k_-})$. If the former holds, then $\mu_{\CZ}(\gamma_+)<\mu_{\CZ}(\gamma_-)$ implies $k_+\leq k_-$, and so \[\mathcal{A}(\gamma_+)=\frac{(2+6k_+)\pi}{3}(1-\varepsilon_N)<\frac{(4+6k_-)\pi}{3}(1+\varepsilon_M)=\mathcal{A}(\gamma_-).\] If the latter holds, then $\mu_{\CZ}(\gamma_+)<\mu_{\CZ}(\gamma_-)$ implies $k_+<k_-$, and so \[\mathcal{A}(\gamma_+)=\frac{(4+6k_+)\pi}{3}(1+\varepsilon_N)<\frac{(2+6k_-)\pi}{3}(1-\varepsilon_M)=\mathcal{A}(\gamma_-).\] 
        
        \emph{3.D} If $[\gamma_{\pm}]\cong T_{3,B}$, then the pair $(\gamma_+,\gamma_-)$ is either $(\mathcal{V}^{4+6k_+}, \mathcal{F}^{2+6k_-})$ or $(\mathcal{F}^{2+6k_+}, \mathcal{V}^{4+6k_-})$. If the former holds, then $\mu_{\CZ}(\gamma_+)<\mu_{\CZ}(\gamma_-)$ implies $k_+<k_-$, and so \[\mathcal{A}(\gamma_+)=\frac{(4+6k_+)\pi}{3}(1-\varepsilon_N)<\frac{(2+6k_-)\pi}{3}(1+\varepsilon_M)=\mathcal{A}(\gamma_-).\] If the latter holds, then $\mu_{\CZ}(\gamma_+)<\mu_{\CZ}(\gamma_-)$ implies $k_+\leq k_-$, and so \[\mathcal{A}(\gamma_+)=\frac{(2+6k_+)\pi}{3}(1+\varepsilon_N)<\frac{(4+6k_-)\pi}{3}(1-\varepsilon_M)=\mathcal{A}(\gamma_-).\] 
                
        \vspace{.25cm}
        
        \emph{Case 4:} $[\gamma_{\pm}]$ is a homotopy class not covered in Cases 1 - 3. Because every such homotopy class is represented by Reeb orbits either of type $\mathcal{V}$, of type $\mathcal{E}$, or of type $\mathcal{F}$, then we see that $\gamma_{\pm}$ project to the same orbifold point of $S^2/\P$. By Lemma \ref{remark: same underlying embedded orbits}(ii),  $\mathcal{A}(\gamma_{+})<\mathcal{A}(\gamma_{-})$.
      
\end{proof}




\addcontentsline{toc}{section}{References}

\noindent \textsc{Leo Digiosia \\  Rice University, PhD 2022 \\ Quantitative Model Development Analyst at US Bank}\\
{\em email: }\texttt{digiosialeo@gmail.com}\\

\noindent \textsc{Jo Nelson \\  Rice University}\\
{\em email: }\texttt{jo.nelson@rice.edu}\\

\end{document}